\theoremstyle{plain}% default
 \newtheorem{thm}{Theorem}[section]
 \newtheorem{lem}[thm]{Lemma}
 \newtheorem{prop}[thm]{Proposition}
\theoremstyle{definition}
\theoremstyle{remark}
\newcommand{\interior}{\operatorname{int}}
\newcommand{\dist}{\operatorname{dist}}
\newcommand{\C}{\mathbb{C}}
\newcommand{\D}{\mathbb{D}}
\newcommand{\R}{\mathbb{R}}
\newcommand{\T}{\mathbb{T}}
\newcommand{\Z}{\mathbb{Z}}
\newcommand{\XX}{\mathcal{X}}
\newcommand{\eps}{\varepsilon}
\newcommand{\mR}{\mathbb{R}}
\newcommand{\figref}[1]{Figure~\ref{fig:#1}}
\newcommand{\secref}[1]{Section~\ref{sec:#1}}
\newcommand{\appref}[1]{Appendix~\ref{app:#1}}
\newcommand{\lemref}[1]{Lemma~\ref{lem:#1}}
\newcommand{\thmref}[1]{Theorem~\ref{thm:#1}}
\renewcommand{\eqref}[1]{(\ref{eq:#1})}
\begin{document}

\title{Extreme value laws in dynamical systems under physical observables}

\author{Mark P. Holland}
\author{Renato Vitolo}
\email{r.vitolo@exeter.ac.uk}
\affiliation{
College of Engineering, Mathematics and Physical Sciences, University of
Exeter,
Exeter, UK
}%
\author{Pau Rabassa}
\affiliation{
Johann  Bernoulli Institute for the Mathematical Sciences,
Rijksuniversiteit Groningen,
Groningen,
The Netherlands
}%
\author{Alef E. Sterk}
\affiliation{
College of Engineering, Mathematics and Physical Sciences, University of
Exeter,
Exeter, UK
}%
\author{Henk Broer}
\affiliation{
Johann  Bernoulli Institute for the Mathematical Sciences,
Rijksuniversiteit Groningen,
Groningen,
The Netherlands
}%

\begin{abstract}
  Extreme value theory for chaotic deterministic dynamical systems is a
  rapidly expanding area of research. Given a system and a real function
  (observable) defined on its phase space, extreme value theory studies the
  limit probabilistic laws obeyed by large values attained by the observable
  along orbits of the system. Based on this theory, the so-called block
  maximum method is often used in applications for statistical prediction of
  large value occurrences. In this method, one performs statistical inference
  for the parameters of the Generalised Extreme Value (GEV) distribution,
  using maxima over blocks of regularly sampled observations along an orbit of
  the system.  The observables studied so far in the theory are expressed as
  functions of the distance with respect to a point, which is assumed to be a
  density point of the system's invariant measure. However, this is not the
  structure of the observables typically encountered in physical applications,
  such as windspeed or vorticity in atmospheric models. In this paper we
  consider extreme value limit laws for observables which are not functions of
  the distance from a density point of the dynamical system.  In such cases,
  the limit laws are no longer determined by the functional form of the
  observable and the dimension of the invariant measure: they also depend on
  the specific geometry of the underlying attractor and of the observable's
  level sets.  We present a collection of analytical and numerical results,
  starting with a toral hyperbolic automorphism as a simple template to
  illustrate the main ideas.  We then formulate our main results for a
  uniformly hyperbolic system, the solenoid map.  We also discuss
  non-uniformly hyperbolic examples of maps (H\'enon and Lozi maps) and of
  flows (the Lorenz63 and Lorenz84 models). Our purpose is to outline the main
  ideas and to highlight several serious problems found in the numerical
  estimation of the limit laws.
\end{abstract}

\maketitle

\tableofcontents

%%%%%%%%%%%%%%%%%%%%%%%%%%%%%%
\section{Introduction} 
%%%%%%%%%%%%%%%%
\label{sec:intro}

\subsubsection*{Background on extreme value theory}
Classic extreme value theory concerns the probability distribution of unlikely
(large) events,
see~\cite{Gal78,LLR83,Res87,Castillo,Embrechts,Coles2001,Beirlantetal2004}. Given
a stochastic process $X_1,X_2, \dots$ governed by independent identically
distributed random variables, let $M_n$ be the random variable defined as the
maximum over the first $n$ occurrences:
\begin{equation*}
  \label{eq:maxn}
  M_n = \max(X_1,\dots,X_n). 
\end{equation*}
This variable has a degenerate limit as $n\to\infty$, and therefore it is
necessary to consider a rescaling. Suppose that there exist sequences $a_n\geq
0$ and $b_n\in \R$ such that the rescaled variable $a_n(M_n - b_n)$ converges
to a non-degenerate distribution. That is
\begin{equation}
  \label{eq:EVlimit} 
  \lim_{n\rightarrow \infty} P\left(a_n\left(M_n-b_n\right) \leq x \right) = G(x)
\end{equation}
Then extreme values theory asserts that the limit $G(x)$ can only be one of
three different types: the Gumbel, Weibull and Fr\'echet parametric families
of probability distributions.  These three families can be combined into a
single three-parameter family having distribution function
\begin{equation}
  \label{eq:GEV}
  G(x) = \exp\left(- \left[1 + \xi \left(-\frac{x-\mu}{\sigma}\right)
    \right]^{-1/\xi} \right), 
\end{equation}
defined on the set $\left\{x |\thinspace 1 + \xi
  \left(-\frac{x-\mu}{\sigma}\right) >0\right\}$, where the parameters satisfy
$-\infty < \mu <\infty$, $\sigma>0$ and $-\infty<\xi <
\infty$. Eq.~\eqref{GEV} is called the generalised extreme value (GEV) family
of distributions. The subset of the GEV family with $\xi=0$ is interpreted as
the limit of~\eqref{GEV} as $\xi\rightarrow 0$, leading to the Gumbel family
(with parameters $\mu$ and $\sigma$).

In the applications the GEV family is particularly useful to predict the
probability of occurrence of future large values of a quantity, given a sample
of past experimental measurements of that quantity.  The so-called \emph{block
  maximum} method is frequently used in this setting. Here one extracts a
sub-sample of maxima over data blocks: in environmental and climate contexts
one often uses blocks of length one year, hence the name of \emph{annual
  maximum} method. One then estimates the parameters $(\mu,\sigma,\xi)$,
assuming that that the block maxima form a random sample drawn from a GEV
distribution with unknown parameters.  Maximum likelihood is a common
estimation method~\cite{Coles2001}: in this case, standard asymptotic theory
also provides confidence intervals (uncertainties) for the point
estimates. The estimated GEV parameters and associated uncertainties can then
be used to derive other quantities of interest, such as return periods for
given return levels of the variable of interest, see the above references
and~\cite{Felicietal2007a,Felicietal2007b,Vitoloetal2008,VHF:09} for examples.

\subsubsection*{Extremes in deterministic systems}
Recent work has extended the domain of extreme value theory to the setting of
chaotic deterministic dynamical systems
~\cite{Collet,Hai03,Fre09,FF08,Chazottes:10,FFT10,FFT10b,FFT10c,Gupta,HNT}.
We briefly outline the difference of our problem setting as opposed to the
above results.  Suppose that we have a dynamical system $(\XX,\nu, f)$, where
$\XX$ is a $d$-dimensional Riemannian manifold, $f:\XX\rightarrow \XX$ a
measurable map and $\nu$ an $f$-invariant probability measure. Assume that
there is a compact invariant set $\Lambda\subset\XX$ which supports the
measure $\nu$. Specifically, our main interest is the situation where
$\Lambda$ is a strange attractor and $\nu$ is a Sinai-Ruelle-Bowen (SRB)
measure~\cite{Young2002}.  Given an observable $\phi:\XX \rightarrow \R \cup
\{+\infty\}$ we study extreme value limit laws for the stationary stochastic
process $X_1, X_2, \dots$ defined by
\begin{equation}
\label{equation-stocastic-process-ds} 
X_i =\phi \circ f^{i-1}, \quad i \geq 1. 
\end{equation}
The theoretical work cited above focused on the special case where $\phi$
has the form
\begin{equation}
  \label{eq:obs-dist} 
  \phi(p) = g(\dist(p,p_M)),\quad p\in\XX,
\end{equation}
where $g:[0,+\infty)\to\R$ is a measurable function of the distance
$\dist(\cdot,\cdot)$ in $\XX$ and $p_M$ is a density point of $\nu$. However,
typical observation functions used in applications \emph{are not of this
  form}. Consider for example the quasi-geostrophic model
of~\cite{Felicietal2007a,Felicietal2007b}: this model was conjectured
in~\cite{Lucarinietal2007} to possess a compact (bounded) strange attractor in
its (unbounded) phase space. The observables used
in~\cite{Felicietal2007a,Felicietal2007b,VHF:09,VitoloandSperanza2011} are the
system's total energy, the wind speed and vorticity at a gridpoint in the
lower level. These observables can be written as
\begin{equation}
  \label{eq:energy-vorticity}
  \phi_E(p)=p^TEp, \quad  \phi_W(p)=||Wp||, \quad \phi_V(p)=Vp,
  \quad\text{respectively,}
\end{equation}
where $p$ is a point in the phase space $\XX=\R^d$, $||\cdot||$ denotes the
Euclidean norm and $E\in\R^{d\times d}$, $W\in\R^{2\times d}$,
$V\in\R^{1\times d}$ are matrices. None of the observables
in~\eqref{energy-vorticity} has the form~\eqref{obs-dist}. In fact this
situation is to be expected in many, if not in most, observables found in
applications, including the atmospheric and oceanic models
of~\cite{Sterk:10,BDSSV2010}.  Although observables such
as~\eqref{energy-vorticity} are usually unbounded in the system's phase space,
the system's attractor $\Lambda$ is usually bounded due to the presence of
dissipative processes in the models. Therefore, time series of such
observables should be expected to have an upper bound and, hence, large values
typically obey Weibull limit distributions, see~\cite{Felicietal2007a,VHF:09}
for a more detailed discussion.

\subsubsection*{Sketch of the results}
In this paper we will consider observables $\phi$ which are not a function of
the distance from a point $p_M$ as in~\eqref{obs-dist}. Such observables
include cases, like those just mentioned, where $\phi$ has no upper bound in
the phase space, although time series of $\phi$ on the system's attractor are
bounded. Hence we will restrict to the Weibull case in our numerical
examples. For comparison with the already available theory, we also consider
cases when $\phi$ is maximised at a point $p_M$, where however $p_M$ may or
may not be a density point for the invariant measure $\nu.$

In such situations, to determine the form of the limiting GEV distribution
$G(x)$ becomes a much more delicate problem: $G(x)$ is no longer determined by
the functional form of $\phi(p)$ and by the dimension of the SRB measure
$\nu$, but also critically depends on the geometry of the attractor
$\Lambda\subset\XX$ and of the level sets of $\phi$. A careful analysis is
required even if we assume that $\phi$ takes the form of~\eqref{obs-dist}, but
allowing instead that $p_M\not\in\Lambda.$ Without attempting an exhaustive
analysis of all possible cases, we focus on selected examples to illustrate
the key ideas of our approach, in view of applications to a given system and
observable.

To whet the Reader's appetite, we here anticipate one of the results of this
paper.  For $f$ we consider the solenoid map~\cite{BroerTakens2011} embedded
in $\R^3=\{(x,y,z)\}$: this system possesses a strange attractor $\Lambda$
which is locally the product of a Cantor set with an
interval~\cite{Hasselblatt:04}, where the interval represent the leaves of a
one-dimensional unstable manifold $W^u$. For the observable we take
$\phi(x,y,z)=ax+by+c+d$, which is clearly not of the form~\eqref{obs-dist}:
rather, $\phi$ resembles the vorticity observable defined
in~\eqref{energy-vorticity}. For this pair of system and observable we obtain
the formula
\begin{equation}\label{eq:general-tail-anticipated}
-\frac{1}{\xi}=\frac{d_u}{2}+d_s
\end{equation}
for the tail index $\xi$ of the limiting GEV distribution. Here
$d_u=\dim(W^u)=1$ and $d_s$ is the dimension in the stable direction, which in
this case is given by $\dim_H(\Lambda)-1$, where $\dim_H$ denotes the
Hausdorff dimension.  Loosely speaking, the factor $1/2$
in~\eqref{general-tail-anticipated} is obtained under a generic condition of
quadratic tangency between a local unstable manifold within the attractor
$\Lambda$ and the level sets of the observable $\phi$.  As we shall argue, we
believe formula~\eqref{general-tail-anticipated} to be valid, or at least
sufficiently informative, for a large class of pairs $(f,\phi)$ of
systems-observables. However, we also discuss examples where
formula~\eqref{general-tail-anticipated} has to be modified to take into
account the local geometry or the local scaling of the invariant measure of
the attractor, or the local behaviour of the level sets of the observable.  We
will restrict our discussion to the tail index $\xi$, which is usually the
most delicate parameter to estimate in the analysis of extreme values: see
e.g.~\cite{Coles2001,Beirlantetal2004} and~\cite{Faranda2011} for the link
between the normalising constants $a_n,b_n$ and the other two GEV parameters
$\mu,\sigma$. We note, however, that our numerical procedure also provides
estimates for the latter two parameters, see \appref{numerical}.

%  locally manifold $W^u(\YY)q$ of some saddle-like
%$f$-invariant manifold $\YY\subset\XX$. Suppose

\subsubsection*{Outline of the paper}
This paper is organised as follows. The general framework of extremes in
dynamical systems is presented in \secref{mainresult}. Our main theoretical
results are formulated in Sections~\ref{sec:thom} to~\ref{sec:lorenz} for
specific dynamical systems.  As a particularly simple example, we consider
Thom's map in \secref{thom}, to illustrate the main ideas in our
approach. Then in \secref{solenoid} we discuss the solenoid attractor, which
displays several features which are found in many concrete physical systems.
\secref{henon} presents results for two non-uniformly hyperbolic systems, the
H\'enon and Lozi maps. In \secref{lorenz} we examine two prototypical flows
with chaotic dynamics due to Lorenz~\cite{Lorenz:63,Lorenz:84}.  In all of the
sections, analytical calculations precede numerical simulations, where the
latter aim to show the typical behaviour and estimation problems which can be
expected to occur. We return in \secref{conclusions} to the general relevance
of our approach.

\section{Extremes in dynamical systems: the general problem setting}
\label{sec:mainresult}
We consider a measure preserving system $(\XX,\nu, f)$ with a compact
attracting set $\Lambda\subset \XX$. Given an observable
$\phi:\XX\to\mathbb{R}$ and a threshold $u\in\R$, we define the level regions
$L^+(u)$ (resp. level sets $L(u)$) as follows:
\begin{equation}
  \label{eq:levelsets}
  L^{+}(u)=\{p\in\XX:\phi(p)\geq u\},\qquad
  L(u)=\{p\in\XX:\phi(p)=u\}.
\end{equation}
For the reasons discussed in the Introduction, we consider observables which
achieve a finite maximum within $\Lambda$, although the observable themselves
could be unbounded in $\XX$. We define
\begin{equation}
  \label{eq:suprem}
  \tilde{u}=\sup_{p\in\Lambda}\phi(p).
\end{equation}
Since $\Lambda$ is compact there exists (at least) one point
$\tilde{p}\in\Lambda$ for which $\phi(\tilde{p})=\tilde{u}$. We will assume
that such an \emph{extremal point} $\tilde{p}$ is unique. Given our focus on
the Weibull case (again, see the Introduction) we consider sequences
$u_n=u/a_n+b_n$ for which the limit
\begin{equation}
  \label{eq:tau}
  \lim_{n\to\infty}n\nu(L^{+}(u_n)):=\tau(u)
\end{equation}
exists.  From the theory of~\cite{Leadbetter1983}, we can choose
$b_n=\tilde{u}$, and we take $a_n\to\infty$. The precise form of $a_n$ depends
on the regularity of $\phi$, and the regularity of the density of $\nu$ in the
vicinity of the extremal point $\tilde{p}$. In general $a_n$ will be a power
law in $n$, and $\tau(u)$ will be regularly varying in $u$.  If we now
consider the process $M_n = \max(X_1,\dots,X_n)$ with $X_n= \phi \circ f^n$,
then we investigate to what extent the following statement is true:
\begin{equation}\label{extreme.dist}
    n\nu\{p:\phi(p)\geq u_n\}\to\tau(u)\quad\Leftrightarrow\quad
    \nu\{M_{n}\leq u_n\}\to e^{-\tau(u)}.
\end{equation}
If $\tau(u)=u^{\alpha}$, then the process $M_n$ is described by a GEV
distribution with tail index $\xi=-1/\alpha$. The statement
(\ref{extreme.dist}) is shown to hold for a wide class of dynamical systems,
such as those governed by non-uniformly expanding maps, and systems with
(non)-uniformly hyperbolic attractors,
\cite{Chazottes:10,Guptaetal2008}. However the current theory for analysing
extremes in dynamical systems assumes that the level regions $L^{+}(u)$
introduced in~\eqref{levelsets} are described by balls, and moreover that
these balls are centred on points in $\Lambda$ that are generic for
$\nu$. These assumptions allow for the tail index to be expressed in terms of
local dimension formulae for measures.

In this article we do not assume that the level sets are balls: for example we
consider observables of the form
$\phi(p)=\phi(x_1,\ldots,x_d)=\sum_{i}|x_i|^{a_i}$, where the level sets have
cusps or are non-conformal. We also consider observables
$\phi(p)=\sum_{i}c_ix_i$, for which the level sets are hyperplanes. For
observables of these types (also compare with~\eqref{energy-vorticity}) the
standard machinery does not immediately apply. The first problem is to
determine the sequence $u_n$ and the limit $\tau(u)$ defined
in~\eqref{tau}. Even if the measure $\nu$ is sufficiently regular then the
sequence $u_n$ will depend on the geometry of the attractor close to where
$\phi(p)$ achieves its maximum value on $\Lambda$, in addition to depending on
the form of $\phi$. In \secref{thom} we illustrate the various geometrical
scenarios that can arise using an hyperbolic toral automorphism as a simple
example. When $\nu$ is a more general SRB measure, then even for uniformly
hyperbolic systems (such as the solenoid map) it becomes a non-trivial problem
to determine $u_n$ and $\tau(u)$. We discuss this scenario in
\secref{solenoid}.  The second problem is to verify statement
(\ref{extreme.dist}) for the class of observables under consideration. This
relies upon checking two conditions $D_2(u_n), D'(u_n)$, see
\cite{FF08,Guptaetal2008}. We summarise these conditions as follows.  For
integers $t,l$ let $M_{t,l}=\max\{X_{t+1},X_{t+2},\ldots, X_{t+l}\}$, with
$M_{0,l}:=M_l$. Then:
\begin{description}
\item[$(D_2(u_n))$] We say condition $D_2(u_n)$ holds for the process
  $X_0,X_1,\ldots, $ if for any integers $l$,$t$ and $n$ we have
  \begin{equation*}
    |\nu ( X_1 >u_n, M_{t,l} \le u_n )-\nu (X_1 >u_n)\nu ( M_l \le u_n)| \le
    \gamma(n,t),
  \end{equation*}
  where $\gamma (n,t)$ is non-increasing in $t$ for each $n$ and
  $n\gamma(n,t_n)\to 0$ as $n\to \infty$ for some sequence $t_n=o(n)$,
  $t_n\rightarrow \infty$.
\item[$(D'(u_n))$] We say condition $D^{'}(u_n)$ holds for the process
  $X_1,X_2,\ldots, $ if
\begin{equation}\label{cond:dprime}
  \lim_{k\to \infty}\limsup_{n\to\infty} n\sum_{l=2}^{[n/k]}\nu(X_1>u_n,X_j>u_n)=0.
\end{equation}
\end{description}
If the level sets have complicated geometry, or if the measure $\nu$ is
supported on a fractal set then these conditions must be carefully
checked. For uniformly hyperbolic systems, and for observations that are
functions of balls these conditions are checked in \cite{Guptaetal2008}.  In
this article we consider the computation of the GEV tail index $\xi$ for more
general observations and contrast to results known for observations that are
functions of balls.  We focus on particular examples to highlight how the
geometrical features of the level sets and the attractor feed into the
computation of the tail index $\xi$, without attempting an exhaustive analysis
of all possible cases. We also discuss the computation of the tail index for
non-uniformly hyperbolic systems such as the Lozi map and H\'enon map, and also
for Lorenz flows, again for general observations.

%%%%%%%%%%%%%%%%%%%%%%%%%%%%%%%%%%%%%%%%%%%%%%%%%%%%%%%%%%%%
\section{A prototypical example: Thom's map}
%%%%%%%%%%%%%%%%%%%%%%%%%%%%%%%%%%%%%%%%%%%%%%%%%%%%%%%%%%%%
\label{sec:thom}

Let $\T^2=\R^2\mod 1$ be the $2$ dimensional torus. Thom's map $f: \T^2
\rightarrow \T^2$, also known as \emph{Arnold's cat map}, is the hyperbolic
toral automorphism defined by
\begin{equation}
   \label{eq:thom} 
f(x,y) = (2x+y, x+y) \mod 1.
\end{equation}
This system is Anosov and it has Lebesgue measure $\nu$ on the torus $\T^2$ as
the (unique) invariant measure.  With this example we want to study the role
of the observable in determining extreme value laws. For this purpose we will
consider $f$ as a map of $\mR^2$ having the square $[0,1)^2$ as the invariant
set. In other words, $\XX=\R^2$ and $\Lambda=[0,1)^2$, hence $\Lambda$ is not
an \emph{attractor}, strictly speaking.  The advantage is that this allows us
to take functions of $\mathbb{R}^2$ as observables, rather than functions of
$\T^2$. In this way, we can construct observables which are maximised at
points in the interior or in the complement of $\Lambda$ and whose level sets
have different shapes.

The main point of this section is that the value of the tail index is
determined by the interaction between the shape of level
sets~\eqref{levelsets} of the observable and the shape of the support of the
invariant measure (colloquially, the geometry of the attractor).  To
illustrate our ideas, and without attempting to cover all possible cases, we
consider the following two observables $\phi_{\alpha},\phi_{a,b}:\mR^2\to\mR$
\begin{align}
  \label{eq:obsalpha}
  \phi_{\alpha}(x,y)&=1-\dist(p,p_M)^\alpha,\quad\text{with $p=(x,y)\in\mR^2$.}\\
  \label{eq:obsab}
  \phi_{a,b}(x,y)&=1-|x-x_M|^a-|x-y_M|^b,
\end{align}
where, given our focus on the Weibull case, we require $a,b,\alpha>0$.  Both
observables are maximised at a point $p_M=(x_M,y_M)\in\mR^2$.  When $p_M$ is
in the interior of $\Lambda$, observable~\eqref{obsalpha} has the form so far
analysed in the mathematical literature about extremes in dynamical systems,
but we will also consider the case $p_M\not\in\Lambda.$
Observable~\eqref{obsab} has been chosen to illustrate the effect of the shape
of the level sets: the level regions of~\eqref{obsab} are not balls unless
$a=b=2$, in which case~\eqref{obsab} can be written as~\eqref{obsalpha} for
$\alpha=2$.  The next subsection contains our analytical results, numerical
simulations are postponed to \secref{thom-numerics}.

\subsection{Analytical calculations}
\label{sec:thom-theory}

The level regions $L^+(u)$ as defined in~\eqref{levelsets} are always balls
for observable~\eqref{obsalpha}. However three (main) different situations
occur, depending on the location of the point $p_M$ relative to the support of
the invariant measure, see the sketch in \figref{thom-illustrationalpha}.
\begin{thm}
  \label{thm:thom-theoryalpha}
  Let $\xi$ be the tail index of the GEV limit distribution associated to the
  process $M_n = \max(X_1,\dots,X_n)$ with $X_n= \phi \circ f^n$, where $f$ is
  the map~\eqref{thom} and $\phi_\alpha:\R^2\to\mR$ is the observable
  in~\eqref{obsalpha}.  Then for $\nu$-a.e. $p_M=(x_M,y_M)\in\mathbb{R}^2$ we
  have:
  \begin{align}
    \label{eq:tail-thom-alphain}
    -\frac1\xi&=\frac 2\alpha &&\text{for $p_M\in\Lambda$;}\\
    \label{eq:tail-thom-alphaout1}
    -\frac1\xi&=\frac 32 &&\text{for $p_M\not\in\Lambda$,
      with either $y_M\in(0,1)$ or $x_M\in(0,1)$;}\\
    \label{eq:tail-thom-alphaout2}
    -\frac1\xi&=2 &&\text{for $p_M\not\in\Lambda$,
      with both $x_M,y_M\not\in[0,1]$;}
  \end{align}
\end{thm}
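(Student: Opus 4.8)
The plan is to reduce the computation of the tail index $\xi$ to the asymptotics of $\tau(u)$ defined in~\eqref{tau}, and then to estimate $\nu(L^+(u_n))$ directly as the Lebesgue measure of a small ball intersected with the square $\Lambda=[0,1)^2$. Since $\phi_\alpha$ is a radial function of the distance from $p_M$, the level region $L^+(u)$ is exactly the ball $B(p_M, r(u))$ of radius $r(u)=(1-u)^{1/\alpha}$, so $\tau(u)=\lim_n n\,\nu\bigl(B(p_M,r(u_n))\cap\Lambda\bigr)$ with $r(u_n)\to 0$ as $n\to\infty$. The key point is that the local geometry of $\Lambda$ near $p_M$ — more precisely, the leading-order behaviour of the area of $B(p_M,\rho)\cap\Lambda$ as $\rho\to0$ — produces different powers of $\rho$, hence different powers of $1-u$, hence different values of $\xi$.

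First I would treat the three cases geometrically. If $p_M\in\Lambda$ lies in the interior of the square (which holds for $\nu$-a.e.\ $p_M\in\Lambda$ since $\partial\Lambda$ has Lebesgue measure zero), then for $\rho$ small enough $B(p_M,\rho)\subset\Lambda$, so $\nu(B(p_M,\rho)\cap\Lambda)=\pi\rho^2$, giving $\tau(u)\asymp (1-u)^{2/\alpha}$ and therefore $-1/\xi=2/\alpha$, recovering~\eqref{tail-thom-alphain}. If $p_M\notin\Lambda$ but one coordinate lies in $(0,1)$ — say $y_M\in(0,1)$ and $x_M<0$ or $x_M>1$ — then as $\rho\to0$ the ball $B(p_M,\rho)$ meets $\Lambda$ in a region that is asymptotically a half-disk-like circular segment: its area scales like $\rho^{3/2}$ (the width of the overlap in one direction is $\asymp\rho$ while the chord length in the transverse direction is $\asymp\sqrt{\rho}$). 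This gives $\tau(u)\asymp(1-u)^{3/2}$ and $-1/\xi=3/2$, which is~\eqref{tail-thom-alphaout1}; the edge case $x_M\in\{0,1\}$ is excluded as $\nu$-null. Finally, if both $x_M,y_M\notin[0,1]$ then $p_M$ is "diagonally outside" the square, the nearest point of $\Lambda$ is a corner, and $B(p_M,\rho)\cap\Lambda$ is asymptotically a full quarter-disk-shaped neighbourhood of that corner, with area $\asymp\rho^2$; hence $\tau(u)\asymp(1-u)^2$ and $-1/\xi=2$, giving~\eqref{tail-thom-alphaout2}. In each case one sets $u_n$ by solving $n\,\nu(L^+(u_n))\to\tau(u)$, which fixes $a_n$ as the appropriate power law in $n$ and makes $\tau$ regularly varying with the stated exponent.

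The remaining ingredient is to justify that statement~\eqref{extreme.dist} actually holds for this pair $(f,\phi_\alpha)$, i.e.\ that the block-maximum limit is governed by $e^{-\tau(u)}$ with the $\tau$ just computed. For Thom's map with Lebesgue measure this should follow from the verification of conditions $D_2(u_n)$ and $D'(u_n)$: the map is Anosov with exponential decay of correlations for Hölder observables, so $D_2(u_n)$ follows by a standard correlation-decay argument applied to the indicator-type functions of the shrinking level regions, while $D'(u_n)$ follows from the fact that short returns of the hyperbolic toral automorphism to a shrinking ball are negligible — this is exactly the kind of estimate carried out in~\cite{Guptaetal2008,FF08} for functions of balls, and since our level regions \emph{are} balls here, those arguments apply essentially verbatim.

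I expect the main obstacle to be the uniformity and rigor of the geometric area estimates near the boundary and the corner — specifically, showing that the $\rho^{3/2}$ and $\rho^2$ scalings hold with genuine limiting constants (not just up to bounded factors) uniformly in the relevant direction, so that $\tau(u)$ is a clean power $u^\alpha$-type law rather than merely regularly varying with oscillating slowly-varying part. One must also handle the wrapping of the torus metric versus the $\R^2$ metric: since we are working with $\Lambda=[0,1)^2\subset\R^2$ and $f$ as a map of $\R^2$, the relevant distance is the Euclidean one on $\R^2$, which sidesteps the issue, but care is needed that the dynamics $f^i$ keeps orbits in $\Lambda$ so that only the $\Lambda$-side of the ball contributes to $\nu$. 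Once the area asymptotics are pinned down, reading off $\xi$ from the exponent is immediate.
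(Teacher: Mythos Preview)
Your overall strategy---compute the leading-order Lebesgue area of $L^+(u_n)\cap\Lambda$ in each configuration and defer $D_2(u_n),D'(u_n)$ to~\cite{Guptaetal2008}---is exactly the paper's, and your treatment of case~\eqref{tail-thom-alphain} is correct.

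There is, however, a genuine error in your handling of cases~\eqref{tail-thom-alphaout1} and~\eqref{tail-thom-alphaout2}. You write that $r(u_n)\to 0$ and analyse $B(p_M,\rho)\cap\Lambda$ as $\rho\to 0$, but when $p_M\notin\overline\Lambda$ the ball $B(p_M,\rho)$ is disjoint from $\Lambda$ for all small $\rho$. The supremum of $\phi_\alpha$ on $\Lambda$ is attained at the nearest boundary point $\tilde p$, with value $\tilde u=1-d(p_M,\tilde p)^\alpha<1$; one must take $b_n=\tilde u$ (not $b_n=1$), whence $r(u_n)=(1-u_n)^{1/\alpha}\to d(p_M,\tilde p)>0$. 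The correct small parameter is the \emph{overshoot} $\epsilon:=r(u_n)-d(p_M,\tilde p)$, and the paper's Lemmas~\ref{lem:obsalphaout1}--\ref{lem:obsalphaout2} expand $(1-\tilde u-u/a_n)^{1/\alpha}-(1-\tilde u)^{1/\alpha}$ linearly in $1/a_n$ to obtain area $\asymp\epsilon^{3/2}$ (circular segment tangent to an edge) and area $\asymp\epsilon^{2}$ (arc of radius $\approx d(p_M,\tilde p)$ crossing both sides of the corner \emph{transversally}---not a quarter-disk centred at the corner, since the centre $p_M$ is not the corner). The crucial consequence of this linearisation is that the $\alpha$-dependence disappears, which is why the theorem gives $-1/\xi=3/2$ and $-1/\xi=2$ independently of $\alpha$. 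Your expressions $\tau(u)\asymp(1-u)^{3/2}$ and $(1-u)^{2}$ are ill-defined here because $1-u_n\not\to 0$; had you carried the assumption $r(u_n)\to 0$ through consistently, you would have arrived at the incorrect $\alpha$-dependent exponents $3/(2\alpha)$ and $2/\alpha$.
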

For observable~\eqref{obsab} the shape of the level sets $L(u)$ depends on $a$
and $b$. For example, $L(u)$ has a convex elliptic-like shape when both
$a,b>1$ (see the sketch in \figref{thom-illustrationab}~(A)), or an
asteroid-like shape when both $a,b<1$, (see \figref{thom-illustrationab}~(B)).
Clearly various possibilities arise, depending on the geometry of the level
sets, on whether the point $p_M$ is in the interior of $\Lambda$ and on the
local geometry of $\Lambda$ near the extremal point
$\tilde{p}=(\tilde{x},\tilde{y})$ with minimum distance from $p_M$.
\begin{thm}
  \label{thm:thom-theoryab}
  Let $\xi$ be the tail index of the GEV limit distribution associated to the
  process $M_n = \max(X_1,\dots,X_n)$ with $X_n= \phi \circ f^n$, where $f$ is
  the map~\eqref{thom} and $\phi_{a,b}:\R^2\to\mR$ is the observable
  in~\eqref{obsab}.  Then for $\nu$-a.e. $p_M=(x_M,y_M)\in\mathbb{R}^2$ we have:
  \begin{align}
    \label{eq:tail-thom-abin}
    -\frac1\xi&=\frac 1a+\frac 1b &&\text{for $p_M\in\Lambda$.}
  \end{align}
\end{thm}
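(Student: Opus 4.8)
The plan is to reduce the statement to two standard ingredients of the theory outlined in \secref{mainresult}: (i) showing that the limit $\tau(u)$ in~\eqref{tau} exists and is a pure power law with exponent $1/a+1/b$, and (ii) verifying conditions $D_2(u_n)$ and $D'(u_n)$ so that the implication in~\eqref{extreme.dist} applies, giving $\nu\{M_n\le u_n\}\to e^{-\tau(u)}$ and hence the GEV limit with $\xi=-1/\alpha$. Since $\nu$ is Lebesgue measure and the level regions of $\phi_{a,b}$ are not balls, the genuinely new point compared with \thmref{thom-theoryalpha} is the asymptotics of $\nu(L^+(u))$ for these anisotropic ``super-ellipse'' regions; everything else follows the same route as for observable~\eqref{obsalpha}.

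First I would fix the extremal data. For $\nu$-a.e.\ $p_M$ we have $p_M=(x_M,y_M)\in(0,1)^2$, so $\tilde u=\phi_{a,b}(p_M)=1$, the extremal point is $\tilde p=p_M$, and it is unique. Writing $\eps=1-u$ for $u<1$, and using that $p_M$ lies in the interior of $\Lambda$ (so that the super-level region is contained in $[0,1)^2$ for $\eps$ small), we have $\nu(L^+(u))=\mathrm{Leb}\bigl\{(x,y):|x-x_M|^a+|y-y_M|^b\le\eps\bigr\}$. The anisotropic rescaling $x-x_M=\eps^{1/a}s$, $y-y_M=\eps^{1/b}t$ has Jacobian $\eps^{1/a+1/b}$, so
\[
  \nu(L^+(u))=C\,(1-u)^{1/a+1/b},\qquad
  C:=\mathrm{Leb}\bigl\{(s,t):|s|^a+|t|^b\le1\bigr\}\in(0,\infty).
\]
Taking $b_n=\tilde u=1$ and $a_n=(Cn)^{ab/(a+b)}\to\infty$, the sequence $u_n=u/a_n+b_n$ satisfies $n\,\nu(L^+(u_n))=nC\,|u/a_n|^{1/a+1/b}\to|u|^{1/a+1/b}=:\tau(u)$ (for $u<0$, the relevant range in the Weibull case). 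Thus $\tau$ is a power law with exponent $\alpha=1/a+1/b$; the multiplicative constant in $a_n$, and hence the values of $\mu,\sigma$, are immaterial for $\xi$.

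It then remains to check the implication in~\eqref{extreme.dist}. Condition $D_2(u_n)$ I would obtain from the exponential decay of correlations of the Anosov map~\eqref{thom} against bounded-variation functions: $\mathbf{1}_{L^+(u_n)}$ is BV on $\T^2$ with total variation growing only polynomially in $n$, which is dominated by the exponential mixing rate, so $\gamma(n,t)$ can be taken exponentially small in $t$ and $n\gamma(n,t_n)\to0$ for suitable $t_n=o(n)$. Granting $D_2(u_n)$ and $D'(u_n)$, the results cited in \secref{mainresult} (\cite{FF08,Guptaetal2008,Chazottes:10}, applied with the super-ellipses in place of balls) give $\nu\{M_n\le u_n\}\to e^{-\tau(u)}$ with $\tau(u)=|u|^{\alpha}$, hence a GEV limit with $\xi=-1/\alpha$, i.e.\ $-1/\xi=1/a+1/b$, as claimed.

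The main obstacle is the verification of $D'(u_n)$, the ``no short return'' estimate $n\sum_{l=2}^{[n/k]}\nu(X_1>u_n,X_l>u_n)\to0$, since the usual arguments are written for round balls centred at $\nu$-generic points, whereas here $L^+(u_n)$ is an anisotropically shrinking region whose principal axes are \emph{not} aligned with the stable/unstable directions of $f$. The point to exploit is that $L^+(u_n)$ nonetheless shrinks in \emph{every} direction (its diameter is $O(\eps^{\min(1/a,1/b)})$), so its width along the unstable direction tends to $0$, while $f^{-l}$ stretches sets by $\lambda^l$ along the unstable direction ($\lambda>1$ the expanding eigenvalue of~\eqref{thom}). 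A Fubini argument along unstable leaves then bounds $\nu\bigl(L^+(u_n)\cap f^{-l}L^+(u_n)\bigr)$ by a constant times $\lambda^{-l}\nu(L^+(u_n))$, uniformly for $p_M$ outside a Lebesgue-null set; summing the geometric series over $l$ yields $n\sum_l\nu(X_1>u_n,X_l>u_n)=O(\tau(u)\lambda^{-1})\cdot(1+o(1))$ and, after first sending $n\to\infty$ and then $k\to\infty$ in~\eqref{cond:dprime}, the required limit $0$. This is the only place where the specific geometry of the level sets of~\eqref{obsab} enters beyond the measure estimate above, and it is controlled by the same hyperbolic mechanism used for~\eqref{obsalpha} in \thmref{thom-theoryalpha}.
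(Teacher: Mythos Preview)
Your measure computation is correct and in fact sharper than the paper's: the paper proves \lemref{ab} by sandwiching $L^{+}(1-\eps)$ between rectangles to get $\mathrm{Leb}(L^{+}(u))\asymp(1-u)^{1/a+1/b}$ with unspecified constants $C_0\le C\le 4$, whereas your anisotropic change of variables $(x-x_M,y-y_M)=(\eps^{1/a}s,\eps^{1/b}t)$ gives the exact identity with $C=\mathrm{Leb}\{|s|^a+|t|^b\le 1\}$. Both routes yield $\tau(u)=(-u)^{1/a+1/b}$ and $a_n\sim n^{ab/(a+b)}$, which is all that matters for $\xi$. For $D_2(u_n)$ and $D'(u_n)$ the paper simply defers to \cite{Guptaetal2008}, so your decision to sketch them goes beyond what the paper does.

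However, your $D'(u_n)$ sketch has a genuine gap. From the bound $\nu\bigl(L^{+}(u_n)\cap f^{-l}L^{+}(u_n)\bigr)\le C\lambda^{-l}\nu(L^{+}(u_n))$ you obtain
\[
n\sum_{l=2}^{[n/k]}\nu(X_1>u_n,X_l>u_n)\le C\,n\nu(L^{+}(u_n))\sum_{l\ge 2}\lambda^{-l}
=O\bigl(\tau(u)\bigr),
\]
a positive constant that depends neither on $n$ nor on $k$; sending $n\to\infty$ and then $k\to\infty$ therefore does \emph{not} give $0$ as you claim. The geometric decay in $l$ alone is not enough: you need an additional smallness coming from $\nu(L^{+}(u_n))$ itself. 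The standard fix (as in \cite{Collet,Guptaetal2008}) is to split the sum at $g(n)\sim c\log n$. For $l\le g(n)$ one uses that $p_M$ is $\nu$-generic (in particular non-periodic), so for $n$ large the shrinking region $L^{+}(u_n)$ satisfies $L^{+}(u_n)\cap f^{-l}L^{+}(u_n)=\emptyset$ for all $1\le l\le g(n)$; this is where the ``$\nu$-a.e.\ $p_M$'' hypothesis is actually used. For $l>g(n)$ exponential decay of correlations gives $\nu(L^{+}(u_n)\cap f^{-l}L^{+}(u_n))\le\nu(L^{+}(u_n))^2+C\lambda^{-l}$, and then $n\sum_{l=g(n)}^{[n/k]}\nu(L^{+}(u_n))^2\approx\tau(u)^2/k\to 0$ as $k\to\infty$, while the tail $n\sum_{l>g(n)}\lambda^{-l}\to 0$ for suitable $c$. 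Your Fubini/unstable-width idea is useful for controlling the intermediate range, but it cannot replace the non-recurrence argument at small $l$.
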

To prove Theorems~\ref{thm:thom-theoryalpha}-\ref{thm:thom-theoryab} the main
step is to determine the explicit sequence $u_n$ and functional form of
$\tau(u)$ as defined in~\eqref{tau}. We will not give the verification of
$D_2(u_n)$, $D'(u_n)$ as this follows step by step from \cite{Guptaetal2008}
for this class of observables.  The main proof of \thmref{thom-theoryalpha} is
contained in Lemmas~\ref{lem:obsalphain},\ref{lem:obsalphaout1}
and~\ref{lem:obsalphaout2}.  The proof of \thmref{thom-theoryab} is given in
\lemref{ab}.  We do not further discuss case~(C) of
\figref{thom-illustrationab}, or the other configurations not covered by
\figref{thom-illustrationab}.
\begin{lem}
  \label{lem:obsalphain}
  Suppose $p_M\in\interior(\Lambda)=(0,1)^2$ and $\phi$ takes the form of
  (\ref{eq:obsalpha}), then $\xi=-\alpha/2$.
\end{lem}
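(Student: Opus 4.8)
The plan is to reduce the statement to identifying the normalising sequence $u_n$ and the limit function $\tau(u)$ of~\eqref{tau}, after which $\xi$ can be read off from the equivalence~(\ref{extreme.dist}). First I would note that, since $p_M$ lies in the open square and $\phi_\alpha$ is a strictly decreasing function of $\dist(p,p_M)$, the supremum~\eqref{suprem} equals $\tilde u=1$ and is attained only at $\tilde p=p_M$, so the uniqueness hypothesis of \secref{mainresult} holds automatically. Moreover, for $u<1$ the level region~\eqref{levelsets} is the metric ball
\begin{equation*}
  L^{+}(u)=\{p\in\R^2:\dist(p,p_M)\le(1-u)^{1/\alpha}\},
\end{equation*}
and because $p_M\in\interior(\Lambda)$ there is $u^{*}<1$ such that for every $u\in(u^{*},1)$ this ball lies inside $\Lambda=[0,1)^2$; in particular it does not wrap around the torus.

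Next I would compute the measure. Since $\nu$ is (normalised) Lebesgue measure and $\Lambda$ has unit area, $\nu(L^{+}(u))=\pi(1-u)^{2/\alpha}$ for all $u\in(u^{*},1)$. Following~\cite{Leadbetter1983} I take $b_n=\tilde u=1$ and $u_n=u/a_n+1$ with $u<0$ (so that $u_n<1$, consistent with the Weibull regime); then
\begin{equation*}
  n\,\nu(L^{+}(u_n))=\pi\,|u|^{2/\alpha}\,n\,a_n^{-2/\alpha}.
\end{equation*}
Choosing $a_n=n^{\alpha/2}$ makes the right-hand side independent of $n$, so the limit~\eqref{tau} exists with $\tau(u)=\pi\,|u|^{2/\alpha}$, a power law of exponent $2/\alpha$. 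By the rule recorded immediately after~(\ref{extreme.dist}), the GEV tail index is then $\xi=-\alpha/2$, provided the equivalence~(\ref{extreme.dist}) itself holds.

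It thus remains to establish~(\ref{extreme.dist}) for this $u_n$, i.e. to check conditions $D_2(u_n)$ and $D'(u_n)$. The observable is precisely of the ball type and Thom's map is Anosov with Lebesgue SRB measure, so these conditions hold verbatim by~\cite{Guptaetal2008}; this is also where one needs $p_M$ to lie outside the (measure-zero) set of periodic points with anomalously fast recurrence, which is the source of the ``$\nu$-a.e. $p_M$'' qualifier in \thmref{thom-theoryalpha} (for periodic $p_M$ one instead picks up an extremal index $\theta<1$, which leaves $\xi$ unchanged). I do not expect a serious obstacle here: the only care needed is the bookkeeping already indicated --- ensuring the ball does not wrap around the torus (guaranteed by $p_M\in\interior(\Lambda)$ together with $a_n\to\infty$) and carrying the harmless constant $\pi$ through $\tau(u)$, which affects only the location and scale parameters $\mu,\sigma$ and not $\xi$.
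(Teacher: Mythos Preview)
Your proposal is correct and follows essentially the same route as the paper: identify $L^{+}(u_n)$ as a Euclidean ball of radius $(1-u_n)^{1/\alpha}$, compute its Lebesgue measure, choose $b_n=1$ and $a_n=n^{\alpha/2}$ so that $\tau(u)=(-u)^{2/\alpha}$ (up to a constant), and defer $D_2(u_n),\,D'(u_n)$ to~\cite{Guptaetal2008}. If anything, you are more careful than the paper --- you track the factor $\pi$ and the ``ball fits in $\Lambda$'' caveat explicitly, whereas the paper silently absorbs the constant.
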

\begin{proof}
  If $p_M\in\interior(\Lambda)$ (see \figref{thom-illustrationalpha}~(A)),
  then we see that
  \begin{equation}
    \begin{split}
      n\nu\{\phi(x,y)\geq u_n\} &=n\nu\{x\leq (1-u_n)^{1/\alpha}\}\\
      &=n(1-u_n)^{2/\alpha}.
    \end{split}
  \end{equation}
  Thus the correct scaling laws are $a_n=n^{\alpha/2}, b_n=1$ and
  $\tau(u)=(-u)^{2/\alpha}.$
\end{proof}
\begin{lem}
  \label{lem:obsalphaout1}
  Suppose $p_M\not\in\overline{\Lambda}=[0,1]^2$ and $\phi$ takes the form of
  (\ref{eq:obsalpha}).  If $y_M\in(0,1)$ or $x_M\in(0,1)$ then $\xi=-2/3$.
\end{lem}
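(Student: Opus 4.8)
The plan is to follow the template of \lemref{obsalphain}: compute the normalising sequence $u_n=u/a_n+b_n$ and the limit $\tau(u)$ of \eqref{tau} explicitly, then read off $\xi$ from the equivalence~(\ref{extreme.dist}), whose hypotheses $D_2(u_n)$ and $D'(u_n)$ are deferred to \cite{Guptaetal2008} exactly as for \thmref{thom-theoryalpha}. Since $\nu$ is Lebesgue measure on $\Lambda=[0,1)^2$, this reduces to an area computation: $\nu(L^+(u))$ is the Lebesgue measure of the portion of the level region~\eqref{levelsets} lying inside the square.

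First I would fix a canonical configuration. Because $y_M\in(0,1)$ and $p_M\notin[0,1]^2$, necessarily $x_M\notin[0,1]$; assume $x_M<0$ (the cases $x_M>1$, and symmetrically $x_M\in(0,1)$ with $y_M\notin[0,1]$, are handled identically after reflecting the square in a coordinate line). Then the point of $\Lambda$ nearest to $p_M$ is $\tilde p=(0,y_M)$, which lies in the \emph{interior} of an edge of $\Lambda$ precisely because $y_M\in(0,1)$; the distance is $r_0:=\dist(p_M,\Lambda)=|x_M|>0$ and $\tilde u=\phi_\alpha(\tilde p)=1-r_0^{\alpha}$.

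The one genuinely geometric step is next. As $\phi_\alpha$ depends only on $\dist(\cdot,p_M)$, the level region $L^+(u)$ equals the closed disk $B(p_M,r)$ of radius $r=r(u):=(1-u)^{1/\alpha}$, and $r(\tilde u)=r_0$. For $u$ close enough to $\tilde u$ the radius $r$ is close to $r_0$, and since $y_M$ is interior to $(0,1)$ the disk $B(p_M,r)$ meets $\Lambda$ only across the single edge $\{0\}\times(0,1)$ — it stays away from the adjacent corners $(0,0),(0,1)$ and from the remaining edges — so $\nu(L^+(u))$ equals the area of a circular segment of radius $r$ cut at distance $r_0$ from the centre. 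Writing $h=r-r_0\ge 0$ for the segment height, the elementary identity
\[
\nu(L^+(u)) \;=\; r^2\arccos\!\Big(\tfrac{r_0}{r}\Big)-r_0\sqrt{r^2-r_0^2}
\;=\; \tfrac{4\sqrt{2r_0}}{3}\,h^{3/2}\big(1+o(1)\big)\qquad(h\to 0^+)
\]
holds, the $h^{1/2}$ contributions of the two terms cancelling. Expanding $r(u)=(1-u)^{1/\alpha}$ to first order about $u=\tilde u$ gives $h=r(u)-r_0=\tfrac1\alpha\,r_0^{\,1-\alpha}(\tilde u-u)\big(1+o(1)\big)$, whence
\[
\nu(L^+(u)) \;=\; c_0\,(\tilde u-u)^{3/2}\big(1+o(1)\big),\qquad
c_0=\tfrac{4\sqrt{2r_0}}{3}\Big(\tfrac{r_0^{\,1-\alpha}}{\alpha}\Big)^{3/2};
\]
only the exponent $3/2$, not the value of $c_0$, is needed below.

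Finally one reads off the constants. Taking $b_n=\tilde u$ and $a_n=(c_0 n)^{2/3}\to\infty$, the sequence $u_n=u/a_n+b_n$ satisfies $\tilde u-u_n=(-u)/a_n$ for $u<0$, so $n\nu(L^+(u_n))\to(-u)^{3/2}=:\tau(u)$. Since $\tau(u)$ is the power $(-u)^{3/2}$, statement~(\ref{extreme.dist}) gives a GEV limit whose tail index is minus the reciprocal of that power, i.e. $\xi=-2/3$. The only non-routine point is the geometric step above: one must certify that, for $u$ near $\tilde u$, $L^+(u)\cap\Lambda$ really is a single circular segment cut by one edge — which uses precisely the non-degenerate placement of $p_M$ forced by $y_M\in(0,1)$ and $p_M\notin[0,1]^2$ — and then extract the $h^{3/2}$ scaling; everything else is bookkeeping identical to \lemref{obsalphain}.
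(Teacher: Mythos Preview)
Your proof is correct and follows essentially the same approach as the paper's: both identify the extremal point $\tilde p$ as lying in the interior of an edge of $\partial\Lambda$, recognise $L^+(u)\cap\Lambda$ as a circular segment for $u$ near $\tilde u$, and extract the $h^{3/2}$ area scaling to obtain $\tau(u)=(-u)^{3/2}$ and hence $\xi=-2/3$. Your treatment is in fact more explicit---you give the exact segment area formula, carry out the symmetry reduction, and track constants---whereas the paper simply invokes a ``basic geometrical calculation'' to assert the $3/2$ exponent.
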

\begin{proof}
  If $p_M\not\in \overline{\Lambda}$ then there will exist a unique extremal
  point $\tilde{p}=(\tilde{x},\tilde{y})\in \overline{\Lambda}$ where
  $\phi(p)$ achieves its supremum with value $\tilde{u}$ as
  in~\eqref{suprem}. Since $y_M\in(0,1)$ or $x_M\in(0,1)$ then this point
  $\tilde{p}$ will not be a vertex of $\partial \Lambda$, see
  \figref{thom-illustrationalpha}~(B). The scaling $u_n$ will be chosen to so
  that
  \begin{equation}
    n\nu\{\phi(x,y)\geq u_n\}=n\nu\{p=(x,y)\in \Lambda:d(p,p_M)\leq (1-u_n)^{1/\alpha}\}\to\tau(u).
  \end{equation}
  The middle term is no longer $n(1-u_n)^{1/\alpha}$ since the level region
  that intersects $\Lambda$ is not a ball.  We first of all set
  $u_n=u/a_n+\tilde{u}$ so that
  \begin{equation}
    \nu\{\phi(x,y)\geq u_n\}=\nu\left\{p=(x,y)\in
    \Lambda:(1-\tilde{u})^{1/\alpha}\leq d(p,p_M)\leq 
    \left(1-\tilde{u}-\frac{u}{a_n}\right)^{1/\alpha}\right\}.
  \end{equation}
  To choose $a_n$ we first note that the level set $L(\tilde{u}^{1/\alpha})$
  as defined in~\eqref{levelsets} is a circle that is tangent to $\partial
  \Lambda$ (since the extremal point $\tilde{p}$ is not a vertex).  However
  the level set $L((\tilde{u}-\frac{u}{a_n})^{1/\alpha}))$ crosses $\partial
  \Lambda$ transversally (and is concentric to $L(\tilde{u}^{1/\alpha})$).
  Thus a basic geometrical calculation gives:
  \begin{equation}\label{basic-geom}
    \begin{split}
      \nu\{\phi(x,y)\geq u_n\} &=\left\{(1-\tilde{u}-u/a_n)^{1/\alpha}-
        (1-\tilde{u})^{1/\alpha}\right\}^{3/2}\\
      &=(1-\tilde{u})^{3/2\alpha}\left\{\left(1-\frac{u}{a_n(1-\tilde{u})}\right)^{1/\alpha}-1\right\}^{3/2}\\
      &=(1-\tilde{u})^{3/2\alpha}\left\{-\frac{u}{\alpha a_n(1-\tilde{u})}+O\left(\frac{1}{a^2_n}\right)
      \right\}^{3/2}.
    \end{split}
  \end{equation}
  Setting $a_n=n^{2/3}$ implies that $\tau(u)=(-u)^{3/2}$.
\end{proof}
\begin{lem}
  \label{lem:obsalphaout2}
  Suppose $p_M\not\in\Lambda$ and $\phi$ takes the form of
  (\ref{eq:obsalpha}).  If both $x_M,y_M\not\in[0,1]$ then $\xi=-1/2$.
\end{lem}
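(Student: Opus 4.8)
The plan is to follow the template of Lemmas~\ref{lem:obsalphain} and~\ref{lem:obsalphaout1}: we only need to identify the normalising constants $a_n,b_n$ and the limit function $\tau(u)$ of \eqref{tau}, after which the tail index is read off from (\ref{extreme.dist}), while the dependence conditions $D_2(u_n)$ and $D'(u_n)$ follow for this class of observables exactly as in \cite{Guptaetal2008}. Since $\phi_\alpha$ in \eqref{obsalpha} is a function of distance, the level region $L^+(u)$ is the ball $B(p_M,(1-u)^{1/\alpha})$; what changes here, compared with the earlier lemmas, is the local shape of $\overline{\Lambda}=[0,1]^2$ at the extremal point. First I would observe that if $x_M\notin[0,1]$ and $y_M\notin[0,1]$, then orthogonally projecting $p_M$ onto the convex set $[0,1]^2$ sends each coordinate to an endpoint of $[0,1]$; hence the unique point $\tilde p$ of $\overline\Lambda$ closest to $p_M$ is one of the four corners of the square, and the rest of $\overline\Lambda$ lies at strictly larger distance (see \figref{thom-illustrationalpha}~(C)). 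I would take $r_0=\dist(p_M,\tilde p)>0$ and $b_n=\tilde u=\phi_\alpha(\tilde p)=1-r_0^{\alpha}$, as prescribed by \cite{Leadbetter1983}.

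The core of the argument is a planar area estimate near the corner. With $u_n=u/a_n+\tilde u$ and $u<0$ we have $\nu\{\phi_\alpha\ge u_n\}=\mathrm{Leb}\bigl(B(p_M,r_n)\cap\Lambda\bigr)$, where $r_n=(1-u_n)^{1/\alpha}=(r_0^{\alpha}-u/a_n)^{1/\alpha}$, and for $n$ large this intersection is a connected neighbourhood of $\tilde p$ inside the right angle that $\partial\Lambda$ makes at $\tilde p$. Introduce coordinates $s_1,s_2\ge 0$ along the two edges meeting at $\tilde p$ (pointing into $\Lambda$), and let $\ell_1,\ell_2>0$ be the distances from $p_M$ to the two straight lines carrying those edges, so that $\ell_1^{2}+\ell_2^{2}=r_0^{2}$; the hypothesis $x_M,y_M\notin[0,1]$ is precisely what forces $\ell_1>0$ and $\ell_2>0$. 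Then $\dist(p,p_M)^2=r_0^{2}+2\ell_1 s_1+2\ell_2 s_2+s_1^{2}+s_2^{2}$, and, writing $\delta_n=r_n-r_0>0$, the inequality $\dist(p,p_M)\le r_n$ reads $\ell_1 s_1+\ell_2 s_2\le r_0\delta_n+O(\delta_n^{2})$. Together with $s_1,s_2\ge0$ this defines, to leading order, a right triangle of area $r_0^{2}\delta_n^{2}/(2\ell_1\ell_2)$, the $s_i^{2}$ terms and the circular-arc correction being relatively of order $O(\delta_n)$ over that triangle; hence
\begin{equation*}
  \nu\{\phi_\alpha\ge u_n\}=\frac{r_0^{2}}{2\ell_1\ell_2}\,\delta_n^{2}\,\bigl(1+o(1)\bigr),\qquad
  \delta_n=-\frac{u}{\alpha\,r_0^{\,\alpha-1}}\cdot\frac1{a_n}+O\!\left(\frac1{a_n^{2}}\right).
\end{equation*}

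Substituting $\delta_n$ gives $\nu\{\phi_\alpha\ge u_n\}\sim C\,u^{2}/a_n^{2}$ for a positive constant $C=C(\alpha,p_M)$, so the correct scaling is $a_n=\sqrt{C}\,n^{1/2}$, and then $n\,\nu\{\phi_\alpha\ge u_n\}\to\tau(u)=(-u)^{2}$. Since $\tau$ is quadratic in $-u$, the limiting GEV distribution of $M_n$ has tail index $\xi=-1/2$, which is \eqref{tail-thom-alphaout2}. The step I expect to be the main obstacle is this corner estimate: one has to be sure the disk--square intersection genuinely scales like $\delta_n^{2}$, which relies on the level circle crossing \emph{both} edges at $\tilde p$ transversally (this is where $\ell_1,\ell_2>0$ enters) --- in contrast with \lemref{obsalphaout1}, where tangency to a single flat edge produced the exponent $3/2$. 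As indicated above, $D_2(u_n)$ and $D'(u_n)$ are verified exactly as in \cite{Guptaetal2008} and are not repeated here.
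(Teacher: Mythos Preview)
Your proof is correct and follows essentially the same route as the paper's: identify the extremal point as a corner of $\overline{\Lambda}$, show that the Lebesgue measure of $B(p_M,r_0+\delta)\cap\Lambda$ scales like $\delta^{2}$ (because the level circle meets both edges transversally), and read off $a_n\sim n^{1/2}$, $\tau(u)=(-u)^2$, hence $\xi=-1/2$. Your version is in fact more explicit than the paper's, which asserts the $\mathcal{O}(\epsilon^{2})$ scaling without writing out the triangle approximation you give via the coordinates $(s_1,s_2)$ and the constant $r_0^{2}/(2\ell_1\ell_2)$.
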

\begin{proof}
  Without loss of generality we consider $p_M=(x_M,y_M)$ in the upper right
  hand quadrant as in \figref{thom-illustrationalpha}~(C). Also, we assume
  that $x_M=1+\lambda\cos\theta$, $y_M=1+\lambda\sin\theta$ for $\lambda>0$
  and $\theta\in(0,\pi/2)$. For such values of $(x_M,y_M)$, the corner point
  $(1,1)\in\partial\Lambda$ will always maximise $\phi$. The proof is
  identical to \lemref{obsalphaout1} except that the level sets are not
  tangent to $\partial \Lambda$ at $(1,1)$, as illustrated in
  \figref{thom-illustrationalpha}~(C).  A simple geometric argument shows that
  for a disk $D(\epsilon)$ of radius $\lambda+\epsilon$, centred at
  $(x_M,y_M)$ we have
  $$\mathrm{Leb}\{p=(x,y)\in M\cap D(\epsilon)\}=\mathcal{O}(\epsilon^2),$$
  where $\mathrm{Leb}$ denotes the Lebesgue measure.
  Setting $u_n=u/a_n+\tilde{u}$ and comparing to (\ref{basic-geom}), we
  obtain:
  \begin{equation}\label{basic-geom2}
    \begin{split}
      \nu\{\phi(x,y)\geq u_n\} &=\left\{(1-\tilde{u}-u/a_n)^{1/\alpha}-
        (1-\tilde{u})^{1/\alpha}\right\}^{2}\\
      &=(1-\tilde{u})^{2/\alpha}\left\{\left(1-\frac{u}{a_n(1-\tilde{u})}\right)^{1/\alpha}-1\right\}^2\\
      &=(1-\tilde{u})^{2/\alpha}\left\{-\frac{u}{\alpha a_n(1-\tilde{u})}+O\left(\frac{1}{a^2_n}\right)
      \right\}^2.
    \end{split}
  \end{equation}
  Setting $a_n=n^{1/2}$ implies that $\tau(u)=(-u)^{2}$.
\end{proof}
This concludes the proof of \thmref{thom-theoryalpha}.  For the proof of
\thmref{thom-theoryab} we have the following lemma.
\begin{lem}
  \label{lem:ab}
  Suppose that $p_M\in\interior(\Lambda)=(0,1)^2$ and $\phi$ takes the form of
  (\ref{eq:obsab}). Then for $u\lesssim 1$ we have that $\mathrm{Leb}(L(u)) =
  C(1-u)^{\frac{1}{a} + \frac{1}{b}}$ for some $C_0\leq C\leq 4$ where
  $C_0>0$.
\end{lem}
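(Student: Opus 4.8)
The plan is to read $\mathrm{Leb}(L(u))$ in the statement as the Lebesgue measure of the super-level \emph{region} $L^+(u)=\{(x,y):|x-x_M|^a+|y-y_M|^b\le 1-u\}$ (the level curve itself being Lebesgue-null), and to evaluate it by an anisotropic rescaling that collapses all the $u$-dependence into a single prefactor $(1-u)^{1/a+1/b}$.

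First I would use the hypothesis $p_M\in\interior(\Lambda)=(0,1)^2$ to rule out any truncation by $\partial\Lambda$. The inequality $|x-x_M|^a+|y-y_M|^b\le 1-u$ forces $|x-x_M|\le(1-u)^{1/a}$ and $|y-y_M|\le(1-u)^{1/b}$, and both bounds tend to $0$ as $u\uparrow 1$; hence for $u$ sufficiently close to $1$ the whole region $L^+(u)$ lies inside $\Lambda$, so $\mathrm{Leb}(L^+(u))$ coincides with the planar area of $\{(x,y)\in\R^2:|x-x_M|^a+|y-y_M|^b\le 1-u\}$.

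Next I would translate the centre to the origin and apply the diagonal change of variables $x=x_M+(1-u)^{1/a}s$, $y=y_M+(1-u)^{1/b}t$. This turns the defining inequality into $|s|^a+|t|^b\le 1$ and contributes the constant Jacobian $(1-u)^{1/a+1/b}$, so that
\begin{equation*}
\mathrm{Leb}(L^+(u))=(1-u)^{\frac1a+\frac1b}\,\mathrm{Leb}(R),\qquad R:=\{(s,t)\in\R^2:|s|^a+|t|^b\le 1\}.
\end{equation*}
Thus $C:=\mathrm{Leb}(R)$ depends only on $(a,b)$, which is the asserted power law. To bound it, note that on $R$ one has $|s|^a\le|s|^a+|t|^b\le 1$ and likewise for $t$, so $R\subseteq[-1,1]^2$ and $C\le 4$; and $R$ contains the box $\{|s|\le 2^{-1/a}\}\times\{|t|\le 2^{-1/b}\}$, on which $|s|^a+|t|^b\le\tfrac12+\tfrac12\le 1$, so $C\ge 4\cdot 2^{-1/a-1/b}=:C_0>0$. (If an explicit value were wanted one could instead invoke the Dirichlet formula $C=4\,\Gamma(1+\tfrac1a)\Gamma(1+\tfrac1b)/\Gamma(1+\tfrac1a+\tfrac1b)$, but the two-sided estimate is all the statement needs.)

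I do not expect a genuine obstacle: the argument is one change of variables together with two elementary box inclusions. The only points requiring care are (i) checking that $L^+(u)$ is genuinely contained in $\Lambda$, so that the ambient and restricted Lebesgue measures agree — this is exactly where $p_M\in\interior(\Lambda)$ enters — and (ii) not transposing the exponents $1/a$ and $1/b$ in the non-conformal scaling.
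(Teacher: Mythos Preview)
Your proposal is correct and follows essentially the same approach as the paper: both arguments bound the super-level region $\{|x|^a+|y|^b\le\eps\}$ between an outer rectangle of sides $2\eps^{1/a}\times 2\eps^{1/b}$ and an inner rectangle, yielding the scaling $\eps^{1/a+1/b}$ with two-sided constants. The only cosmetic difference is that you first rescale to the fixed region $R=\{|s|^a+|t|^b\le 1\}$ and then bound $\mathrm{Leb}(R)$, whereas the paper bounds the $\eps$-dependent region directly; your inner box $\{|s|\le 2^{-1/a}\}\times\{|t|\le 2^{-1/b}\}$ is exactly the paper's choice with $q=1/2$.
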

\begin{proof}
  Let $u=1-\eps$.  For $\eps$ sufficiently small the level region can
  be written as
  \begin{equation}
    \label{eq:ellipsoid}
    L^+(u)=\{(x,y) \in \interior(\Lambda): \thinspace |x|^a + |y|^b \leq \eps\}.
  \end{equation}
  The area of this set is bounded from above by the area of a rectangle of
  sides $2\eps^{1/a}$ and $2\eps^{1/b}$. Also, for any $q\in(0,1)$, the area
  of the set is bounded from below by that of a rectangle of sides
  $2q^{1/a}\eps^{1/a}$ and $2(1-q)^{1/b}\eps^{1/b}$, so we can choose
  $C_0=4q^{1/a}(1-q)^{1/b}$.
\end{proof} 
Hence if $(x_M,y_M)\in \interior(\Lambda)$, we see that
\begin{equation}
  n\nu\{p=(x,y):\phi(p)\geq u_n\}\to (-u)^{\frac{1}{a}+\frac{1}{b}}
  \quad\textrm{with}\quad a_n=n^{\frac{ab}{a+b}}, b_n=1.
\end{equation}

\subsection{Numerical results}
\label{sec:thom-numerics}

As formula~\eqref{tail-thom-abin} shows, one of the main ingredients in
determining the tail index is the shape of the level sets of the
observable. We here fix $a=2$ and consider two values of $b$, namely $b=1$ and
$b=3.5$. In both cases, the value of $\xi$ expected according
to~\eqref{tail-thom-abin} is less than $-0.5$. Since the maximum likelihood
estimator is not regular for $\xi<-0.5$~\cite{Coles2001}, we resort to the
method of L-moments for the numerical estimation of the GEV parameters.  See
\appref{numerical} for details on our procedure for parameter estimation and
associated uncertainties.

In \figref{thom-blocklens-p} we examine the sensitivity of the numerical
estimates of $\xi$ with respect to the block length used to compute the
maxima. Essentially no significant variations are found for block lengths
larger than 1000. Hence, we fix $N_{blocklen}=10000$ and conduct a study of
the dependence of the tail index on the parameter $b$ of the
observable~\eqref{tail-thom-abin}. \figref{xis-thom-p} shows a good agreement
with the theoretical predictions of~\eqref{tail-thom-abin} for a range of
values of $b$. In this example the level regions of the observable of the form
$L^+(u)$ as in~\eqref{levelsets} are fully contained in the interior of
$\Lambda=[0,1)^2$, at least for sufficiently large values of the threshold
$u$. The only peculiarity is the non-circular shape of $L^+(u)$, see
\figref{thom-illustrationab} and compare with \lemref{ab}.

We now consider a case where the level regions $L^+(u)$ are not fully
contained in $\Lambda$. We take observable~\eqref{obsalpha} and vary the
location of the point $p_M=(x_M,y_M)$.  Starting from values $x_M,y_M$ such
that $p_M$ is in the interior of $\Lambda$ we increase $x_M$ across $1$,
bringing $p_M$ in the region where $y_M\in(0,1)$ but $x_M\not\in(0,1)$.  This
transition is illustrated in panels (A) and (B) of
\figref{thom-illustrationalpha} and the two situations correspond
to~\eqref{tail-thom-alphain} and~\eqref{tail-thom-alphaout1} respectively.

\figref{thom-blocklens-b} shows the sensitivity of the numerical estimates of
$\xi$ with respect to the block length used to compute the maxima for four
values of $x_M$. Convergence to the theoretical
value~\eqref{tail-thom-alphain} is achieved already with block lengths of a
few hundred iterates when the point $p_M$ is in the interior of $\Lambda$
(panel~(A), $x_M=0.9$) and on the boundary of $\Lambda$ (panel~(B),
$x_M=1.0$). When $p_M$ is in the complement of $\overline{\Lambda}=[0,1]^2$
but close to its boundary, then very large block lengths ($N_{blocklen}>10^5$)
are required to achieve convergence to the theoretical value
of~\eqref{tail-thom-alphaout1} (panel~(C), $x_M=1.01$). When $p_M$ is further
away from $[0,1]^2$ shorter block lengths of about $10^4$ iterates already
guarantee convergence to the theoretical value of~\eqref{tail-thom-alphaout1}
(panel~(D), $x_M=1.1$).

\figref{xis-thom-b}~(A) shows the discontinuity of $\xi$ in the transition
between the situations of panels (A) and (B) in
\figref{thom-illustrationalpha}.  The figure shows the estimated value of
$\xi$ as a function of $x_M$ where $y_M$ is kept constant and a fixed block
length is used. As the point $p_M$ exits $\Lambda$, the value of $\xi$ has a
jump from the value of~\eqref{tail-thom-alphain} to that
of~\eqref{tail-thom-alphaout1}. However, the numerical estimation does not
resolve this jump unless large block lengths ($N_{blocklen}>10^5$ iterates)
are used.

Lastly, \figref{xis-thom-b} shows the discontinuity of $\xi$
from~\eqref{tail-thom-alphaout1} to~\eqref{tail-thom-alphaout2}, at the
transition between the situations of panels (B) and (C) in
\figref{thom-illustrationalpha}. This transition is not resolved accurately even
with block lengths of $10^5$. From the numerical point of view, very large
block lengths are required near the transition to detect the change of scaling
between~(\ref{basic-geom}) and~(\ref{basic-geom2}).

The example discussed in this section is admittedly somewhat artificial.  It
has been chosen to clearly illustrate the main ideas and the problems which
are found in the numerical estimation, without the additional complications
due to higher dimensionality of phase space and fractal nature of the
attractors.  In the next section, we consider a situation which is closer to
what one can expect in concrete physical systems.

%%%%%%%%%%%%%%%%%%%%%%%%%%%%%%%%%%%%%%%%%%%%%%%%%%%%%%%%%%%%
\section{Uniformly hyperbolic attractors: the solenoid map} 
\label{sec:solenoid} 
%%%%%%%%%%%%%%%%%%%%%%%%%%%%%%%%%%%%%%%%%%%%%%%%%%%%%%%%%%%%

Consider the solid torus as the product of $\T=\R/\Z$ times the unit disc in
the complex plane $\D_R=\{ z\in \C | \thinspace |z|<1 \}$.  Then the solenoid
map is defined as follows:
\begin{equation}
\label{eq:solenoid-nonembedded}
\begin{array}{rccc}
  f_{\lambda}: & \T \times \D & \rightarrow & \T \times \D  \\
  \displaystyle \rule{0ex}{4ex} 
  & \left( \psi, w \right) &
  \mapsto & \displaystyle   \left( 2 \psi ,  \lambda w + K e^{i 2\pi\psi}  \right). 
\end{array}
\end{equation}
In order to have the map well defined we need $K + \lambda R < R$ and
$\lambda R < K$. For our purposes it is convenient to have the torus embedded
in $\R^3$.  Consider Cartesian coordinates $(x,y,z)\in\R^3$ and define
corresponding cylindrical coordinates $r,\psi, z$ by $x=r \cos(\psi)$ and $y=r
\sin(\psi)$.  Then the torus of width $R$ can be identified with the set $D=\{
(r-1)^2 + z^2 \leq R ^2\}$ for $R< 1$. The torus $\T \times \D_{R}$ (with
coordinates $(\psi, u+ i v)$) can be identified with $D$ taking $r=1+u$ and
$z=v$. We thus obtain an embedded solenoid map
\begin{align}
  \label{eq:solenoid}
  g_\lambda:D\to D,\quad g_\lambda(\psi,r,z) = (2\psi, 1+ K \cos(\psi) +
  \lambda(r-1), K \sin(\psi) + \lambda z).
\end{align}
The solenoid attractor is defined as the attracting set of the map
$g_\lambda$:
\[
\Lambda = \bigcap_{j\geq 1} g^{j}_{\lambda}(D).
\]
For $\lambda < \frac{1}{2}$ we have
\begin{equation}
  \label{eq:dimsolenoid}
  \dim_H(\Lambda) = 1+ \frac{\log 2}{\log \lambda^{-1}},
\end{equation}
where $\dim_H$ denotes the Hausdorff dimension~\cite{Sim97}. We consider the
following observables $\phi_\alpha,\phi_{abcd}:\mR^3\to\mR$:
\begin{align}
  \label{eq:obsalpha3}
  \phi_{\alpha}(x,y,z)&=1-\dist(p,p_M)^\alpha,\quad\text{with $p=(x,y,z)\in\mR^3$,} \\ 
  \label{eq:obsabcd}
  \phi_{abcd}(x,y,z)&=ax+by+cz+d,
\end{align}
Observable~\eqref{obsalpha3} is maximised at a point $p_M\in\mR^3$,
whereas~\eqref{obsabcd} is unbounded in the phase space $\R^3$ (except for the
trivial choice $a=b=c=0$). Note that the vorticity observable $\phi_V$
of~\eqref{energy-vorticity} has the same general form as~\eqref{obsabcd}.
Our theoretical expectations are first discussed in \secref{solenoid-theory},
followed by numerical results in \secref{solenoid-numerics}.

\subsection{Analytical calculations}
\label{sec:solenoid-theory}
For observables which are functions of distance we have the following result.
It is not explicitly stated in the literature but the proof follows
straightforwardly from \cite{Guptaetal2008}.
\begin{thm}
  Let $\xi$ be the tail index of the GEV limit distribution associated to the
  process $M_n = \max(X_1,\dots,X_n)$ with $X_n= \phi \circ g_\lambda^n$,
  where $g_\lambda$ is the map~\eqref{solenoid} and $\phi_\alpha:\mR^3\to\mR$
  the observable of~\eqref{obsalpha3}, where $p_M\in\Lambda$.  Then we have:
  \begin{equation}
    \label{eq:tail-solenoid-alphain}
    -\frac1\xi=\frac{\dim_H(\Lambda)}{\alpha}.
  \end{equation}
\end{thm}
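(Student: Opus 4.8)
The plan is to reduce the statement to identifying the normalising sequence $a_n$ and the limit function $\tau(u)$ of~(\ref{eq:tau}): once these are known, the equivalence~(\ref{extreme.dist}) reads off the GEV tail index from the regular‑variation index of $\tau$, and the two mixing conditions $D_2(u_n)$, $D'(u_n)$ can be imported verbatim from~\cite{Guptaetal2008}, since $\phi_\alpha$ is a function of a ball centred at a point of $\Lambda$ and this is exactly the class of observables and systems (uniformly hyperbolic attractors) treated there. This is the sense in which the result ``follows straightforwardly''; the only genuine work is a local measure estimate at $p_M$.

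Because $p_M\in\Lambda$, the extremal point is $\tilde p=p_M$ with $\tilde u=\phi_\alpha(p_M)=1$, so (choosing $b_n=\tilde u=1$ as in the Leadbetter framework) the level region is the metric ball
\[
  L^+(u_n)=\{p\in\XX:\dist(p,p_M)\le r_n\},\qquad r_n=(1-u_n)^{1/\alpha}=(-u/a_n)^{1/\alpha},
\]
with $u<0$ in the Weibull regime. Hence I need the asymptotics of $\nu(B(p_M,r))$ as $r\to0$, and here I would exploit the local product structure of the solenoid attractor: near any $p_M\in\Lambda$, $\Lambda$ is the product of a piece of a one-dimensional unstable leaf $W^u$ with a Cantor set transverse to it, and $\nu$ disintegrates locally as normalised Lebesgue measure along the unstable leaves times the natural $(\tfrac12,\tfrac12)$-Bernoulli measure $\mu_C$ on the transverse Cantor set. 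The standing hypotheses $K+\lambda R<R$ and $\lambda R<K$ make the two first‑generation pieces of that Cantor set disjoint --- the same separation that validates~(\ref{eq:dimsolenoid}) --- so $\mu_C$ is Ahlfors regular of exponent $s=\log2/\log\lambda^{-1}$, i.e.\ $\mu_C(B(x,r))\asymp r^{s}$, and multiplying the two factors gives
\[
  \nu(B(p_M,r))\asymp r\cdot r^{s}=r^{\,1+s}=r^{\,\dim_H(\Lambda)}\qquad(r\to0).
\]

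Plugging this into~(\ref{eq:tau}) with $d:=\dim_H(\Lambda)$ yields $n\,\nu(L^+(u_n))\asymp n\,(-u/a_n)^{d/\alpha}$, so taking $a_n=n^{\alpha/d}$ (modulo a slowly varying correction that absorbs the implied constants and leaves the exponent unchanged) gives $\tau(u)=(-u)^{d/\alpha}$. Exactly as in Lemma~\ref{lem:obsalphain}, a limit law with $\tau(u)=(-u)^{\beta}$ is GEV with $-1/\xi=\beta$, so $-1/\xi=d/\alpha=\dim_H(\Lambda)/\alpha$, which is~(\ref{eq:tail-solenoid-alphain}).

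The main obstacle is upgrading the measure estimate to a genuine two‑sided bound $\nu(B(p_M,r))\asymp r^{\dim_H(\Lambda)}$ uniform in small $r$, rather than the weaker logarithmic asymptotics $\log\nu(B(p_M,r))/\log r\to\dim_H(\Lambda)$ that one gets from exact dimensionality alone: obtaining a clean power law for $\tau(u)$ --- and hence a non‑subsequential limit in~(\ref{extreme.dist}) --- requires controlling how the shrinking ball $B(p_M,r)$ cuts the local product structure, and this is exactly where uniform hyperbolicity and the explicit self‑similar geometry transverse to $W^u$ enter. For a general SRB measure only the exact‑dimensional version is available, in which case the same argument still yields the tail index $-1/\xi=\dim_H(\Lambda)/\alpha$ but with $a_n$ regularly varying rather than a pure power of $n$.
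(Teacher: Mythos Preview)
Your proposal is correct and in fact supplies considerably more detail than the paper itself, which simply states that the result ``is not explicitly stated in the literature but the proof follows straightforwardly from~\cite{Guptaetal2008}'' and gives no further argument. Your outline --- balls as level regions, the local product decomposition of $\nu$ into Lebesgue along $W^u$ times the Ahlfors-regular $(\tfrac12,\tfrac12)$-Bernoulli measure on the transverse Cantor set, yielding $\nu(B(p_M,r))\asymp r^{\dim_H(\Lambda)}$, together with $D_2(u_n),D'(u_n)$ imported from~\cite{Guptaetal2008} --- is precisely the content of that reference specialised to the solenoid, so the approaches coincide.
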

More interesting considerations arise for the observable \eqref{obsabcd}.  As
a simple case, consider first the degenerate solenoid with $\lambda=0$ and
take a planar observable $\phi:=ax+by+d$, thus reducing the problem to the
$(x,y)$-plane. In this case we have the trivial dimension formula
$\dim_H(\Lambda)=1$ since $\Lambda$ is a circle. However, for computing the
tail index we lose a factor of $1/2$ due to the geometry of the level set.
Indeed, level sets are straight lines within the $(x,y)$-plane, and at the
extremal point $\tilde{p}=(\tilde{x},\tilde{y})$ the critical level set
$L(\tilde{u})$ is tangent to $\Lambda$. Since the tangency is quadratic, we
find that
\begin{equation}
  \label{eq:uplussolenoid}
  \nu(L^{+}(\tilde{u}-\epsilon))=m_{\gamma^{u}}\{\gamma^{u}(\tilde{p})\cap
  L^{+}(\tilde{u}-\epsilon)\}=\mathcal{O}(\sqrt{\epsilon}).  
\end{equation}
Here $\gamma^{u}(\tilde{p})$ is the unstable manifold through $\tilde{p}$
(i.e. it is the unit circle), and $m_{\gamma^{u}}$ is the one-dimensional
conditional (Lebesgue) measure on $\gamma^{u}(\tilde{p})$. Hence
\begin{equation*}
  %\label{eq:tail-solenoid-abcout}
  -\frac1\xi=\dim_H(\Lambda)-\frac12=\frac12.
\end{equation*}
The mechanism described above is similar to that illustrated for Thom's map in
\figref{thom-illustrationalpha}~(B), leading to
formula~\eqref{tail-thom-alphaout1}: indeed, there we have
$\dim_H(\Lambda)=2$, yielding the value $3/2$ for the tail $-1/\xi$.

For $\lambda>0$, the attractor has more complicated geometry and is locally
the product of a Cantor set with an interval~\cite{Hasselblatt:04}. Planar
cross sections that intersect $\Lambda$ transversely form a Cantor set of
dimension $\dim_H(\Lambda)-1=-\log2/\log\lambda$. To calculate
$\nu(L^{+}(\tilde{u}-\epsilon))$ we would like to repeat the calculation above
using equation \eqref{uplussolenoid}, but now the set of unstable leaves that
intersect $L^{+}(\tilde{u}-\epsilon)$ form a Cantor set (for each
$\epsilon>0$). The extremal point $\tilde{p}$ where $\phi(p)$ attains its
maximum on $\Lambda$ forms a \emph{tip} of $\Lambda$ relative to
$L(\tilde{u})$.  Such a tip corresponds to a point on $\tilde{p}\in\Lambda$
whose unstable segment $\gamma^u(\tilde{p})$ is tangent to $L(\tilde{u})$ at
$\tilde{p}$, and moreover normal to $\nabla\phi(\tilde{p})$ at $\tilde{p}$.
Given $\epsilon>0$, we (typically) expect to find a Cantor set of values
$t\in[0,\epsilon]$ for which the level sets $L(\tilde{u}-t)$ are tangent to
some unstable segment $\gamma^{u}\subset\Lambda$.  For other values of $t$
these level sets cross the attractor transversally. Given (fixed)
$\epsilon_0>0$ we can define the \emph{tip set}
$\Gamma\equiv\Gamma(\epsilon_0)\subset\Lambda$ as follows: let
$T_{p}\gamma^u(p)$ be the tangent space to $\gamma^u$ at $p$. Then we define
\begin{equation}\label{eq:tipset}
\Gamma=\{p\in L^{+}(\tilde{u}-\epsilon_0)\cap\Lambda:
T_{p}\gamma^{u}(p)\cdot\nabla\phi(p)=0\}.
\end{equation}
This tip set plays a role in proving the following result, which in turn 
provides us with information on the form of the tail index $\xi$.
\begin{prop}\label{prop:solenoid} 
  Suppose that $g_{\lambda}$ is the map \eqref{solenoid} and
  $\phi=\phi_{abcd}$. Define $\tau(\epsilon)=\nu\{\phi(p)\geq
  \tilde{u}-\epsilon\}$.  If $\mathrm{dim}_{H}(\Gamma)<1$, then modulo a zero
  measure set of values $(a,b,c,d)$, $\tau(\epsilon)$ is regularly varying
  with index $1/2+\mathrm{dim}_{H}(\Gamma)$ as $\epsilon\to 0$.
\end{prop}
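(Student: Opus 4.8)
The plan is to estimate $\tau(\epsilon)=\nu\{\phi(p)\ge\tilde u-\epsilon\}$ by disintegrating the SRB measure $\nu$ along the unstable foliation. Write $\nu=\int m_{\gamma^u}\,d\hat\nu(\gamma^u)$, where $m_{\gamma^u}$ is the conditional (one-dimensional Lebesgue) measure on the unstable leaf $\gamma^u$ and $\hat\nu$ is the transverse (quotient) measure on the space of leaves; since planar cross-sections of $\Lambda$ transverse to the unstable direction are Cantor sets of dimension $\dim_H(\Lambda)-1$, the measure $\hat\nu$ is (up to bounded density factors coming from the local product structure and the Hölder continuity of the unstable distribution) comparable to a Hausdorff measure of that dimension on the transversal. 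Then
\begin{equation}
\label{eq:disint}
\tau(\epsilon)=\int m_{\gamma^u}\bigl(\gamma^u\cap L^{+}(\tilde u-\epsilon)\bigr)\,d\hat\nu(\gamma^u).
\end{equation}
The contribution of a single leaf is governed by the quadratic-tangency picture already used for Thom's map in \figref{thom-illustrationalpha}(B) and in~\eqref{uplussolenoid}: for a leaf whose tangency with the foliation $\{L(u)\}$ occurs at parameter value $\tilde u-t$ with $0\le t\le\epsilon$, one has $m_{\gamma^u}(\gamma^u\cap L^{+}(\tilde u-\epsilon))=\mathcal O(\sqrt{\epsilon-t})$, while leaves that meet $L^{+}(\tilde u-\epsilon)$ only transversally contribute $\mathcal O(\epsilon)$ per leaf but there are comparatively few of them. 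The leaves that do achieve a tangency inside the window $[\tilde u-\epsilon,\tilde u]$ are exactly those passing through the tip set $\Gamma=\Gamma(\epsilon)$ of~\eqref{eq:tipset} with $\epsilon_0=\epsilon$.

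The key geometric step is to quantify how $\hat\nu(\Gamma(\epsilon))$ scales with $\epsilon$. Parametrising leaves by the value $t=t(\gamma^u)\in[0,\epsilon_0]$ at which $L(\tilde u-t)$ is tangent to $\gamma^u$, I would argue that $t$ is, generically in $(a,b,c,d)$, a bi-Lipschitz function of the transverse coordinate along the tip set — this is where the ``modulo a zero-measure set of $(a,b,c,d)$'' hypothesis enters, ruling out degenerate alignments between $\nabla\phi$ and the Cantor transversal — so that the set of leaves with tangency value in $[0,\epsilon]$ has transverse measure $\hat\nu\{t\le\epsilon\}\asymp\epsilon^{\dim_H(\Gamma)}$, using $\dim_H(\Gamma)<1$ to identify $\dim_H(\Gamma)$ as the exponent of the regularly varying transverse Hausdorff measure on the tip set. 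Substituting the per-leaf bound $\mathcal O(\sqrt{\epsilon-t})$ into~\eqref{eq:disint} and integrating,
\begin{equation}
\tau(\epsilon)\asymp\int_{0}^{\epsilon}\sqrt{\epsilon-t}\;d\bigl(\hat\nu\{t'\le t\}\bigr)\asymp\int_{0}^{\epsilon}\sqrt{\epsilon-t}\;t^{\dim_H(\Gamma)-1}\,dt\asymp\epsilon^{1/2+\dim_H(\Gamma)},
\end{equation}
by the Beta-integral/Karamata computation; one also checks the transversal leaves contribute only $\mathcal O(\epsilon)$, which is of lower order since $1/2+\dim_H(\Gamma)<3/2$ but could compete if $\dim_H(\Gamma)>1/2$ — so this comparison must be done carefully, and in fact one wants the sharper statement that the tangential leaves dominate, which is where regular variation (rather than a clean power law) is the honest conclusion. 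Finally one invokes Karamata's theorem to pass from the scaling of $\hat\nu\{t\le\epsilon\}$ to regular variation of $\tau(\epsilon)$ with the stated index.

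The main obstacle is the second step: showing that the transverse measure $\hat\nu$ restricted to the tip set $\Gamma$ is regularly varying with index exactly $\dim_H(\Gamma)$ when parametrised by the tangency depth $t$. This requires controlling the interplay between the self-similar (Cantor) structure of the attractor in the stable direction, the Hölder-but-not-smooth regularity of the unstable distribution, and the linear level sets of $\phi$; the genericity in $(a,b,c,d)$ is used precisely to guarantee that the foliation $\{L(u)\}$ meets the tip set ``transversally enough'' that tangency depth and a transverse Hausdorff coordinate are comparable. I expect the remaining ingredients — the per-leaf $\sqrt{\,\cdot\,}$ estimate from quadratic tangency, the disintegration, and the Karamata integration — to be routine, modulo uniform (in the leaf) control of the constants in the $\mathcal O(\sqrt{\epsilon})$ bound, which follows from compactness of $\Lambda$ and uniform hyperbolicity of $g_\lambda$.
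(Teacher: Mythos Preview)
Your approach is essentially the paper's: disintegrate $\nu$ along unstable leaves, use the per-leaf estimate $m_{\gamma^u}\bigl(\gamma^u\cap L^{+}(\tilde u-\epsilon)\bigr)=\mathcal{O}(\sqrt{\epsilon-t})$ from quadratic tangency, parametrise the relevant leaves by the tangency depth $t$ via the tip set $\Gamma$, and integrate. Two points of divergence are worth noting. First, the mechanism you invoke for the scaling $\hat\nu\{t\le\epsilon\}\asymp\epsilon^{\dim_H(\Gamma)}$ is not quite right: a bi-Lipschitz dependence of $t$ on the transverse coordinate will essentially never hold for a Cantor transversal, generic $(a,b,c,d)$ or not. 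The paper instead observes that passing from $\Gamma$ to $t$ is an orthogonal projection $\pi$ onto the line in the direction $\nabla\phi$, and then invokes the Marstrand--Falconer projection theorem \cite{Falconer}: since $\dim_H(\Gamma)<1$, for Lebesgue-a.e.\ direction (hence for a.e.\ $(a,b,c,d)$) the projection preserves the Hausdorff dimension of $\Gamma$ and the local dimension of the conditioned measure $\nu_\Gamma$. This is the precise content of the ``modulo a zero measure set'' clause, and it replaces your bi-Lipschitz claim. Second, the paper avoids your Beta-integral/Karamata computation (and the attendant worry about a nonexistent density $t^{\dim_H(\Gamma)-1}$) by a cruder two-sided bound: $m_{\gamma^u}\le C\sqrt{\epsilon}$ uniformly for the upper bound, and $m_{\gamma^u}\ge C\sqrt{\epsilon}$ on the sub-range of $t$ bounded away from $\epsilon$ for the lower bound, yielding $\tau(\epsilon)\asymp\sqrt{\epsilon}\cdot\epsilon^{\dim_H(\Gamma)}$ directly. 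Your concern about ``transversal-only'' leaves is moot here: near $\tilde p$, every leaf entering $L^{+}(\tilde u-\epsilon)$ does so through a local maximum of $\phi|_{\gamma^u}$, i.e.\ through a point of $\Gamma$.
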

We give a proof below. Based on this proposition we conjecture that
\begin{equation}
  \label{eq:tail-solenoid-abcout}
  -\frac{1}{\xi}=\mathrm{dim}_{H}(\Lambda)-\frac{1}{2}=\frac{1}{2}
  +\frac{\log 2}{\log\lambda^{-1}}.
\end{equation}
We outline the main technical challenges that would need to be overcome to
prove this conjecture.  Firstly, conditions $D(u_n)$ and $D'(u_n)$ should be
checked. We believe that this should follow from \cite{Guptaetal2008}, however
the proof would be non-standard due to the level set geometry. Secondly, we
would claim that $\mathrm{dim}_{H}(\Gamma)=\mathrm{dim}_{H}(\Lambda)-1$. The
proof of this would utilise the techniques used in \cite{Hasselblatt:04} to
analyse the regularity of the holonomy map between stable disks. In
particular, the Authors of \cite{Hasselblatt:04} show that the holonomy map is
Lipschitz on a set of full dimension. However, it does not automatically
follow that the holonomy map between $\Gamma$ and $\Lambda\cap D$ is Lipschitz
(for a disk $D$ transverse of $\Lambda$), but we believe that it is for
general planar observations.

\begin{proof}[Proof of Proposition \ref{prop:solenoid}]
For each $\epsilon<\epsilon_0$, consider the set 
$\Gamma(\epsilon)\subset\Gamma\cap L^{+}(\tilde{u}-\epsilon)$. Then
for each $p\in\Gamma(\epsilon)$, there exists $t<\epsilon$ such that 
$\gamma^{u}(p)$ is tangent to $L(\tilde{u}-t)$.
If the observable $\phi$ takes the form of \eqref{obsabcd},
then by the same calculation as~\eqref{uplussolenoid} we obtain
\begin{equation}\label{eq:leaf-measure}
m_{\gamma^{u}}\{\gamma^{u}(p)\cap
  L^{+}(\tilde{u}-\epsilon)\}=\mathcal{O}(\sqrt{\epsilon-t}).
\end{equation}
Thus to compute $\nu(L^{+}(\tilde{u}-\epsilon))$, we
integrate~\eqref{leaf-measure} over all relevant $t<\epsilon$ using the
measure $\nu_{\Gamma}$, which is the measure $\nu$ conditioned on $\Gamma$.
Provided $\mathrm{dim}_{H}(\Gamma)<1$, the projection of $\Gamma$ onto the
line in the direction of $\nabla\phi$ is also a Cantor set of the same
dimension for typical (full volume measure) $(a,b,c,d)$, see \cite{Falconer}.
Thus the set of values $t$ corresponding to when $L(\tilde{u}-t)$ is tangent
to $\Gamma$ form a Cantor set of dimension $\mathrm{dim}_{H}(\Gamma)$.  If
$\pi$ is the projection from $\Gamma$ onto a line in the direction of
$\nabla\phi$, then the projected measure $\pi_{*}\nu_{\Gamma}$ has local
dimension $\mathrm{dim}_{H}(\Gamma)$ for typical $(a,b,c,d)$.  We have
\begin{equation}
\nu(L^{+}(\tilde{u}-\epsilon))=\int_{0}^{\epsilon}\int_{L^{+}(\tilde{u}-\epsilon)}
dm_{\gamma^{u}}d\nu_{\Gamma}.
\end{equation}
To estimate this integral we bound it above via the inequality
$m_{\gamma^u}(\gamma^u\cap L^{+}(\tilde{u}-\epsilon)) \leq C\sqrt{\epsilon}$,
and bound it below using the fact that for $t>\epsilon/2$,
$m_{\gamma^u}(\gamma^u\cap L^{+}(\tilde{u}-\epsilon)) \geq
C\sqrt{\epsilon}$. Here $C>0$ is a uniform constant.  Putting this together we
obtain for typical $(a,b,c,d)$
\begin{equation}
\nu(L^{+}(\tilde{u}-\epsilon))=\int_{0}^{\epsilon}\int_{L^{+}(\tilde{u}-\epsilon)}
dm_{\gamma^{u}}d\nu_{\Gamma}\approx \sqrt\epsilon\cdot
\epsilon^{\mathrm{dim}_{H}(\Gamma)}
=\epsilon^{1/2+\mathrm{dim}_{H}(\Gamma)}.
\end{equation}
\end{proof}

\subsection{Numerical results}
\label{sec:solenoid-numerics}
We now examine the convergence of the numerically estimated values to the
theoretically expected ones. For the numerical simulations, we rewrite
observable~\eqref{obsabcd} in two forms
$\phi_{\theta,1},\phi_{\theta,2}:\mR^3\to\mR$ (form~\eqref{obsabcd} is
recovered for suitable values of $a,b,c,d$):
\begin{align}
  \label{eq:obsthetaxy}
  \phi_{\theta,1}(x,y,z)&=\cos(2\pi \theta) (x -x_0) +\sin(2 \pi \theta) (y-y_0) ,\\
  \label{eq:obsthetaxz}
  \phi_{\theta,2}(x,y,z)&=\cos(2\pi \theta) (x -x_0) +\sin(2 \pi \theta) (z-z_0) .
\end{align}
The level sets associated to~\eqref{obsthetaxy} and~\eqref{obsthetaxz} are
planes orthogonal to $(\cos(2\pi \theta), \sin(2 \pi \theta), 0)$ and
$(\cos(2\pi \theta),0, \sin(2 \pi \theta))$, respectively.
\figref{blocklens-soli-theta} shows the dependence of the estimates of $\xi$
with respect to the block length $N_{blocklen}$ for both~\eqref{obsthetaxy}
and~\eqref{obsthetaxz} at $\theta=0.5$. \figref{blocklens-soli-theta} suggests
that the block length $N_{blocklen}=10^4$ is sufficient to get an estimate
coherent with the theoretical value~\eqref{tail-solenoid-abcout} for
observable~\eqref{obsthetaxy}, whereas the same value is not sufficient for
observable~\eqref{obsthetaxz}.  This is illustrated in
\figref{xis-solenoid-theta1} for a range of values of $\theta$: reliable
estimation is obtained with block length $N_{blocklen}=10^4$ for
observable~\eqref{obsthetaxy}~(panel~(A)) but not for
observable~\eqref{obsthetaxz}~(panel~(B)), for which $N_{blocklen}=10^6$ seems
to suffice~(panel~(C)).

In summary, the minimum block length required for (approximate) convergence to
the theoretical value may vary strongly with the location within the attractor
of the extremal point $\tilde{p}$ in~\eqref{uplussolenoid}, that is with the
relative position of the attractor and the level sets. Also, the minimum block
length may depend on the dimensionality of the attractor. Numerical
experiments suggest that reliable estimation is more difficult when the
dimensionality of the attractor is smaller. \figref{xis-solenoid-lambda}
indeed shows better agreement with the prediction
of~\eqref{tail-solenoid-abcout} for the larger values of $\lambda$ which also
correspond to a larger dimension according to~\eqref{dimsolenoid}.

Lastly, we consider observable~\eqref{obsalpha3}. As we did in
\figref{thom-blocklens-b}, we illustrate the effect of the point $p_M$
``dropping out'' of the attractor $\Lambda$. To achieve this, we iterate the
solenoid map starting from an arbitrarily chosen initial condition.  After
discarding a transient of $10^5$ iterates, we regard the final point $p_M^0$
of the orbit as being generic with respect to the Sinai-Ruelle-Bowen measure
on $\Lambda$.  \figref{solenoid-blocklens-alpha}~(A) shows the sensitivity of
the estimates of $\xi$ with respect to block length for
observable~\eqref{obsalpha3} when $p_M$ is equal to $p_M^0$ as obtained
above. The estimates display strong oscillations around the theoretical value
and barely seem to settle for very large block lengths $N_{blocklen}>10^7$.
We return to this problem in \secref{henon}.

We then choose $p_M$ as a perturbation of point $p_M^0$ in the radial
direction in $\mR^3$: namely we set $p_M=p_M^t=(1+t)p_M^0$. By dissipativity
of the solenoid map, we expect $p_M^t\not\in\Lambda$ with probability 1 when
$t\neq0$.  We find out that when $t$ is sufficiently large
(\figref{solenoid-blocklens-alpha}~B)), the estimates of $\xi$ converge to the
theoretically expected value~\eqref{tail-solenoid-abcout} already for block
lengths of 1000.  However, when $t$ is small
(\figref{solenoid-blocklens-alpha}~C)) the estimates are closer to the value
attained within the attractor~\eqref{tail-solenoid-alphain} for small block
lengths, whereas convergence to the theoretically expected
value~\eqref{tail-solenoid-abcout} takes place for $N_{blocklen}$ larger than
about $10^5$.

\section{Non-uniformly hyperbolic examples: the H\'enon and Lozi maps}
\label{sec:henon}

We here consider the H\'enon map~\cite{benedicks-young,Chazottes:10}
\begin{equation}
  \label{eq:henon}
  h_{a,b}:\mR^2\to\mR^2,\quad h_{a,b}(x,y)=(1-ax^2+y,bx),
\end{equation}
for the classical parameter values $(a,b)=(1.4,0.3)$ and the Lozi
map~\cite{Collet:84,Young:85}
\begin{equation}
  \label{eq:lozi}
  l_{a,b}:\mR^2\to\mR^2,\quad l_{a,b}(x,y)=(1-a|x|+y,bx),
\end{equation}
for $(a,b)=(1.7,0.1)$, under the observables
\begin{align}
  \label{eq:obsalpha4}
  \phi_{\alpha}(x,y)&=-\dist(p,p_M)^\alpha,\quad\text{with $p=(x,y)\in\mR^2$,}\\
  \label{eq:obstheta}
  \phi_{\theta}(x,y)&=x\cos(2\pi\theta)+y\sin(2\pi\theta),
\end{align}
where $\alpha>0$ and $\theta\in[0,2\pi]$ are parameters and $p_M$ is a point
in $\mR^2$. Following the discussion for the solenoid map, we could conjecture
that
\begin{align}
  \label{eq:tail-henon-alphain}
  -\frac1\xi&=\frac{\dim_H(\Lambda)}{\alpha}\quad\text{for $\phi=\phi_{\alpha}$
    and $p_M\in\Lambda$;}\\
  \label{eq:tail-henon-thetaout}
  -\frac1\xi&=\dim_H(\Lambda)-\frac12\quad\text{for $\phi=\phi_{\theta}$.}
\end{align}
The numerical verification of these conjectures turns out to be rather
problematic. First of all for a given system it may be very hard or even
unfeasible to compute an estimate of the Hausdorff dimension.  For this
reason, we will use the Lyapunov (Kaplan-Yorke) dimension of the H\'enon or
Lozi attractor instead of the Hausdorff dimension appearing
in~\eqref{tail-henon-thetaout}-\eqref{tail-henon-alphain}. The Lyapunov
dimension of an attractor $\Lambda\subset\R^n$ of a system with $\R^n$ as
phase space is defined as
\begin{equation}
  \label{eq:dimlyap}
  \dim_L(\Lambda) = k+ \frac{\sum_{j=1}^k\chi_{j}}{-\chi_{k+1}},
\end{equation}
where $\chi_1\ge\chi_2\ge\ldots\ge\chi_n$ are the Lyapunov exponents and $k$
is the maximum index for which $ \sum_{j=1}^k\chi_{j}\ge0.$ It is believed
that the Lyapunov dimension forms an upper bound for the Hausdorff dimension
under general conditions~\cite{GP1983,Kap1984}.

For the H\'enon map under observable~\eqref{obsalpha}, and in view of the
results of a recent paper~\cite{Chazottes:10}, it is expected that
formula~\eqref{tail-henon-alphain} holds for so-called
\emph{Benedicks-Carleson parameter values}~\cite{FreitasandFreitas2008a}. Such
parameter values, however, are obtained by a perturbative argument near
$(a,b)=(2,0)$, where the bound on the smallness of $b$ is not
explicit. Moreover, the parameter exclusion methods used to define the
Benedicks-Carleson parameter values are not constructive. For these reasons,
it is not possible to say whether Benedicks-Carleson behaviour is also
attained at the ``classical'' parameter values $(a,b)=(1.4,0.3)$. Despite
this,~\eqref{tail-henon-alphain} forms our best guess for the value of $\xi$.

For planar observables, we again study the \emph{tip set}
$\Gamma\subset\Lambda$ as defined for the Solenoid map, namely, for fixed
$\epsilon_0>0$ and $p=(x,y)$, let
\begin{equation}
\Gamma=\{p\in L^{+}(\tilde{u}-\epsilon_0)\cap\Lambda:
T_{p}\gamma^{u}(p)\cdot\nabla\phi(p)=0\},
\end{equation}
and for each $\epsilon<\epsilon_0$, consider the set 
$\Gamma(\epsilon)\subset\Gamma\cap L^{+}(\tilde{u}-\epsilon)$. Then
for each $p\in\Gamma(\epsilon)$, there exists $t<\epsilon$ such that 
$\gamma^{u}(p)$ is tangent to $L^{+}(\tilde{u}-t)$. For the planar observable
$\phi$ we would expect to obtain (as with the solenoid):
\begin{equation}\label{eq:leaf-measure-henon}
m_{\gamma^u}\{\gamma^{u}(p)\cap
  L^{+}(\tilde{u}-\epsilon)\}=\mathcal{O}(\sqrt{\epsilon-t}),
\end{equation}
where $m_{\gamma^{u}}$ is the conditional (Lebesgue) measure on the
one-dimensional unstable manifold.  However in this calculation we have
assumed that the tangency between $\gamma^{u}(p)$ and $L(\tilde{u}-t)$ is
quadratic, and that the unstable segment is sufficiently long so as to cross
$L(\tilde{u}-\epsilon)$ from end to end.  For the H\'enon map both of these
conditions can fail. In particular, the H\'enon attractor admits a critical
set of folds that correspond to points where the attractor curvature is
large. More precisely the critical set is formed by homoclinic tangency points
between stable and unstable manifolds.  This set has zero measure, but it is
dense in the attractor. Furthermore the attractor has complicated geometry,
where local stable/unstable manifolds can fold back and forth upon
themselves. However, the regions that correspond to these folds (of high
curvature) occupy a set of small measure. See~\cite{WangYoung} and references
therein for a more detailed discussion.

To compute the tail index, we conjecture to have the following formula:
\begin{equation}\label{tail:henon}
-\frac{1}{\xi}=\mathrm{dim}_H(\nu)-\frac{1}{2}
\end{equation}
where $\nu$ is the SRB measure for the H\'enon map (at Benedicks-Carleson
parameters).  This would follow from the estimate:
\begin{equation}
\nu(L^{+}(\tilde{u}-\epsilon))=\int_{0}^{\epsilon}\int_{L^{+}(\tilde{u}-\epsilon)}
dm_{\gamma^{u}}d\nu_{\Gamma}\approx\sqrt\epsilon\cdot\epsilon^{\mathrm{dim}_H(\Gamma)},
\end{equation}
where the factor of $\sqrt\epsilon$ comes from equation
(\ref{eq:leaf-measure}). To obtain equation (\ref{tail:henon}), we would need
to show that $\mathrm{dim}_{H}(\Gamma) =\mathrm{dim}_{H}(\nu)-1$.  This is
perhaps harder to verify and it will depend on the regularity of the holonomy
map taken along unstable leaves.  Finally we would project this set onto a
line in the direction of $\nabla\phi(p)$, and typically the projection would
preserve the dimension.

\figref{henon-blocklens-alpha} shows the dependence of the estimates of $\xi$
with respect to the block length $N_{blocklen}$ for the H\'enon map under the
observable~\eqref{obsalpha}. We see that the estimates exhibit strong
oscillations around the value predicted by~\eqref{tail-henon-alphain} even for
fairly large block lengths.  \figref{henon-blocklens-theta} shows the
dependence of the estimates of $\xi$ with respect to the block length
$N_{blocklen}$ for the H\'enon map under the observables~\eqref{obstheta} at
$\theta=0,0.5$.  The horizontal lines represent the values predicted
by~\eqref{tail-henon-thetaout}, where, as above, we have used the of Lyapunov
(Kaplan-Yorke) dimension of the H\'enon attractor instead of the Hausdorff
dimension. We see that block lengths of at least $10^4$ are required for the
estimation to reach the neighbourhood of the value predicted
by~\eqref{tail-henon-thetaout}. However, the estimates still exhibit
substantial oscillations around the predicted values for block lengths as
large as $10^7$, although both the variability of the individual point
estimates and the estimation uncertainty are here much less pronounced than in
\figref{henon-blocklens-alpha}.

We had already seen the above behaviour in the solenoid map: namely, the
estimates in panel~(A) of \figref{solenoid-blocklens-alpha} also exhibit
larger variance and variability than those in panel~(C). In that case,
however, the theoretical value of panel~(A) is not conjectural, since it
follows from the theory discussed in \secref{solenoid} for observables such
as~\eqref{obsalpha3} when the point $p_M$ belongs to the attractor.

Hence we do not interpret the variability in
Figures~\ref{fig:henon-blocklens-alpha} and~\ref{fig:henon-blocklens-theta} as
a dismissal of~\eqref{tail-henon-thetaout}-\eqref{tail-henon-alphain}.
Rather, we claim that this behaviour is due to a problematic aspect of the
numerical estimation. To illustrate our claim, we more carefully examine the
estimates of the GEV distribution obtained for block lengths of $5000$ and
$10000$, for observable~\eqref{obstheta} with $\theta=0$. In this case, the
observable simply coincides with the projection on the $x$-axis: this is very
useful for the visualisation.

The kernel-smoothed density of the block maxima show various peaks (panel~(A1)
in \figref{attractor}). A particularly pronounced peak occurs nearby
$x=1.2727$. Examination of the points on the time series of the block maxima
(panel~(B1)) and of the points on the H\'enon attractor corresponding to the
block maxima (panel~(C1)) reveals that this peak is associated to a pair of
branches of the attractor that exhibit a turning point slightly above 1.2727.
This peak corresponds to a ``corner'' in the quantile-quantile plot
(panel~(D1)) comparing the empirical distribution of the block maxima to
fitted GEV distribution. For values of $x$ at the left of the peak, the
empirical distribution of the block maxima displays a strong deviation from
the fitted GEV distribution.

When the block length is increased to $10^4$ (right column of
\figref{attractor}), the kernel-smoothed density of the block maxima drops to
almost zero at the left of the peak (panel~(A2)). Indeed, the portion of the
H\'enon attractor corresponding to the block maxima (panel~(C2)) does \emph{no
  longer} include the two leftmost branches which were found in
panel~(C1). Moreover, a much smaller fraction of points now belongs to the
branch of the attractor having a turning point at 1.2727. This also
corresponds to the peak in the density being lower in panel~(A2) than in
panel~(A1). More importantly, this correspond to a much better overall fit to
the GEV distribution: as illustrated by the quantile-quantile plot in
panel~(D2), there still is some deviation at the lower tail, but it is orders
of magnitude smaller than in panel~(D1). 

We believe that this is the explanation for the poor convergence to the
theoretical estimates which we have found in \figref{henon-blocklens-theta},
also see~\cite{NBN06} for a related discussion.
The fractal structure portrayed in panels~(D1-2) of \figref{attractor} is
indeed present at all spatial scales near the extremal point
$\tilde{p}=(\tilde{x},\tilde{y})$ on the H\'enon attractor for which
observable~\eqref{obstheta} with $\theta=0$ is maximised. As blocks of
increasing lengths are used, increasingly many attractor branches are
discarded. Near the block length values for which one major branch is
discarded, a better agreement is obtained between the sample of block maxima
and the limiting GEV distribution. These are the block length values for which
we expect the estimated value of $\xi$ to lie closer to the theoretical
prediction in panel~(A) of \figref{henon-blocklens-theta}.

The effect of the variability in the estimates is illustrated in
\figref{xis-henon-theta}, where we show estimates of $\xi$ for
observable~\eqref{tail-henon-thetaout} with several values of $\theta$ and
with four block lengths.  For $N_{blocklen}=10^3$, the estimates vary
substantially across the range of values of $\theta$
(\figref{xis-henon-theta}~(A)).  Varying $\theta$ from 0 to 1 amounts to
rotate the level sets of the observable~\eqref{tail-henon-thetaout}, which are
straight lines. Hence, this amounts to slide the extremal point $\tilde{p}$
for which observable~\eqref{obstheta} is maximised on the H\'enon attractor
(compare with~\eqref{levelsets}).  The horizontal \emph{plateau} in
\figref{xis-henon-theta}~(A), occurring approximately for $\theta$ between
$[0.5,0.75]$, corresponds to the extremal point $\tilde{p}$ belonging to the
leftmost tip-like portions of the H\'enon attractor: large variations in
$\theta$ in this range correspond to small variations in $\tilde{p}$.

For block lengths of $N_{blocklen}=10^4$, (\figref{xis-henon-theta}~(B)), the
estimates are more uniform across $\theta$. The same holds for
$N_{blocklen}=10^5$ and $10^6$ and we see a definite bias in the latter case,
which has the same sign and approximately the same value for all $\theta$.
Roughly speaking, choosing block lengths of at least $N_{blocklen}=10^4$
ensures that we only select block maxima in branches of the H\'enon attractor
which are close to its outer ``peel'', compare with
\figref{attractor}~(C1-C2). However, this does not necessarily guarantee
accurate estimation of the limit value of $\xi$, for the reason illustrated in
\figref{attractor}~(D1-D2). 

We argue that the same explanation holds for the variability of the estimates
in panel~(A) of \figref{solenoid-blocklens-alpha} and for the even poorer
convergence in \figref{henon-blocklens-alpha}.  Plots similar to
\figref{attractor} for the latter case suggest that as block length is
increased, the probability mass that is lost at the lower tail of the
empirical distribution of the block maxima is redistributed amongst other
attractor branches which lie closer to the point $p_M^0$.  To illustrate this
process we chose observable~\eqref{obstheta} for ease of visualisation.

Similar considerations hold for the Lozi map~\eqref{lozi}.
\figref{lozi-blocklens-alpha} shows the sensitivity of the numerical estimates
of $\xi$ with respect to the block length used to compute the maxima for
observable~\eqref{obsalpha4}. For the chosen parameter values, we obtain the
estimate $\dim_L(\Lambda)=1.185$, in good agreement with the bounds $1.176669
< \dim_H(\Lambda) < 1.247848$ on the Hausdorff dimension of the Lozi attractor
$\Lambda$ proved in~\cite{Ishii:97}. When the point $p_M$ is chosen in the
attractor of the Lozi map, the estimates display strong oscillations around
the value predicted by the theory (\figref{lozi-blocklens-alpha} panel~(A)),
as in \figref{henon-blocklens-alpha}. We then choose $p_M=(0.2,0.01)$: this
point does not lie on the attractor of the Lozi map, but the nearest point
$(\tilde{x},\tilde{y})$ on the attractor belongs to one of the straight
portions.  Also in this case we observe oscillations around the theoretically
expected value (\figref{lozi-blocklens-alpha} panel~(B)), like
in~\figref{henon-blocklens-theta}.

\section{The Lorenz63 and Lorenz84 flows}
\label{sec:lorenz}

The theoretical and numerical machinery developed in the previous sections is
now applied to two paradigmatic ordinary differential equations, both derived
and studied by Ed Lorenz.  We first of all consider the model
of~\cite{Lorenz:63}:
\begin{equation}
  \begin{aligned}
    \label{eq:lorenz63}
  \dot{x} &= \sigma(y-x),r\\
  \dot{y} &= x(\rho-z) - y,   \\
  \dot{z} &= xy-\beta z,
  \end{aligned}
\end{equation}
derived from the Rayleigh equations for convection in a fluid layer between
two plates. Here $\sigma$ is the Prandtl and $\rho$ the Rayleigh number.  We
refer to this as the Lorenz63 model and fix $\sigma=10$, $\beta=8/3$ and
$\rho=28$, which is a fairly common choice in the vast literature on the
Lorenz63 system, see e.g.~\cite{sparrow,Tucker1999,Araujo:09}. The statistics
of extremes has been previously analysed in~\cite{VHF:09}, who found
smooth-like variation of the GEV parameters with respect to changes in the
parameter $\rho$ within a suitable range.

We also study a three-dimensional system proposed by Lorenz in
1984~\cite{Lorenz:84}:
\begin{equation}
  \begin{aligned}
    \label{eq:lorenz84}
    \dot{x} & = -ax - y^2 - z^2 + aF, \\
    \dot{y} & = -y + xy - bxz + G, \\
    \dot{z} & = -z + bxy + xz.
  \end{aligned}
\end{equation}
This is derived by a Galerkin projection from an infinite dimensional model
for the atmospheric circulation at mid-latitudes in the Northern Hemisphere.
The variable $x$ is the strength of the symmetric, globally averaged westerly
wind current.  The variables $y$ and $z$ are the strength of cosine and sine
phases of a chain of superposed waves transporting heat poleward.  The terms
in $b$ represent displacement of the waves due to interaction with the
westerly wind.  The coefficient $a$, if less than one, allows the westerly
wind current to damp less rapidly than the waves.  The time scale of $t$
corresponds to about 5 days. The terms in $F$ and $G$ are thermal forcings:
$F$ represents the \emph{symmetric} cross-latitude heating contrast and $G$
accounts for the \emph{asymmetric} heating contrast between oceans and
continents. System~\eqref{lorenz84} has been used in both climatological
studies, for example by coupling it with a low-dimensional model for ocean
dynamics~\cite{vanVeen:01}. Several works have examined its bifurcations,
mainly in the $(F,G)$-parameter
plane~\cite{Shilnikov:95,Masoller:92,BSV1,vanVeen:03}. Almost nothing is known
theoretically regarding the structure of its strange attractors. As in the
above references, we fix $a=0.25$ and $b=4$ and consider the chaotic dynamics
occurring at $(F,G)=(8,1)$.

We analyse time series generated by observables computed along orbits of these
flows, sampled every $\Delta t$ time units. We fix $\Delta t=0.05$ time units
for the Lorenz63 and $\Delta t=0.1$ for the Lorenz84 model. We consider the
two observables
\begin{align}
  \label{eq:obsroot}
  \phi_{1}(x,y)&=-\dist(p,p_M),\quad\text{with $p=(x,y,z)\in\mR^3$,}\\
  \label{eq:obsflat}
  \phi_{2}(x,y,z)&=x.
\end{align}
Observable $\phi_2$ has a clear physical meaning in both models:
for~\eqref{lorenz63}, the variable $x$ represents the intensity of the
convection, whereas in~\eqref{lorenz84} the variable $x$ represents the
strength of the westerly wind current. As in the previous sections, we examine
the sensitivity of the numerical estimates of $\xi$ with respect to the block
length used to compute the maxima.

We first consider the Lorenz63 system~\eqref{lorenz63}. It will be useful to
recall some geometrical facts of the Poincar\'e map to $z=\textrm{constant}$
sections. Given the planar sections $\Sigma=\{(x,y,1):|x|,|y|\leq 1\}$, and
$\Sigma'=\{(1,y,z):|y|,|z|\leq 1\}$, the map $P:\Sigma\to\Sigma$ decomposes as
$P=P_2\circ P_1$, where $P_1:\Sigma\to\Sigma'$ and $P_2:\Sigma'\to\Sigma$.  To
describe the form of $P$, let $\beta=|\lambda_{s}|/\lambda_u$,
$\beta'=|\lambda_{ss}|/\lambda_u$, where $\lambda_s$, $\lambda_{ss}$ and
$\lambda_{u}$ are the eigenvalues of the linearised Lorenz63 flow at the
origin, with $\lambda_s=-8/3$, $\lambda_{ss}=-22.83$ and $\lambda_{u}=11.83$
for our choice of parameters.  Then it can be shown that
$P_1(x,y,1)=(1,x^{\beta'}y,x^{\beta})$, and $P_2$ is a diffeomorphism, see
\cite{Holland:07}. Thus the rectangle $\Sigma^{+}=\{(x,y,1):x> 0,|y|\leq 1\}$
gets mapped into a region $P_1(\Sigma^{+})$ with a cusp at $y=0$.  The cusp
boundary can be represented as the graph $|y|=z^{\beta'/\beta}\approx
z^{8}$. The flow has a strong stable foliation, and we form the quotient space
$\widehat\Sigma=\Sigma/\sim$ by defining an equivalence relation $p\sim q$ if
$p\in\gamma^s(q)$, for a stable leave $\gamma^s$. Hence the map
$P:\Sigma\to\Sigma$ can be reduced to a uniformly expanding one-dimensional
map $f:\widehat\Sigma\to\widehat\Sigma$, with a derivative singularity at
$x=0$. Here $\widehat\Sigma$ identified with $[-1,1]$, and $f'(x)\approx
|x|^{\beta-1}$ near $x=0$.

The Lorenz flow admits an SRB measure $\nu$ which can be written 
as $\nu=\nu_{P}\times\mathrm{Leb}$ (up to a normalisation constant).
The measure $\nu_P$ is the SRB measure associated to the Poincar\'e map $P$, and
the measure is exact dimensional, i.e. the local dimension is defined $\nu$-a.e.,
see \cite{galatolo:2009}. Using the existence of the stable foliation, and the
SRB property of $\nu$, we can write $\nu_P$ as the (local) product $\nu_{\gamma^u}
\times\nu_{\gamma^s}$ where $\nu_{\gamma^u}$ is the conditional measure
on unstable manifolds, and $\nu_{\gamma^s}$ is the conditional measure
on stable manifolds. We can identify each measure
$\nu_{\gamma^u}$ (via a holonomy map) with that
of the invariant measure $\nu_{f}$ associated to $f$. The measure
$\nu_{f}$ is absolutely continuous
with respect to Lebesgue measure, but it has zero density at the endpoints
of $\widehat\Sigma$, that is
\begin{equation}
  \label{eq:zerodens}
  \nu_f([1-\epsilon,1])\approx
  \epsilon^{1/\beta}\approx\epsilon^{4.4}\quad\text{ as $\epsilon\to 0$}.
\end{equation}
From this analysis we can now conjecture the values of $\xi$.  Following the
reasoning as applied in \secref{solenoid} the conjectural values of $\xi$ are
\begin{align}
  \label{eq:xis-lorenz63-root}
  -&\frac{1}{\xi}=\dim_H(\nu),&&\text{for observable~\eqref{obsroot},}\\
  \label{eq:xis-lorenz63-flat}
  -&\frac{1}{\xi}=\frac{1}{\beta}+\frac12+\tilde{d}_s && \text{where
    $\tilde{d}_s\ll 1$ for observable~\eqref{obsflat}.}
\end{align}
The constant $\tilde{d}_s$ comes from the dimension of $\nu_s$ which is
(numerically) seen to be small due to the strong stable foliation.  As in
\secref{henon}, we replace the Hausdorff dimension with the Lyapunov
dimension, which we numerically estimate at $\dim_L\Lambda\approx2.06$. We
take this value to be the estimate of the local dimension of $\nu$. In
contrast with the solenoid and H\'enon maps, the tail index associated to
observable ~\eqref{obsroot} comes from an estimate of the measure of
$\nu(L^{+}(\tilde{u}-\epsilon))$ which we assume scales as the product of the
three factors: $\sqrt\epsilon\cdot\epsilon^{d_u}\cdot\epsilon^{8d_s}$.  Here
the factor $\sqrt\epsilon$ comes from the measure $\nu$ conditioned on
$\Lambda\cap L^{+}(\tilde{u}-\epsilon)$ in the (central)-flow direction, while
the factor $\epsilon^{d_u}$ comes from the $\nu_P$-measure conditioned on
unstable manifolds that terminate at the cusp. In a generic case we would
expect $d_u=1$. However, since we are near the cusp (namely near the boundary
$\partial\widehat\Sigma$) we have $d_u=1/\beta=4.4$ due to the zero in the
density of $\nu_{f}$, see~\eqref{zerodens}. Finally we have a contributing
factor $\epsilon^{8d_s}$ that comes from the the strength of the cusp at
$P(\partial\Sigma)$, with $d_s$ the local dimension of $\nu_{\gamma^s}$.  We
would expect typically that $d_s\approx 0.06$, but it could be much smaller if
we are in the vicinity of the cusp.

For the numerical simulations we first consider observable~\eqref{obsroot},
where $p_M$ is a point chosen in the attractor by the same procedure used
before, namely selecting the final point of an orbit of length $10^3$ time
units. The estimates converge to the theoretically expected values of
$-1/\dim_L\Lambda\approx-0.5$, see \figref{63-blocklens}~(A).  Note that
convergence is attained here for block lengths of a few thousands, unlikely
what has been observed for the H\'enon and Lozi maps.  We also obtain
convergence to the conjectured value~\eqref{xis-lorenz63-flat} for
observable~\eqref{obsflat}, see \figref{63-blocklens}~(B).  Also in this case
the convergence is much faster than for the H\'enon and Lozi maps.

For the Lorenz84 system~\eqref{lorenz84}, Lorenz detected a H\'enon like
structure in a Poincar\'e section with the plane $y=0$, see~\cite[Figures 7
and 8]{Lorenz:84}. If this conjectural structure was correct, then the
attractor would coincide with the two-dimensional unstable manifold of a
saddle-like periodic orbit of the flow of~\eqref{lorenz84}.

Assuming that there is exists an SRB measure $\nu$ supported on this attractor,
and that there is a local product structure so that $\nu$ can be written
as $\nu_{\gamma^u}\times\nu_{\gamma^s}$ (as with Lorenz63), then following 
the reasoning of \secref{solenoid} the conjectural values of $\xi$ are
\begin{align}
  \label{eq:xis-lorenz84-root}
  -&\frac{1}{\xi}=\dim_H(\nu),&&\text{for observable~\eqref{obsroot}.}\\
  \label{eq:xis-lorenz84-flat}
  -&\frac{1}{\xi}=\frac{\dim_H(\nu_{\gamma^u})}{2}+\dim_H(\nu_{\gamma^s})&&
\text{for observable~\eqref{obsflat}, with $\dim(\gamma^u)=2$.}
\end{align}
In this conjecture, it is assumed that the level sets 
$L^{+}(\tilde{u}-\epsilon)$ meet the unstable manifolds via 
generic quadratic tangencies (unlike Lorenz63).
The estimates for observable~\eqref{obsroot} display oscillations around the
theoretical value~\eqref{xis-lorenz84-root}, see~\figref{84-blocklens}~(A).
This behaviour similar to what observed for the H\'enon and Lozi maps, see
Figs.~\ref{fig:henon-blocklens-alpha} and~\ref{fig:lozi-blocklens-alpha}~(A).
The estimates in \figref{84-blocklens}~(B) display oscillations around the
value~\eqref{xis-lorenz84-flat}: again this is similar to what was observed in
the H\'enon and Lozi maps, see Figs.~\ref{fig:henon-blocklens-theta}
and~\ref{fig:lozi-blocklens-alpha}~(B).

\section{Discussion and conclusions}
\label{sec:conclusions}

This paper has presented an extension of the currently available extreme value
theory for dynamical systems to types of observables which are more similar to
those found in applications. Namely, the observables considered here are not
(necessarily) functions of the distance from a point which is generic with
respect to the invariant measure of the chaotic
system. Formula~\eqref{tail-solenoid-abcout} and its
generalisation~\eqref{xis-lorenz84-flat} were derived under generic
assumptions on the geometry of the invariant manifolds underlying the strange
attractor. Current research by the Authors aims at formulating explicit
conditions under which such formulas hold, both for uniformly and
non-uniformly hyperbolic systems. Preliminary findings suggest the following.
Suppose we have a system with an attractor $\Lambda\subset\mathbb{R}^d$ that
supports a Sinai-Ruelle-Bowen (SRB) measure $\nu$. Moreover suppose that
$\Lambda$ admits a local product structure so that $\nu$ can be locally
regarded as the product measure $\nu_{\gamma^u}\times\nu_{\gamma^s}$, where
$\nu_{\gamma^u}$ (resp. $\nu_{\gamma^s}$) are the conditional measures on
unstable (resp. stable) manifolds.  Since $\nu$ is SRB, the measures
$\nu_{\gamma^u}$ are equivalent to the Riemannian measures on the unstable
manifolds, and their local dimension $d_u$ is an integer. The local dimension
of $\nu_{\gamma^s}$ is typically non-integer. For sufficiently smooth
observables $\phi:\mathbb{R}^d\to\mathbb{R}$ that have maxima off $\Lambda$,
we conjecture that the tail index $\xi$ is given by the formula:
\begin{equation}\label{eq:general-tail}
-\frac{1}{\xi}=\frac{d_u}{2}+d_s.
\end{equation}
The factor $\frac{d_u}{2}$ comes from assuming that the level sets meet the
unstable manifolds in generic (quadratic) tangencies. The factor $d_s$ is the
local dimension of $\nu_{\gamma^s}$. We believe that this dimension $d_s$ is
equal to the dimension of the tip set $\Gamma$ as defined by equation
(\ref{eq:tipset}). Most of our examples had $\mathrm{dim}_H(\Gamma)<1$, but in
general this could be larger than 1. If this is so, then the projection of
$\Gamma$ onto a line in the direction of $\nabla\phi(\tilde{p})$ would
typically have dimension equal to one, and the intersection of $\Gamma$ with
each level set would (typically) be an uncountable set of positive Hausdorff
dimension.  Thus in addition to studying regularity of unstable holonomies, a
careful analysis of the attractor's geometry would be required when estimating
the $\nu$-measure of the level regions nearby the extremal point $\tilde{p}$.

It is of interest to verify the above formula for maps where $d_u$ is larger
than one: such is the case for the so-called quasi-periodic H\'enon-like
attractors~\cite{BSV4,BSV2,BSV3c}, which are contained in the closure of the
2D unstable manifold of a saddle-like invariant circle. For flows, this
situation corresponds to $d_u=3$, see e.g.~\cite{BSV1}.  Also, the Lorenz63
example presented in \secref{lorenz} shows beyond doubt that the geometry of
the attractor can play a substantial role in determining the limit GEV
distribution. In that case the level sets of the observable do not meet the
attractor via quadratic tangencies: instead, the level sets meet the attractor
at cusps where the measure $\nu_{\gamma^u}$ has a zero.  Therefore relation
(\ref{eq:general-tail}) fails to hold and the alternative
formula~\eqref{xis-lorenz63-flat} is derived.  This situation bears
resemblance to the configuration \figref{thom-illustrationalpha}~(C) for
Thom's map, which leads to formula~\eqref{tail-thom-alphaout2} for the tail
index.  Similarly, a modified formula for $\xi$ is expected to hold for the
Lozi map under the observable $\phi(x,y)=x$, for which the extremal point
$\tilde{p}$ coincides with a cusp-like point in the attractor.

As far as applications are concerned, this paper both points at the further
development of useful methodologies and also raises a number of significant
questions. We envisage the development of estimation methods for the
parameters of the GEV distribution which take into account the information
provided by formulas such as~\eqref{general-tail}. Given a concrete system,
parameter estimation would be complemented by an analysis of the structure of
the attractor to determine appropriate values for $d_u$ and $d_s$.
Specifically, $d_u$ could be estimated by examining Poincar\'e sections of the
attractor and/or finite time Lyapunov exponents. Calculation of Lyapunov
exponents would then yield $d_s$ through the relation
$\dim_L(\Lambda)=d_u+d_s$, which follows from the local product structure of
the invariant measure. Such analysis would also aim to ascertain whether a
formula like~\eqref{general-tail} or appropriate modifications
like~\eqref{xis-lorenz63-flat} should be used. This information could be fed
into the parameter estimation procedure in an appropriate Bayesian setting.

In the presence of parameter-dependent systems, these formulas provide an
explanation for the smooth-like dependence of extreme value statistics with
respect to changes in the control parameters. This phenomenon was first
observed in~\cite{Felicietal2007a,Felicietal2007b} and the implications for
parameter estimation in non-stationary systems were discussed
in~\cite{VHF:09}. This phenomenon critically depends on the structure of the
observables: indeed, for observables like~\eqref{energy-vorticity} we expect
formulas like~\eqref{general-tail} or~\eqref{xis-lorenz63-flat}, which could
depend smoothly on control parameters through smooth-like dependence of the
attractor dimension on control parameters. On the other hand such smooth-like
dependence is rather unlikely to occur for the observables considered so far
in the theoretical work, which are of the form~\eqref{obs-dist}.  Indeed, SRB
measures in geophysical systems are usually singular with the Lebesgue measure
in phase space, due to dissipation. Therefore, even if the point $p_M$ is
generic for the SRB measure for a given value of the control parameters, this
situation \emph{is typically not stable} under parameter variation.  If $p_M$
is fixed, then one would expect jumps in the value of $\xi$ whenever $p_M$
``drops off'' or ``drops into'' the attractor, see the discussion for
Figures~\ref{fig:xis-thom-b} and~\ref{fig:solenoid-blocklens-alpha}.

As far as the questions are concerned, the main one appears to be the
extremely slow convergence displayed by the H\'enon-like attractors considered
here (see
e.g. Figures~\ref{fig:henon-blocklens-alpha},~\ref{fig:lozi-blocklens-alpha}~\ref{fig:84-blocklens}). Such
a slow convergence has been previously observed in more complex atmospheric
models, see~\cite{vannitsem:07}. Does such a slow convergence take place in
state-of-the-art global climate models? This might pose a very serious
methodological problem for those studies aiming at quantifying climatic change
in extremes, for example changes in the behaviour of hurricanes, wind storms
and extreme rainfall.

These problems even raise the following provocative question: how relevant are
limit laws for extreme behaviour, if it takes too long for the limit to be
attained for any practical purpose? This question may have different answers.
One possibility is that novel modelling approaches could be developed to
provide more reliable estimates of extreme behaviour, not necessarily
restricted to the standard limit laws such as the GEV or the Generalised
Pareto distributions~\cite{Coles2001}. Alternatively, novel parameter
estimation procedures might be developed, that incorporate corrections or
modifications to account for the phenomena illustrated for H\'enon-like
attractors, also see \figref{attractor}. The results of this paper seem to
suggest that whatever the final answer(s), the methods will have to take into
account the geometry and the fractal nature of the strange attractors
underlying the dynamics.  We believe that these questions and problems will be
the subject of significant research efforts in the near future.

%%%%%%%%%%%%%%%%%%%%%%%%%%%%%%%%%%%%%%%%%%%%%%%%%%%%%%%%%%%%%%%%%%%%%
\appendix
\section{Parameter estimation for the GEV distribution}
\label{app:numerical}
%%%%%%%%%%%%%%%%%%%%%%%%%%%%%%%%%%%%%%%%%%%%%%%%%%%%%%%%%%%%%%%%%%%%%

We now describe the procedure which we have used to estimate the parameters
$\mu,\sigma,\xi$ of the GEV distribution~\eqref{GEV}.  Consider $N_{bmax}$
values $x_1,\ldots,x_{N_{bmax}}$ which we assume to form a random sample
from~\eqref{GEV}. For the systems under consideration, it often turns out that
the theoretically expected value of $\xi$ is smaller than $-0.5$. In such
cases, the standard maximum likelihood approach cannot be used, because the
maximum likelihood estimator is not regular~\cite{Coles2001}.  We therefore
resort to the method of L-moments~\cite{Hosking1990}.  For the GEV
distribution, the L-moments estimation equations are
\begin{align}
  \label{eq:Lmommu}
  \lambda_1&=\mu-\frac\sigma\xi(1-\Gamma(1-\xi)),\\
  \label{eq:Lmomsi}
  \lambda_2&=-\frac\sigma\xi(1-2^\xi)\Gamma(1-\xi),\\
  \label{eq:Lmomxi}
  \frac{\lambda_3}{\lambda_2}&=2\frac{1-3^\xi}{1-2^\xi}-3,
\end{align}
see Table 1 in~\cite{Hosking1990}.  Given the sample
$x_1,\ldots,x_{N_{bmax}}$, we use the \texttt{R} package \texttt{Lmoments}
(\texttt{http://cran.r-project.org/}) to estimate the first three L-moments
$\lambda_i,i=1,2,3$.  Eq.~\eqref{Lmomxi} is then solved for $\xi$ by a Newton
method, starting from the initial estimate $\hat{\xi}=7.859z+2.9554z^2$, with
$z=2/(3+\frac{\lambda_3}{\lambda_2})-\log2/\log3$, see Table 2
in~\cite{Hosking1990}. Once an estimate of $\xi$ is obtained, this is plugged
into~\eqref{Lmomsi}, which is solved for $\sigma$.  Lastly~\eqref{Lmommu} is
solved for $\mu$.

For the numerical computations, which also include quantifying the estimation
uncertainty, we adopt the following procedure. Positive integers $N_{bmax}$,
$N_{blocklen}$ and $N_{samp}$ are fixed: here $N_{bmax}$ is the total number
of block maxima to be computed and $N_{blocklen}$ is the length of the data
blocks over which each maximum has to be extracted.  We first discard a
transient of $10^5$ iterates with the map under consideration (e.g. the
H\'enon or the solenoid map). Then a total of $N_{bmax}\cdot N_{blocklen}$
iterates with the map is computed and the $N_{bmax}$ block maxima are
extracted. This sample is divided into $N_{samp}$ sub-samples, each containing
$N_{bmax}/N_{samp}$ values.  We then apply the above L-moment estimation
procedure to each sub-sample, thereby obtaining $N_{samp}$ distinct parameter
estimates
\begin{equation}
  \label{eq:subsample}
  \{(\mu_s,\sigma_s,\xi_s)\,|\,s=1,\ldots,N_{samp}\}.
\end{equation}
The sample means of the estimates~\eqref{subsample}:
\begin{equation}
  \label{eq:finalestim}
  \hat{\mu}=\frac1{N_{samp}}\sum_{s=1}^{N_{samp}}\mu_s,\quad
  \hat{\sigma}=\frac1{N_{samp}}\sum_{s=1}^{N_{samp}}\sigma_s,\quad
  \hat{\xi}=\frac1{N_{samp}}\sum_{s=1}^{N_{samp}}\xi_s
\end{equation}
are taken as the final GEV parameter estimates and the  standard deviations
\begin{equation}
  \label{eq:finaluncer}
  s_{\mu}^2=\frac1{N_{samp}}\sum_{s=1}^{N_{samp}}(\mu_s-\hat{\mu})^2,\quad
  s_{\sigma}^2=\frac1{N_{samp}}\sum_{s=1}^{N_{samp}}(\sigma_s-\hat{\sigma})^2,\quad
  s_{\xi}^2=\frac1{N_{samp}}\sum_{s=1}^{N_{samp}}(\xi_s-\hat{\xi})^2
\end{equation}
are taken as estimates of uncertainty for the final values~\eqref{finalestim}.

\section*{Acknowledgments}
This research has been carried out within the project ``PREDEX: PREdictability
of EXtremes weather events'', funded by the Complexity-NET:
\texttt{www.complexitynet.eu/}.  The Authors gratefully acknowledge support by
the UK and Dutch funding agencies involved in the Complexity-NET: the EPSRC
and the NWO.  P. R. has been partially supported by the MEC grant
MTM2009-09723.  The Authors are also indebted to their respective Institutes
for kind hospitality.

\bibliography{predex}

\def\cprime{$'$}
\begin{thebibliography}{10}

\bibitem{Araujo:09}
V.~Araujo, M.~J. Pacifico, E.~R. Pujals, and M.~Viana.
\newblock Singular-hyperbolic attractors are chaotic.
\newblock {\em Trans. Amer. Math. Soc.}, 361(5):2431--2485, 2009.

\bibitem{Beirlantetal2004}
Jan Beirlant, Yuri Goegebeur, Jozef Teugels, and Johan Segers.
\newblock {\em Statistics of Extremes: Theory and Applications}.
\newblock John Wiley and Sons, Berlin, 2004.

\bibitem{benedicks-young}
M.~Benedicks and L.-S. Young.
\newblock Markov extensions and decay of correlations for certain {H}\'enon
  maps.
\newblock {\em Asterisque}, 261:13--56, 2000.

\bibitem{BSV1}
Henk Broer, Carles Sim{\'o}, and Renato Vitolo.
\newblock Bifurcations and strange attractors in the {L}orenz-84 climate model
  with seasonal forcing.
\newblock {\em Nonlinearity}, 15(4):1205--1267, 2002.

\bibitem{BSV2}
Henk Broer, Carles Sim{\'o}, and Renato Vitolo.
\newblock Chaos and quasi-periodicity in diffeomorphisms of the solid torus.
\newblock {\em Discrete Contin. Dyn. Syst. B}, 14(3):871--905, 2010.

\bibitem{BroerTakens2011}
Henk~W. Broer and Floris Takens.
\newblock {\em Dynamical systems and chaos}, volume 172 of {\em Applied
  Mathematical Sciences}.
\newblock Springer, New York, 2011.

\bibitem{BDSSV2010}
H.W. Broer, H.A Dijkstra, C.~Sim\'o, A.E. Sterk, and R.~Vitolo.
\newblock The dynamics of a low-order model for the {A}tlantic {M}ultidecadal
  {O}scillation.
\newblock {\em Discrete Contin. Dyn. Syst. B}, 16(1):73--107, 2011.

\bibitem{Castillo}
Enrique Castillo.
\newblock {\em Extreme value theory in engineering}.
\newblock Statistical Modeling and Decision Science. Academic Press Inc.,
  Boston, MA, 1988.

\bibitem{Chazottes:10}
J.~. {Chazottes} and P.~{Collet}.
\newblock {Poisson approximation for the number of visits to balls in
  nonuniformly hyperbolic dynamical systems}.
\newblock {\em ArXiv e-prints}, July 2010.

\bibitem{Coles2001}
Stuart Coles.
\newblock {\em An Introduction to Statistical Modeling of Extreme Values}.
\newblock Springer Series in Statistics. Springer, New York, 2001.

\bibitem{Collet}
P.~Collet.
\newblock Statistics of closest return for some non-uniformly hyperbolic
  systems.
\newblock {\em Ergodic Theory and Dynamical Systems}, 21:401--420, 2001.

\bibitem{Collet:84}
P.~Collet and Y.~Levy.
\newblock Ergodic properties of the {L}ozi mappings.
\newblock {\em Comm. Math. Phys.}, 93(4):461--481, 1984.

\bibitem{Embrechts}
Paul Embrechts, Claudia Kl{\"u}ppelberg, and Thomas Mikosch.
\newblock {\em Modelling extremal events}, volume~33 of {\em Applications of
  Mathematics (New York)}.
\newblock Springer-Verlag, Berlin, 1997.
\newblock For insurance and finance.

\bibitem{Falconer}
Kenneth Falconer.
\newblock {\em Fractal geometry}.
\newblock John Wiley \& Sons Inc., Hoboken, NJ, second edition, 2003.
\newblock Mathematical foundations and applications.

\bibitem{Faranda2011}
D.~Faranda, V.~Lucarini, G.~Turchetti, and S.~Vaienti.
\newblock Numerical convergence of the block-maxima approach to the generalized
  extreme value distribution.
\newblock Preprint: arXiv:1103.0889v1, 2011.

\bibitem{Felicietal2007a}
Mara Felici, Valerio Lucarini, Antonio Speranza, and Renato Vitolo.
\newblock Extreme value statistics of the total energy in an
  intermediate-complexity model of the midlatitude atmospheric jet. part {I}:
  Stationary case.
\newblock {\em J. Atmos. Sci.}, 64(7):2137--2158, July 2007.

\bibitem{Felicietal2007b}
Mara Felici, Valerio Lucarini, Antonio Speranza, and Renato Vitolo.
\newblock Extreme value statistics of the total energy in an
  intermediate-complexity model of the midlatitude atmospheric jet. part {II}:
  Trend detection and assessment.
\newblock {\em J. Atmos. Sci.}, 64(7):2159--2175, July 2007.

\bibitem{Fre09}
A.~C.~M. Freitas.
\newblock Statistics of the maximum for the tent map.
\newblock {\em Chaos Solitons Fractals}, 42(1):604--608, 2009.

\bibitem{FF08}
A.~C.~M. Freitas and J.~M. Freitas.
\newblock On the link between dependence and independence in extreme value
  theory for dynamical systems.
\newblock {\em Statist. Probab. Lett.}, 78(9):1088--1093, 2008.

\bibitem{FFT10c}
A.~C.~M. {Freitas}, J.~M. {Freitas}, and M.~{Todd}.
\newblock {Extremal Index, Hitting Time Statistics and periodicity}.
\newblock {\em ArXiv e-prints}, August 2010.

\bibitem{FFT10b}
A.~C.~M. {Freitas}, J.~M. {Freitas}, and M.~{Todd}.
\newblock {Extreme Value Laws in Dynamical Systems for Non-smooth
  Observations}.
\newblock {\em ArXiv e-prints}, June 2010.

\bibitem{FFT10}
A.~C.~M. Freitas, J.~M. Freitas, and M.~Todd.
\newblock Hitting time statistics and extreme value theory.
\newblock {\em Probab. Theory Related Fields}, 147(3-4):675--710, 2010.

\bibitem{FreitasandFreitas2008a}
Ana Cristina~Moreira Freitas and Jorge~Milhazes Freitas.
\newblock Extreme values for benedicks-carleson quadratic maps.
\newblock {\em Ergodic Theory Dynam. Systems}, 28(4):1117--1133, 2008.

\bibitem{Gal78}
J.~Galambos.
\newblock {\em The asymptotic theory of extreme order statistics}.
\newblock John Wiley \& Sons, New York-Chichester-Brisbane, 1978.
\newblock Wiley Series in Probability and Mathematical Statistics.

\bibitem{galatolo:2009}
S.~Galatolo and M.~J. Pacifico.
\newblock Lorenz-like flows: exponential decay of correlations for the
  poincar\'e map, logarithm law, quantitative recurrence.
\newblock {\em Ergodic Theory and Dynamical Systems}, 30:1703--1737, 2009.

\bibitem{GP1983}
Peter Grassberger and Itamar Procaccia.
\newblock Measuring the strangeness of strange attractors.
\newblock {\em Phys. D}, 9(1-2):189--208, 1983.

\bibitem{Gupta}
C.~Gupta.
\newblock Extreme value distributions for some classes of non-uniformly
  partially hyperbolic dynamical systems.
\newblock Preprint, 2010.

\bibitem{Guptaetal2008}
Chinmaya Gupta, Mark Holland, and Matthew Nicol.
\newblock Extreme value theory for a class of dynamical systems modeled by
  young towers.
\newblock {\em preprint}, 2009.

\bibitem{Hai03}
G.~Haiman.
\newblock Extreme values of the tent map process.
\newblock {\em Statist. Probab. Lett.}, 65(4):451--456, 2003.

\bibitem{Hasselblatt:04}
Boris Hasselblatt and J{\"o}rg Schmeling.
\newblock Dimension product structure of hyperbolic sets.
\newblock In {\em Modern dynamical systems and applications}, pages 331--345.
  Cambridge Univ. Press, Cambridge, 2004.

\bibitem{HNT}
M.~P. Holland, M.~Nicol, and A.~T{\"o}r{\"o}k.
\newblock Extreme value distributions for non-uniformly hyperbolic dynamical
  systems.
\newblock To appear Transactions AMS, 2010.

\bibitem{Holland:07}
Mark Holland and Ian Melbourne.
\newblock Central limit theorems and invariance principles for {L}orenz
  attractors.
\newblock {\em J. Lond. Math. Soc. (2)}, 76(2):345--364, 2007.

\bibitem{Hosking1990}
J.~R.~M. Hosking.
\newblock {$L$}-moments: analysis and estimation of distributions using linear
  combinations of order statistics.
\newblock {\em J. Roy. Statist. Soc. Ser. B}, 52(1):105--124, 1990.

\bibitem{Ishii:97}
Yutaka Ishii.
\newblock Towards a kneading theory for lozi mappings. ii: Monotonicity of the
  topological entropy and hausdorff dimension of attractors.
\newblock {\em Communications in Mathematical Physics}, 190:375--394, 1997.
\newblock 10.1007/s002200050245.

\bibitem{Kap1984}
James~L. Kaplan, John Mallet-Paret, and James~A. Yorke.
\newblock The {L}yapunov dimension of a nowhere differentiable attracting
  torus.
\newblock {\em Ergodic Theory Dynam. Systems}, 4(2):261--281, 1984.

\bibitem{Leadbetter1983}
M.~R. Leadbetter.
\newblock Extremes and local dependence in stationary sequences.
\newblock {\em Z. Wahrsch. Verw. Gebiete}, 65(2):291--306, 1983.

\bibitem{LLR83}
M.~R. Leadbetter, G.~Lindgren, and H.~Rootz{\'e}n.
\newblock {\em Extremes and related properties of random sequences and
  processes}.
\newblock Springer Series in Statistics. Springer-Verlag, New York, 1983.

\bibitem{Lorenz:63}
E.~N. Lorenz.
\newblock Deterministic nonperiodic flow.
\newblock {\em J. Atmos. Sci.}, 20:130--141, 1963.

\bibitem{Lorenz:84}
Edward~N. Lorenz.
\newblock Irregularity: a fundamental property of the atmosphere*.
\newblock {\em Tellus A}, 36A(2):98--110, 1984.

\bibitem{Lucarinietal2007}
Valerio Lucarini, Antonio Speranza, and Renato Vitolo.
\newblock Parametric smoothness and self-scaling of the statistical properties
  of a minimal climate model: what beyond the mean field theories?
\newblock {\em Phys. D}, 234(2):105--123, 2007.

\bibitem{Masoller:92}
C.~Masoller, A.C.Sicardi Schifino, and Lilia Romanelli.
\newblock Regular and chaotic behavior in the new lorenz system.
\newblock {\em Physics Letters A}, 167(2):185 -- 190, 1992.

\bibitem{NBN06}
C.~Nicolis, V.~Balakrishnan, and G.~Nicolis.
\newblock Extreme events in deterministic dynamical systems.
\newblock {\em Physical Review Letters}, 97:210602, 2006.

\bibitem{Res87}
S.~I. Resnick.
\newblock {\em Extreme values, regular variation, and point processes},
  volume~4 of {\em Applied Probability. A Series of the Applied Probability
  Trust}.
\newblock Springer-Verlag, New York, 1987.

\bibitem{Shilnikov:95}
A.~Shil{\cprime}nikov, G.~Nicolis, and C.~Nicolis.
\newblock Bifurcation and predictability analysis of a low-order atmospheric
  circulation model.
\newblock {\em Internat. J. Bifur. Chaos Appl. Sci. Engrg.}, 5(6):1701--1711,
  1995.

\bibitem{Sim97}
K.~Simon.
\newblock The {H}ausdorff dimension of the {S}male-{W}illiams solenoid with
  different contraction coefficients.
\newblock {\em Proc. Amer. Math. Soc.}, 125(4):1221--1228, 1997.

\bibitem{sparrow}
Colin Sparrow.
\newblock An introduction to the {L}orenz equations.
\newblock {\em IEEE Trans. Circuits and Systems}, 30(8):533--542, 1983.

\bibitem{Sterk:10}
A.E. Sterk, R.~Vitolo, H.W. Broer, C.~Sim\'{o}, and H.A. Dijkstra.
\newblock New nonlinear mechanisms of midlatitude atmospheric low-frequency
  variability.
\newblock {\em Physica D}, 239:702--718, 2010.

\bibitem{Tucker1999}
Warwick Tucker.
\newblock The {L}orenz attractor exists.
\newblock {\em C. R. Acad. Sci. Paris S\'er. I Math.}, 328(12):1197--1202,
  1999.

\bibitem{vanVeen:03}
Lennaert van Veen.
\newblock Baroclinic flow and the {L}orenz-84 model.
\newblock {\em Internat. J. Bifur. Chaos Appl. Sci. Engrg.}, 13(8):2117--2139,
  2003.

\bibitem{vanVeen:01}
Lennaert Van~Veen, Theo Opsteegh, and Ferdinand Verhulst.
\newblock Active and passive ocean regimes in a low-order climate model.
\newblock {\em Tellus A}, 53(5):616--628, 2001.

\bibitem{vannitsem:07}
S.~Vannitsem.
\newblock Statistical properties of the temperature maxima in an intermediate
  order quasi-geostrophic model.
\newblock {\em Tellus A}, 59(1):80--95, 2007.

\bibitem{VHF:09}
R.~Vitolo, M.~P. Holland, and C.~A.~T. Ferro.
\newblock Robust extremes in chaotic deterministic systems.
\newblock {\em Chaos}, 19:043127, 2009.

\bibitem{Vitoloetal2008}
Renato Vitolo, Paolo Ruti, Alessandro dell'Aquila, Mara Felici, Valerio
  Lucarini, and Antonio Speranza.
\newblock Accessing extremes of mid-latitudinal wave activity: methodology and
  application.
\newblock {\em Tellus A}, 61:35--49, 2009.

\bibitem{BSV3c}
Renato Vitolo, Carles Sim{\'o}, and Henk Broer.
\newblock {Routes to chaos in the {H}opf-saddle-node bifurcation for fixed
  points of 3D-diffeomorphisms}.
\newblock {\em Nonlinearity}, 23:1919--1947, 2010.

\bibitem{BSV4}
Renato Vitolo, Carles Sim{\'o}, and Henk Broer.
\newblock Quasi-periodic bifurcations of invariant circles in low-dimensional
  dissipative dynamical systems.
\newblock {\em Regul. Chaotic Dyn.}, 16(1-2):154--184, 2011.

\bibitem{VitoloandSperanza2011}
Renato Vitolo and Antonio Speranza.
\newblock {Vortex statistics in a simple quasi-geostrophic model}.
\newblock {\em Preprint}, 2011.

\bibitem{WangYoung}
Qiudong Wang and Lai-Sang Young.
\newblock Toward a theory of rank one attractors.
\newblock {\em Ann. of Math. (2)}, 167(2):349--480, 2008.

\bibitem{Young:85}
Lai-Sang Young.
\newblock Bowen-{R}uelle measures for certain piecewise hyperbolic maps.
\newblock {\em Trans. Amer. Math. Soc.}, 287(1):41--48, 1985.

\bibitem{Young2002}
Lai-Sang Young.
\newblock What are {SRB} measures, and which dynamical systems have them?
\newblock {\em J. Statist. Phys.}, 108(5-6):733--754, 2002.

\end{thebibliography}

\begin{figure}[p]
  \centering
  \psfrag{Attractor}{$\Lambda$}
  \psfrag{An}{$A_n$}
  \psfrag{pM}{$p_M$}
  \psfrag{L(u)}{$L(u)$}
  \includegraphics[height=0.23\textwidth]{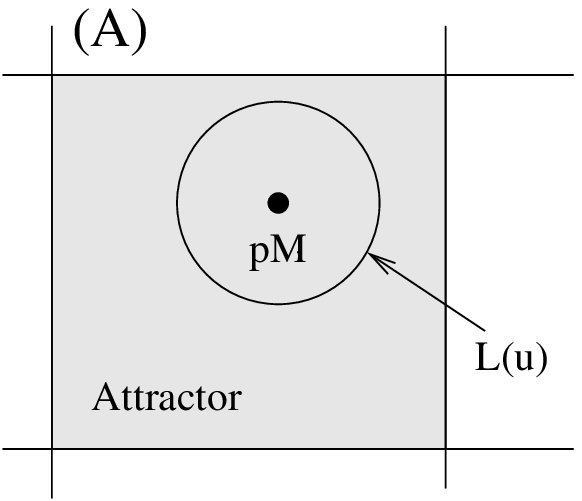}\quad
  \includegraphics[height=0.23\textwidth]{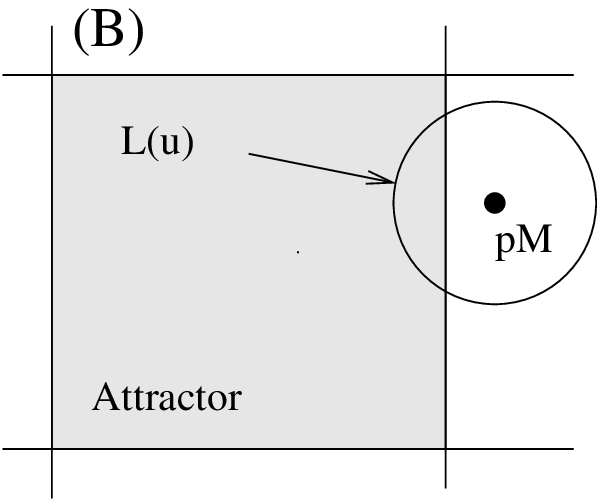}
  \includegraphics[height=0.27\textwidth]{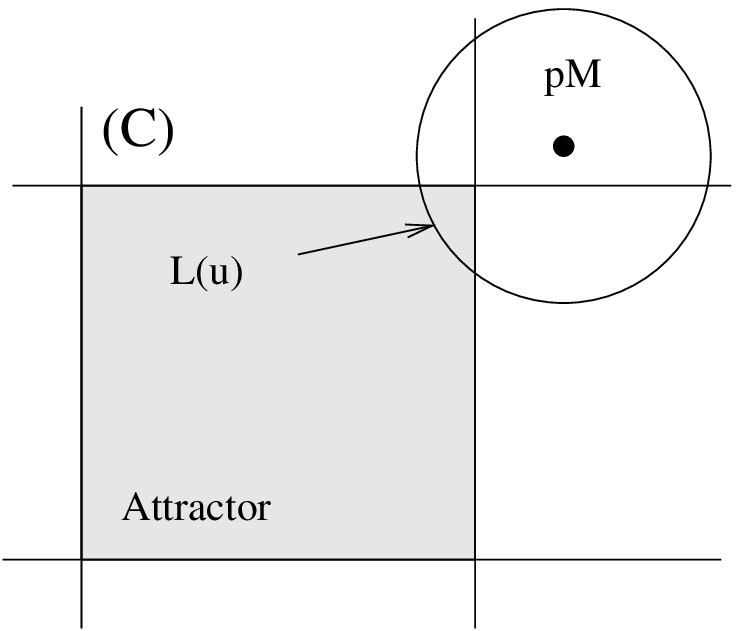}
  \caption{%
    Sketch of the three situations considered in \thmref{thom-theoryalpha} for
    the level sets $L(u)$ (defined in~\eqref{levelsets}) for observable
    $\phi_\alpha$~\eqref{obsalpha}. }
  \label{fig:thom-illustrationalpha}
\end{figure}

\begin{figure}[p]
  \centering
  \psfrag{Attractor}{$\Lambda$}
  \psfrag{An}{$A_n$}
  \psfrag{pM}{$p_M$}
  \psfrag{L(u)}{$L(u)$}
  \includegraphics[height=0.23\textwidth]{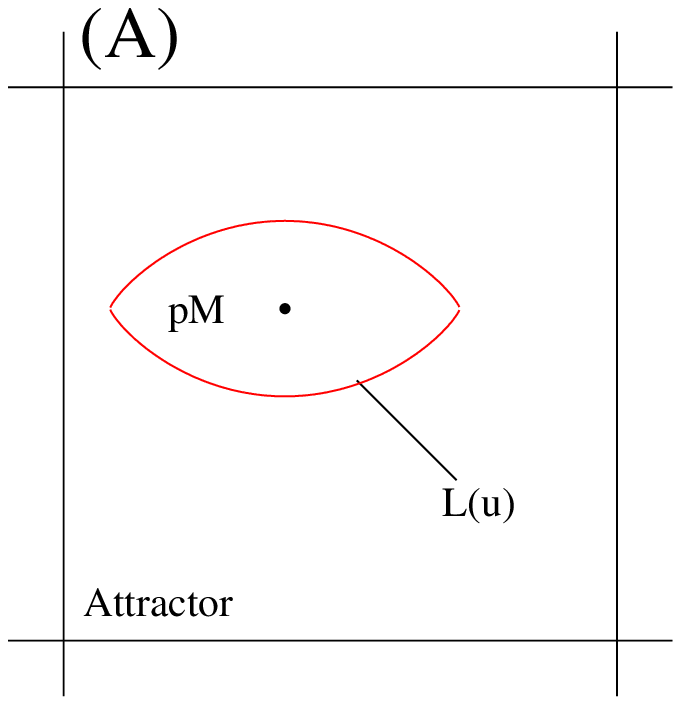}\quad
  \includegraphics[height=0.23\textwidth]{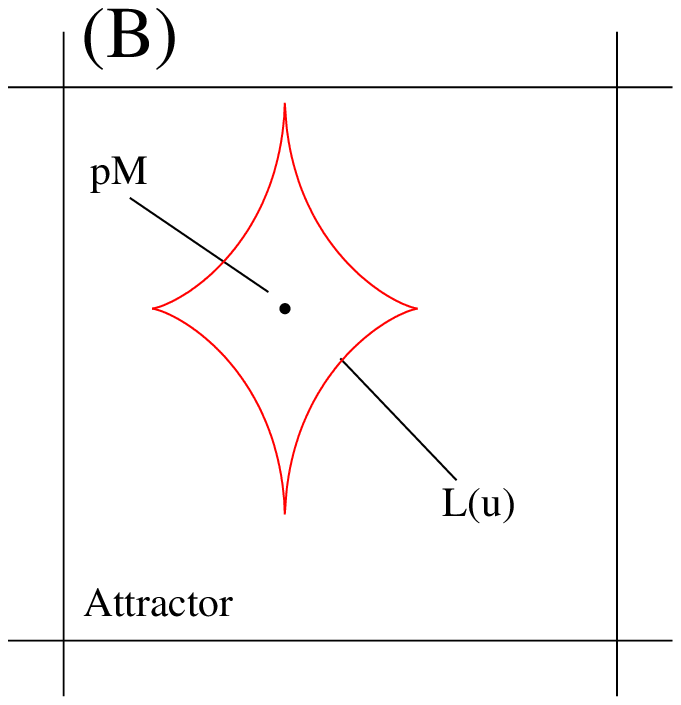}
  \includegraphics[height=0.23\textwidth]{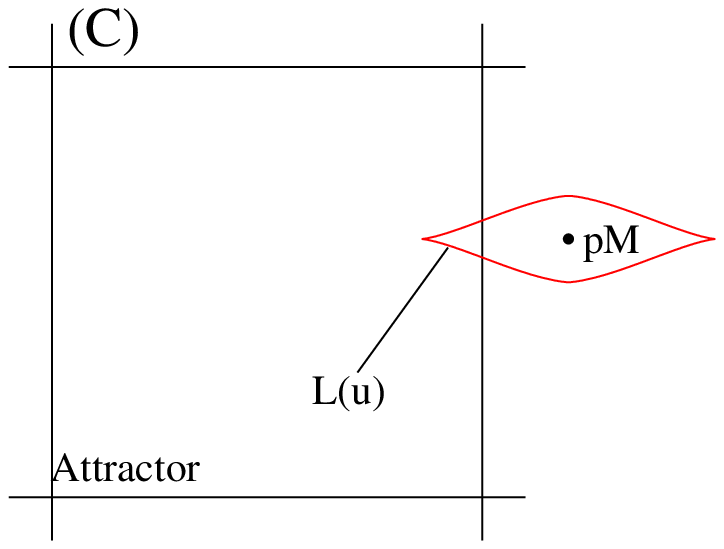}
  \caption{%
    Sketch of a few possible configurations for the level sets $L(u)$
    (defined in~\eqref{levelsets}) for observable $\phi_{ab}$~\eqref{obsab}.
    (A) $(a,b)=(2,1.25)$; (B) $(a,b)=(0.5,0.75)$; 
    (C) $(a,b)=(1.5,0.7)$.
  }
  \label{fig:thom-illustrationab}
\end{figure}

\begin{figure}[p]
  \centering 
  \psfrag{xi}{$\xi$}
  \psfrag{Nlength}{$N_{blocklen}$}
  \includegraphics[width=0.49\textwidth]{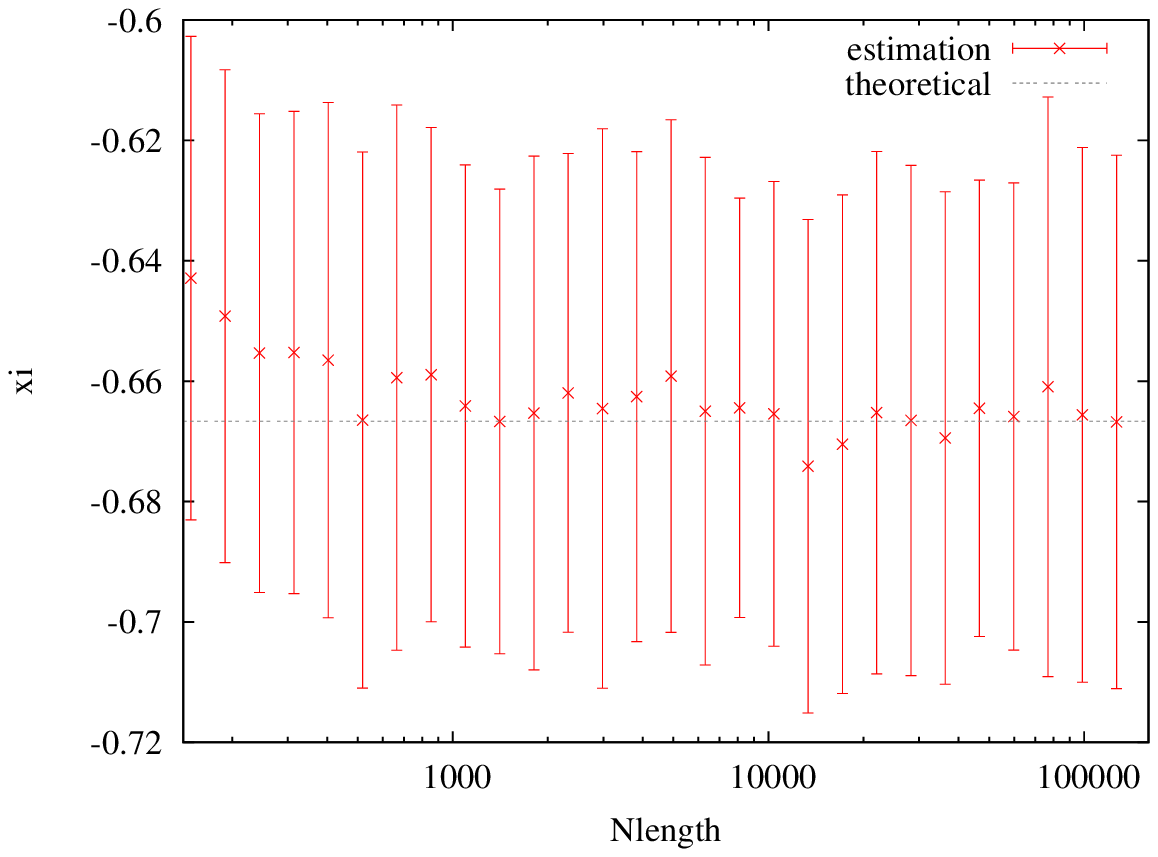}
  \includegraphics[width=0.49\textwidth]{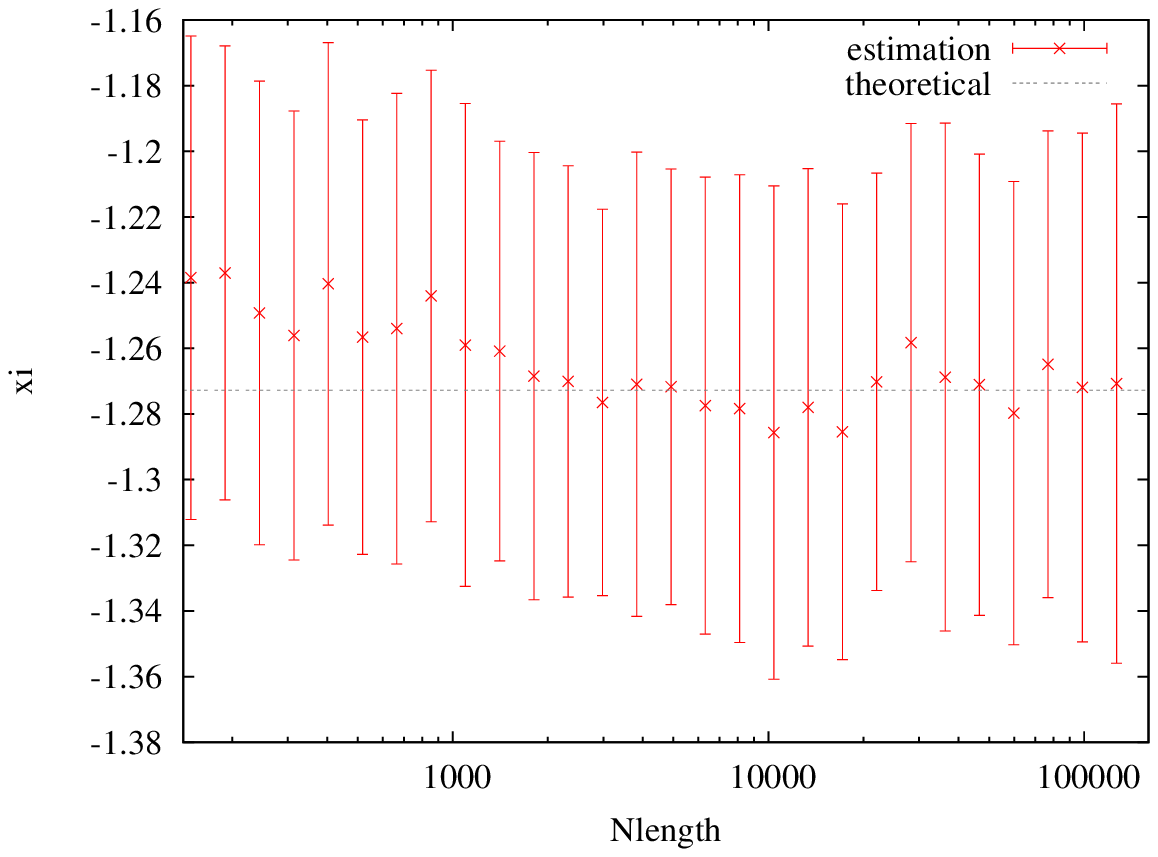}
  \caption{%
    Point estimates (crosses) and estimation uncertainty (vertical bars) of
    the tail index $\xi$ versus block length $N_{blocklen}$ for Thom's
    map~\eqref{thom} under the observable~\eqref{obsab} with $a=2$ and
    $p_M=(0.510001,0.5090001)$ fixed, where $b/2=0.5$ (left) and $b/2=1.75$
    (right). The horizontal dashed lines represent theoretically expected
    values according to~\eqref{tail-thom-abin}.  Crosses and vertical bars
    are the mean and $\pm$ one standard deviation of
    a sample of $N_{samp}=100$ individual estimates along a single orbit.
    Individual estimates are obtained by the method of L-moments
    with sequences of $N_{bmax}= 50000$ block maxima over blocks of length
    $N_{blocklen}$, as described in Appendix~\ref{app:numerical}.
  }
  \label{fig:thom-blocklens-p}
\end{figure}

\begin{figure}[p]
  \centering
  \psfrag{xi}{$\xi$}
  \psfrag{b}{$b$}
  \includegraphics[width=0.5\textwidth]{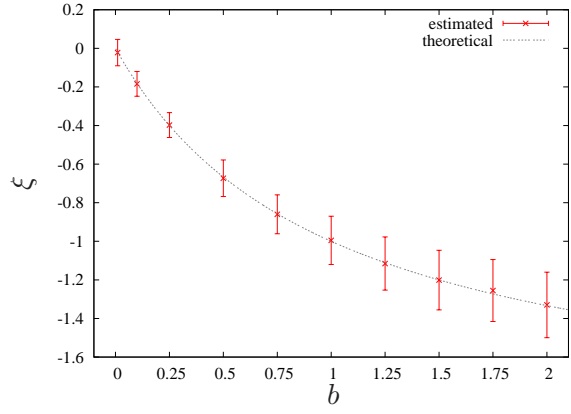}
  \caption{%
    Point estimates (crosses) and estimation uncertainty (vertical bars) of
    the tail index $\xi$ versus parameter $b$ for Thom's map~\eqref{thom}
    under the observable~\eqref{obsab} with $a=2$ and
    $p_M=(0.510001,0.5090001)$ fixed and varying
    $b/2=0.01,0.1,0.25,0.5,0.75,1,1.25,1.5,1.75,2$.  The dashed line
    represents theoretically expected values according
    to~\eqref{tail-thom-abin}.  Point and interval estimates are obtained by
    the method of L-moments as for \figref{xis-thom-p}, with $N_{bmax}=
    10000$, $N_{blocklen}=10000$ and $N_{samp}=100$, see
    Appendix~\ref{app:numerical}.
    }
  \label{fig:xis-thom-p}
\end{figure}

\begin{figure}[p]
  \centering 
  \psfrag{xi}{$\xi$}
  \psfrag{Nlength}{$N_{blocklen}$}
  \includegraphics[width=0.47\textwidth]{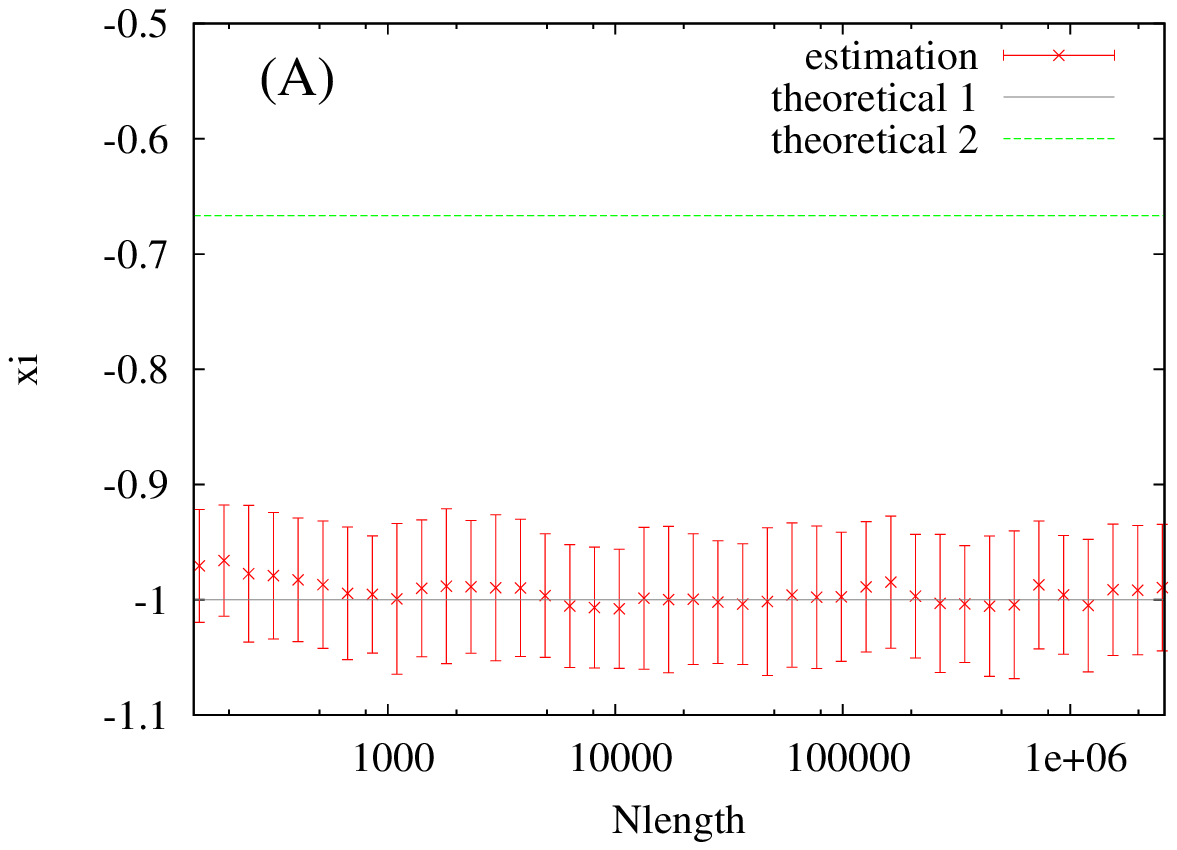}
  \includegraphics[width=0.47\textwidth]{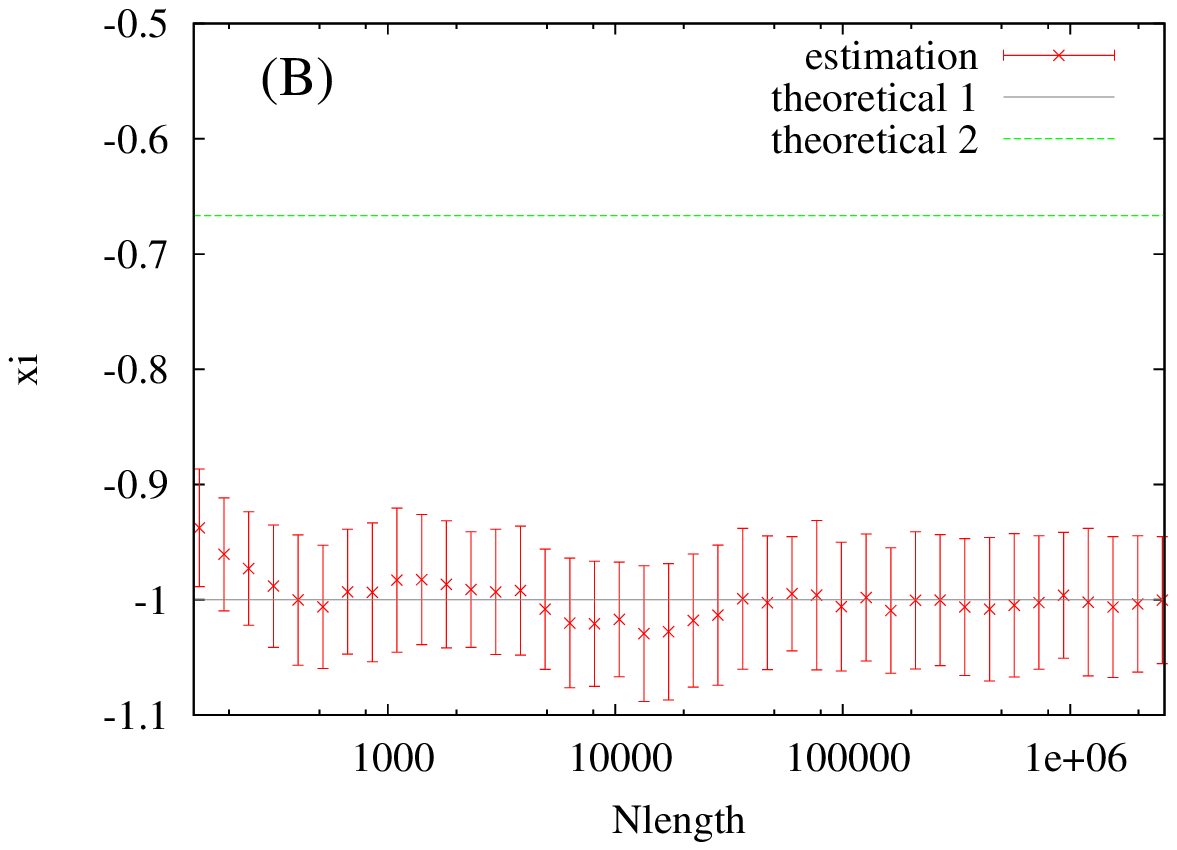}
  \includegraphics[width=0.47\textwidth]{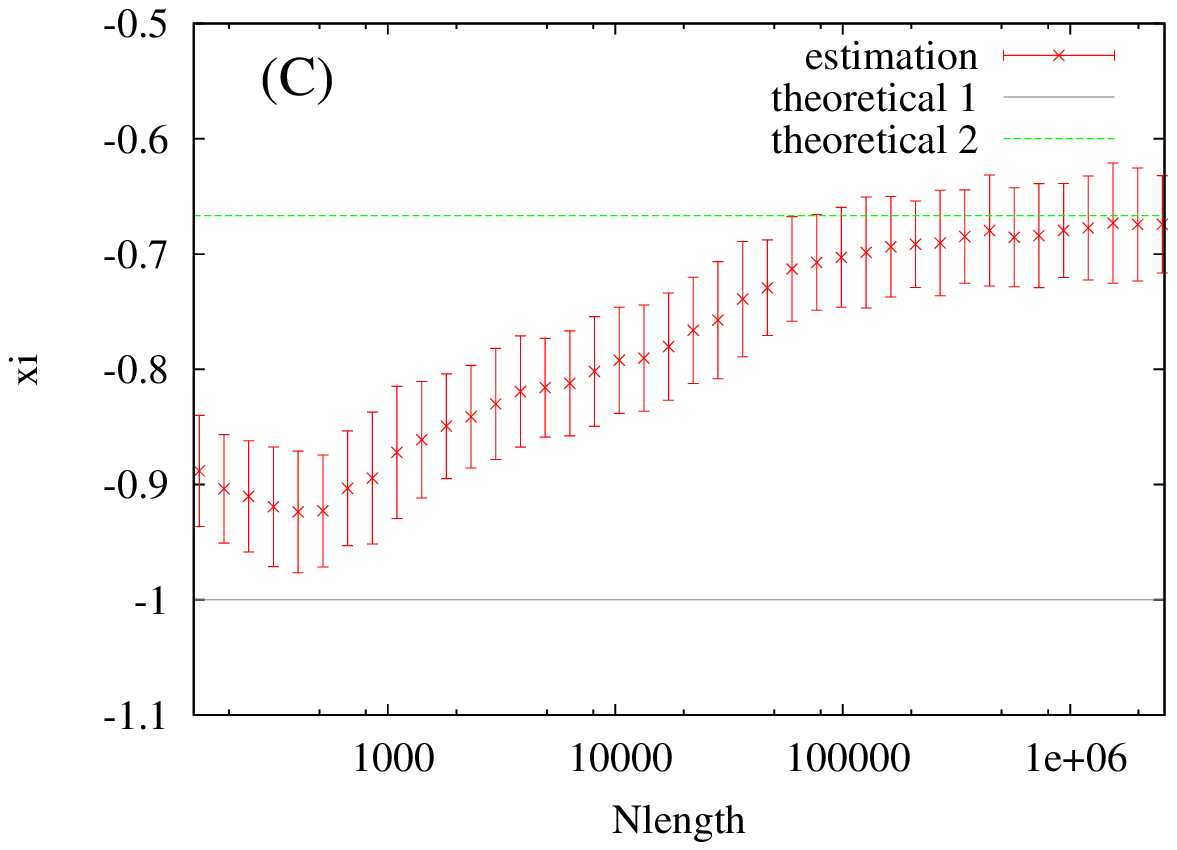}
  \includegraphics[width=0.47\textwidth]{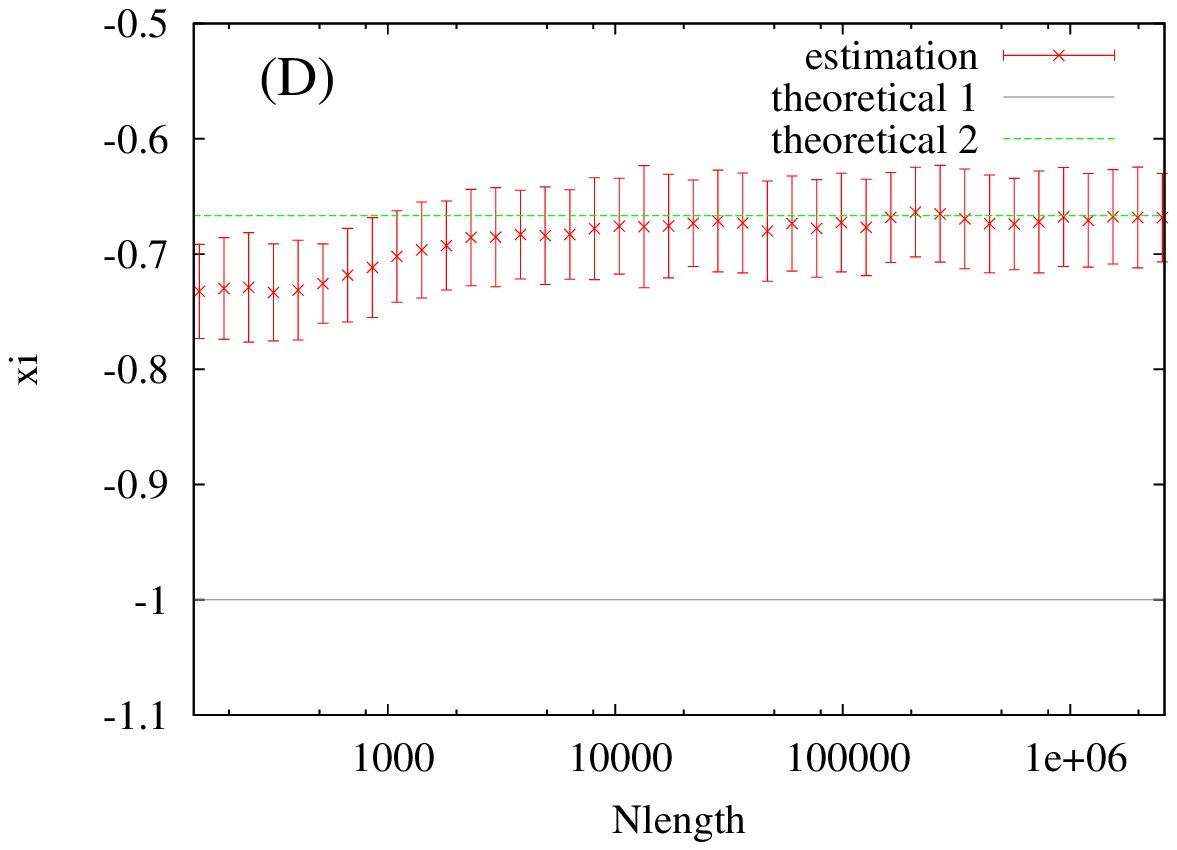}
  \caption{%
    Point estimates (crosses) and estimation uncertainty (vertical bars) of
    the tail index $\xi$ versus block length $N_{blocklen}$ for Thom's
    map~\eqref{thom} under the observable~\eqref{obsalpha} with $\alpha=1$
    and $p_M=(x_M,y_M)$ with $y_M=0.510001$ fixed and (A)
    $x_M= 0.9$, (B) $x_M=1.0$, (C) $x_M=1.01$, (D) $x_M=1.1$.
    Horizontal lines labelled by 1, 2 represent theoretical values according
    to~\eqref{tail-thom-alphain} and~\eqref{tail-thom-alphaout1},
    respectively.
    Estimates are obtained by the method of L-moments as for
    \figref{xis-thom-p}, with
    $N_{bmax}= 50000$ and $N_{samp}=100$, see
    Appendix~\ref{app:numerical}.  
  }
  \label{fig:thom-blocklens-b}
\end{figure}

\begin{figure}[p]
  \centering
  \psfrag{xM}{$x_M$}
  \psfrag{xi}{$\xi$}
  %\psfrag{theoretical 1}[c][B]{\tiny{theoretical~\eqref{tail-thom-alphain}}}
  \includegraphics[width=0.495\textwidth]{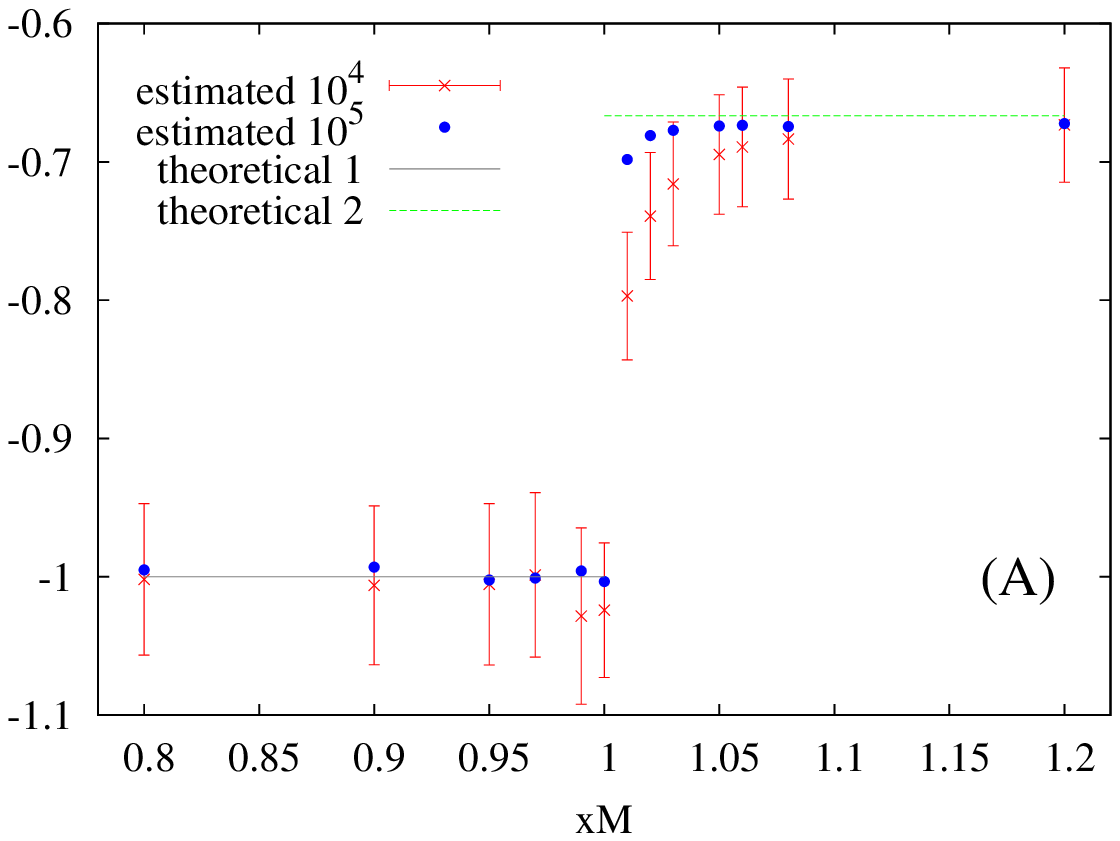}
  \psfrag{yM}{$y_M$}
  \includegraphics[width=0.495\textwidth]{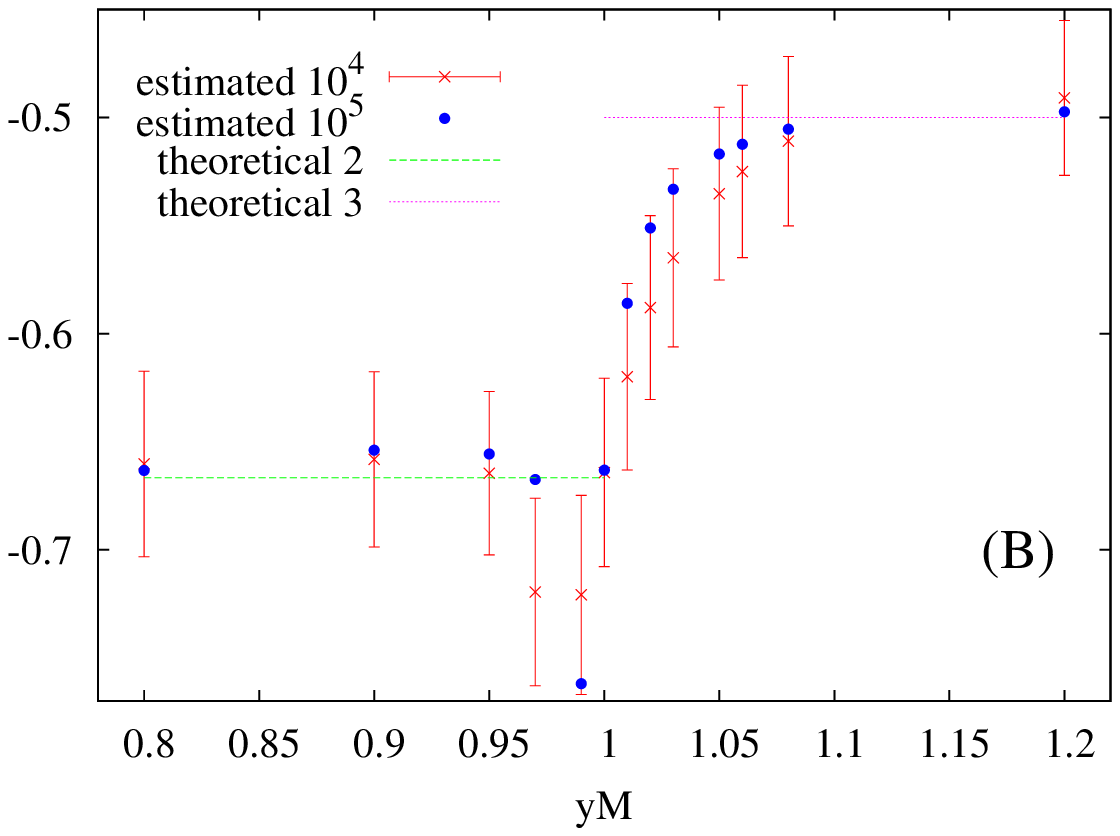}
  \caption{%
    Point estimates (crosses) and estimation uncertainty (vertical bars) of
    the tail index $\xi$ for Thom's map~\eqref{thom} under the
    observable~\eqref{obsalpha} with $\alpha=2$ and
    (A)~$y_M=0.5090001$ and $x_M$ varying, and (B)~$x_M=1.2$ and $y_M$ varying.
    Horizontal lines labelled by 1, 2, 3 represent theoretical values according
    to~\eqref{tail-thom-alphain},~\eqref{tail-thom-alphaout1}
    and~\eqref{tail-thom-alphaout2}, respectively.
    The method of L-moments was used as
    described in Appendix~\ref{app:numerical} with $N_{bmax}= 10000$ and
    $N_{samp}=100$ fixed, with $N_{blocklen}=10^4$ (red) and $N_{blocklen}
    =10^5$ (blue).
  }
  \label{fig:xis-thom-b}
\end{figure}

\begin{figure}[p]
  \centering
  \psfrag{xi}{$\xi$}
  \psfrag{Nlength}{$N_{blocklen}$}
  \includegraphics[width=0.495\textwidth]{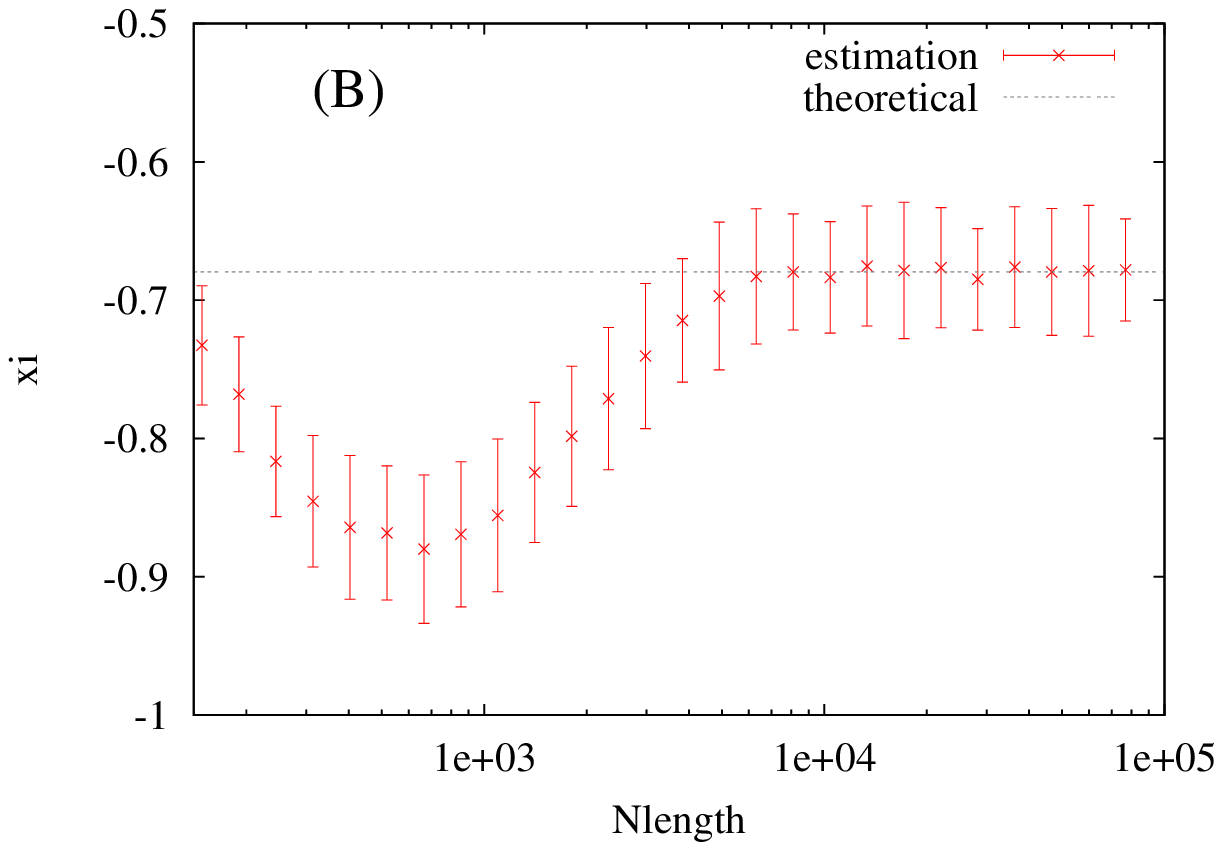}
  \includegraphics[width=0.495\textwidth]{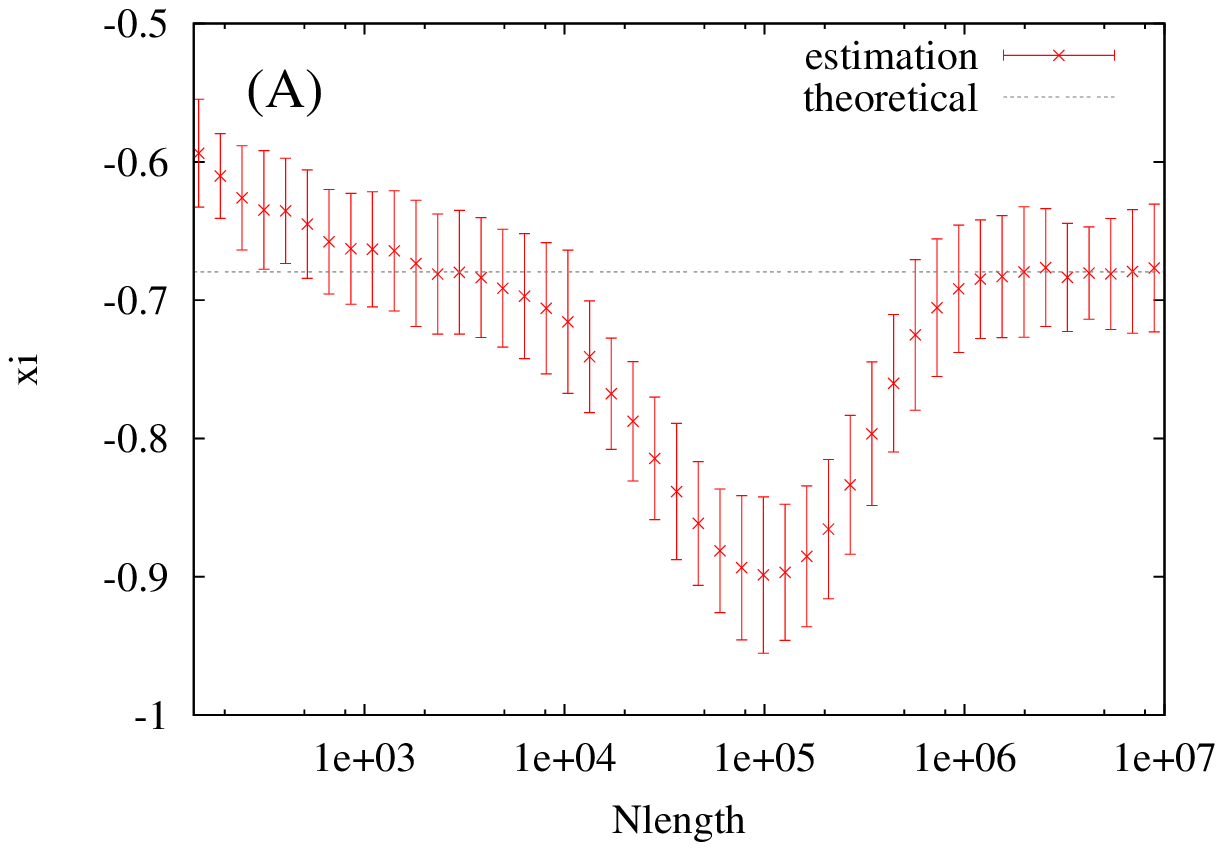}
  \caption{%
    Point estimates (crosses) and estimation uncertainty (vertical bars) of
    the tail index $\xi$ versus block length $N_{blocklen}$ for the Solenoid
    map~\eqref{solenoid} under the observable~\eqref{obsthetaxy} (left)
    and~\eqref{obsthetaxz} (right) with $\theta=0.5$.
    The horizontal dashed lines represent theoretically
    expected values according to~\eqref{tail-solenoid-abcout}
    Estimates are obtained by the method of L-moments as for
    \figref{xis-thom-p}, with
    $N_{bmax}= 10000$ and $N_{samp}=100$, see
    Appendix~\ref{app:numerical}.
  }
  \label{fig:blocklens-soli-theta}
\end{figure}

\begin{figure}[p]
  \centering
  \psfrag{xi}{$\xi$}
  \psfrag{theta}{$N_{blocklen}$}
  \includegraphics[width=0.32\textwidth]{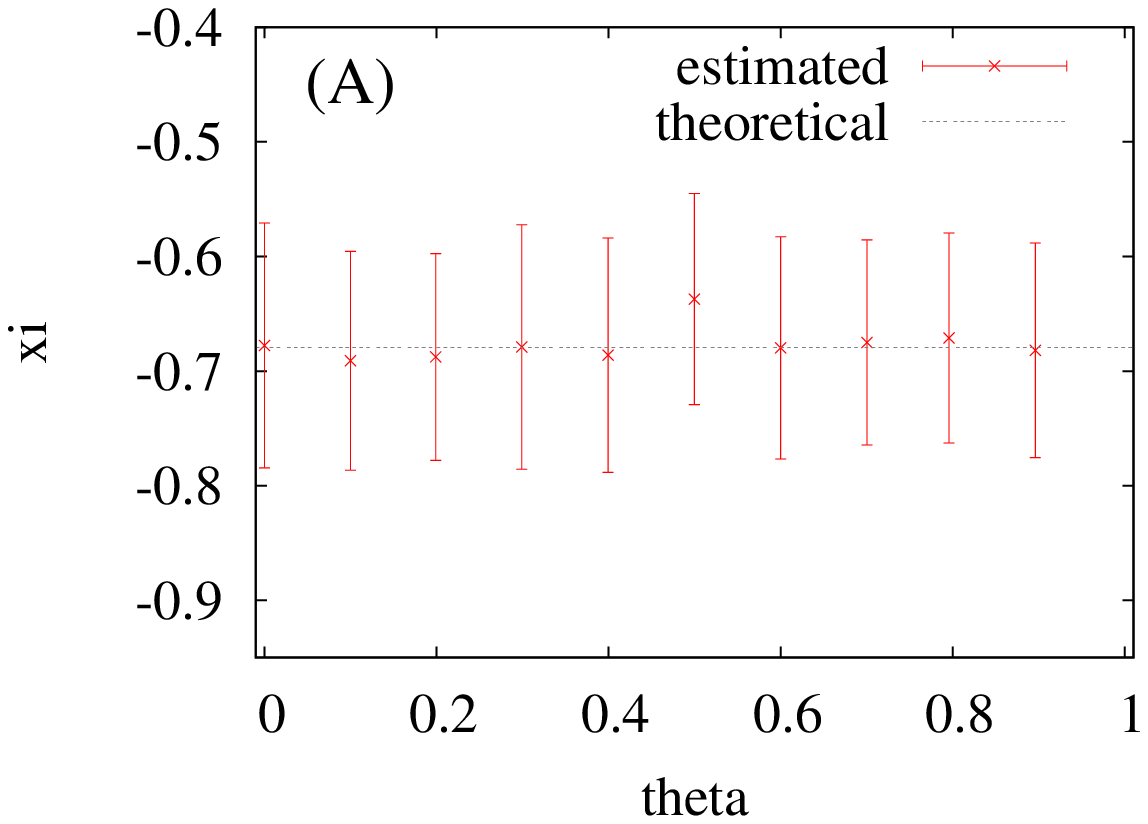}
  \includegraphics[width=0.32\textwidth]{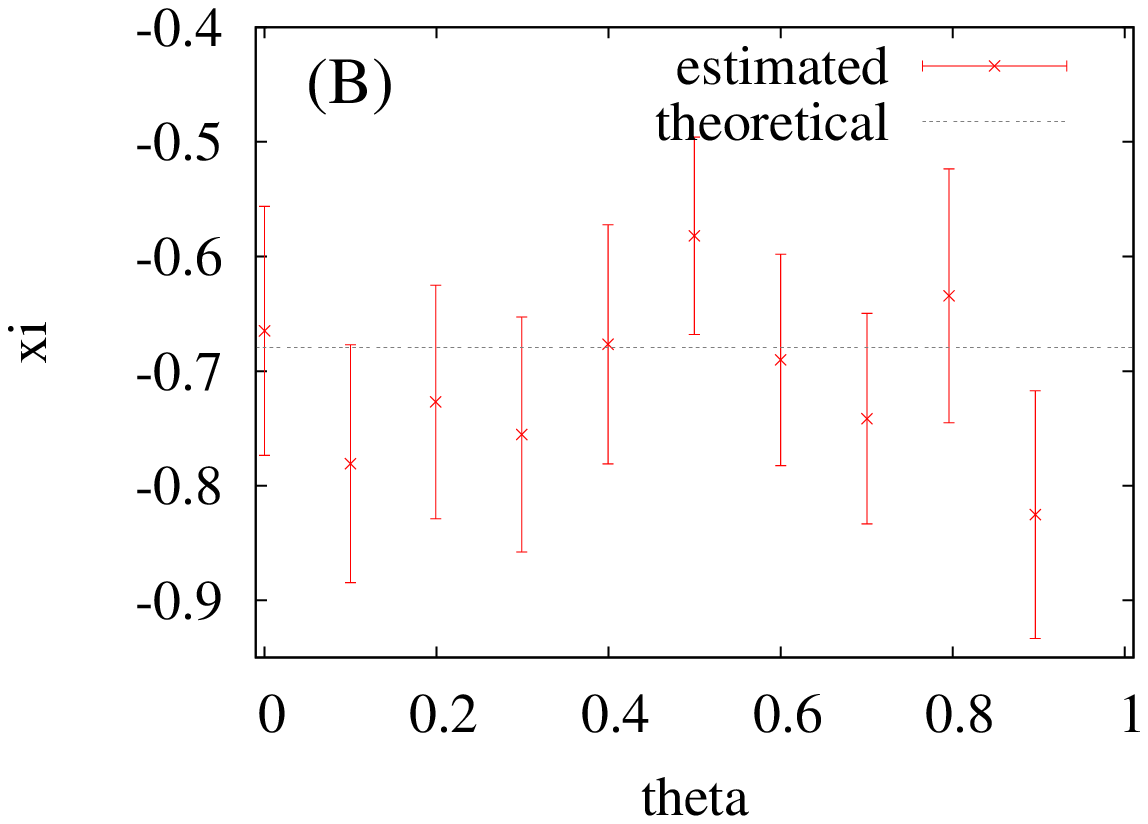}
  \includegraphics[width=0.32\textwidth]{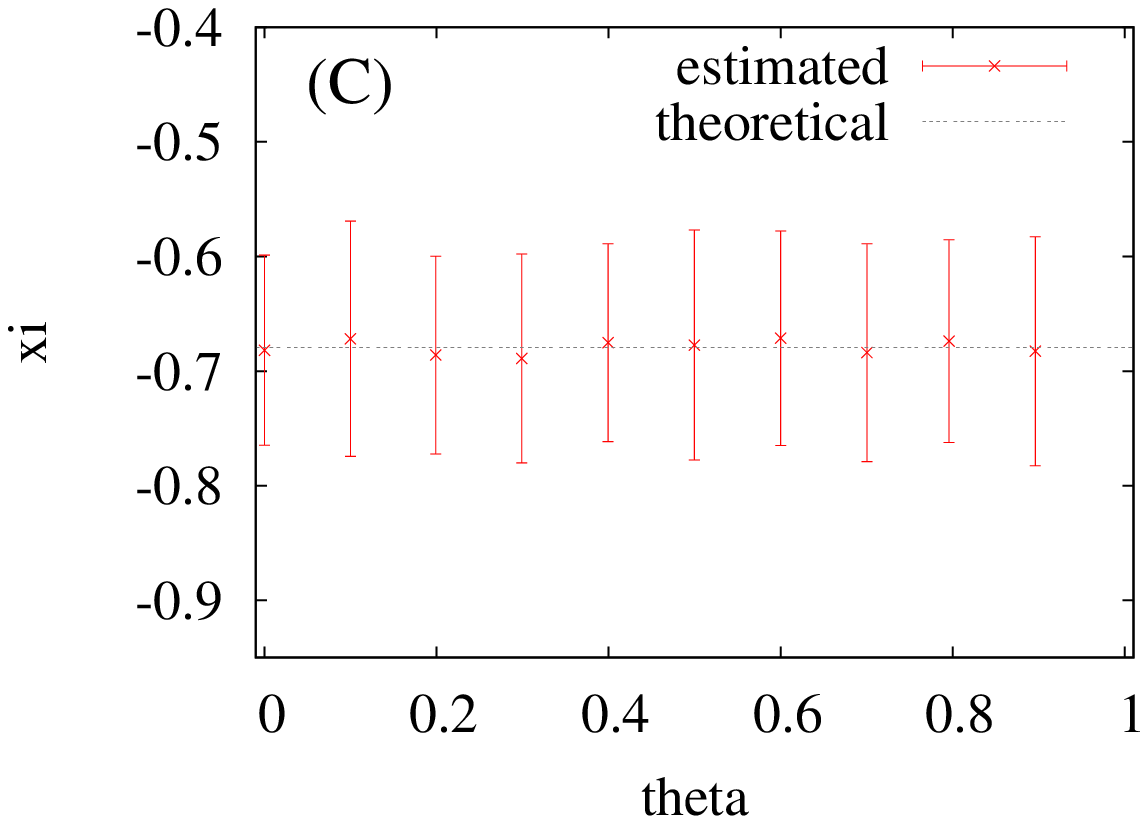}
  \caption{%
    Point estimates (crosses) and estimation uncertainty (vertical bars) of
    the tail index $\xi$ versus parameter $\theta$ for the solenoid
    map~\eqref{solenoid} under the observable~\eqref{obsthetaxy}~(A)
    and~\eqref{obsthetaxz} (B,C) with $x_0=y_0=z_0=3$, where
    $\theta=\frac{i}{10}$ for $i= 0, \dots, 9$.
    The dashed line represents theoretically expected
    values according to~\eqref{tail-solenoid-abcout}.  Estimates are obtained
    by the method of L-moments as for \figref{xis-thom-p}, with $N_{bmax}=
    10000$ and $N_{samp}=100$, see Appendix~\ref{app:numerical}, where
    $N_{blocklen}=10000$ (A,C) and $N_{blocklen}=10^6$ (B).
  }
  \label{fig:xis-solenoid-theta1}
\end{figure}

\begin{figure}[p]
  \centering
  \psfrag{xi}{$\xi$}
  \psfrag{lambda}{$\lambda$}
  \includegraphics[width=0.495\textwidth]{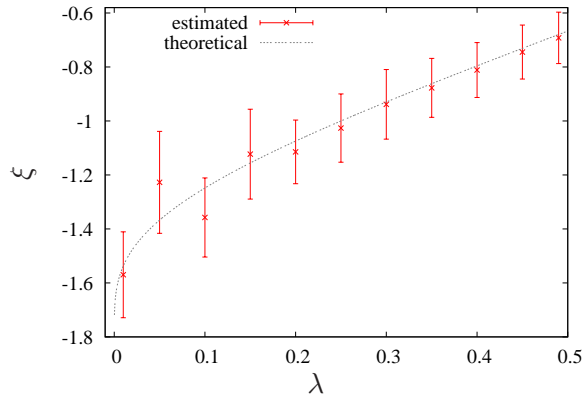}
  \caption{%
    Point estimates (crosses) and estimation uncertainty (vertical bars) of
    the tail index $\xi$ versus parameter $\lambda$ for the solenoid
    map~\eqref{solenoid} under the observable~\eqref{obsthetaxy} with
    $\theta=x_0=y_0=0$. The dashed line represents theoretically expected
    values according to~\eqref{tail-solenoid-abcout}.
    Estimates are obtained  by the method of L-moments as for
    \figref{xis-thom-p}, with $N_{bmax}=50000$, $N_{samp}=100$ and  $N_{blocklen}=10000$,
    see Appendix~\ref{app:numerical}.
  }
  \label{fig:xis-solenoid-lambda}
\end{figure}

\begin{figure}[p]
  \centering
  \psfrag{xi}{$\xi$}
  \psfrag{Nlength}{$N_{blocklen}$}
  \includegraphics[width=0.47\textwidth]{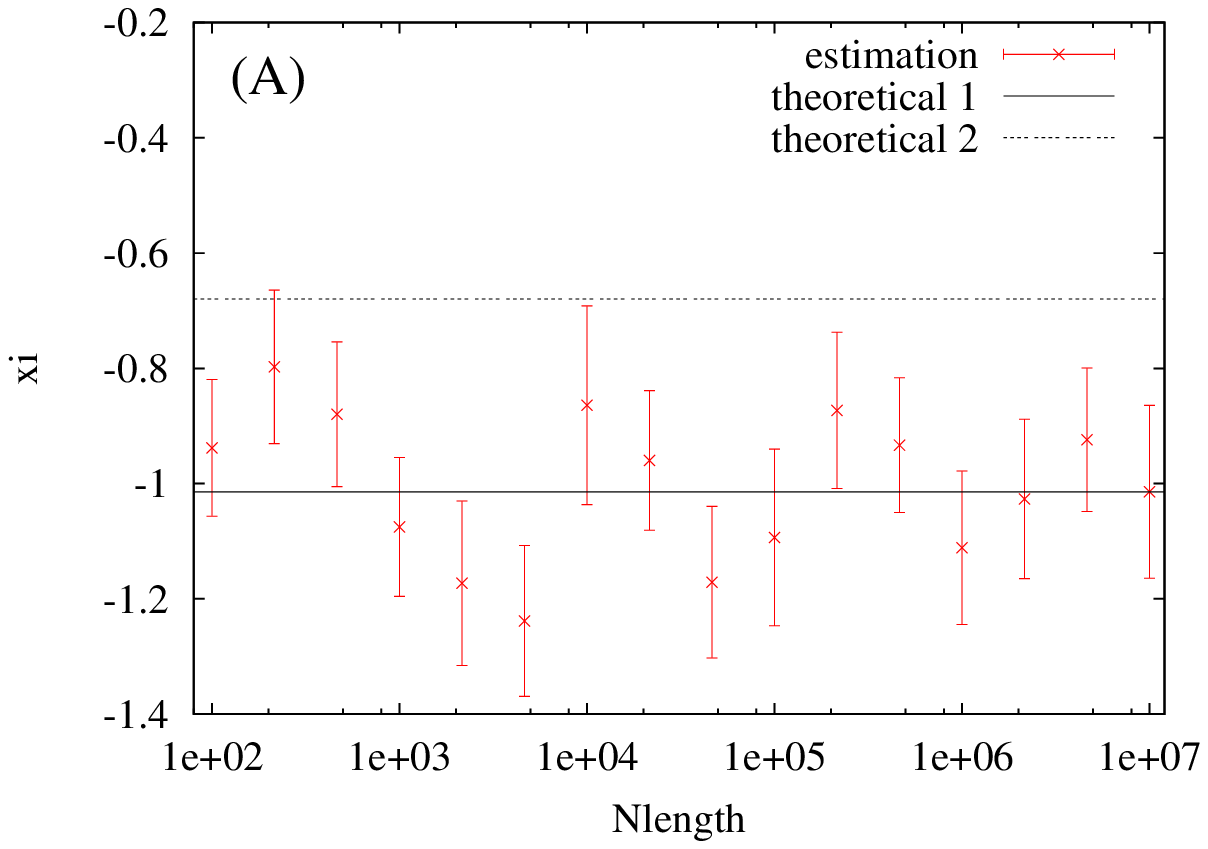}
  \includegraphics[width=0.47\textwidth]{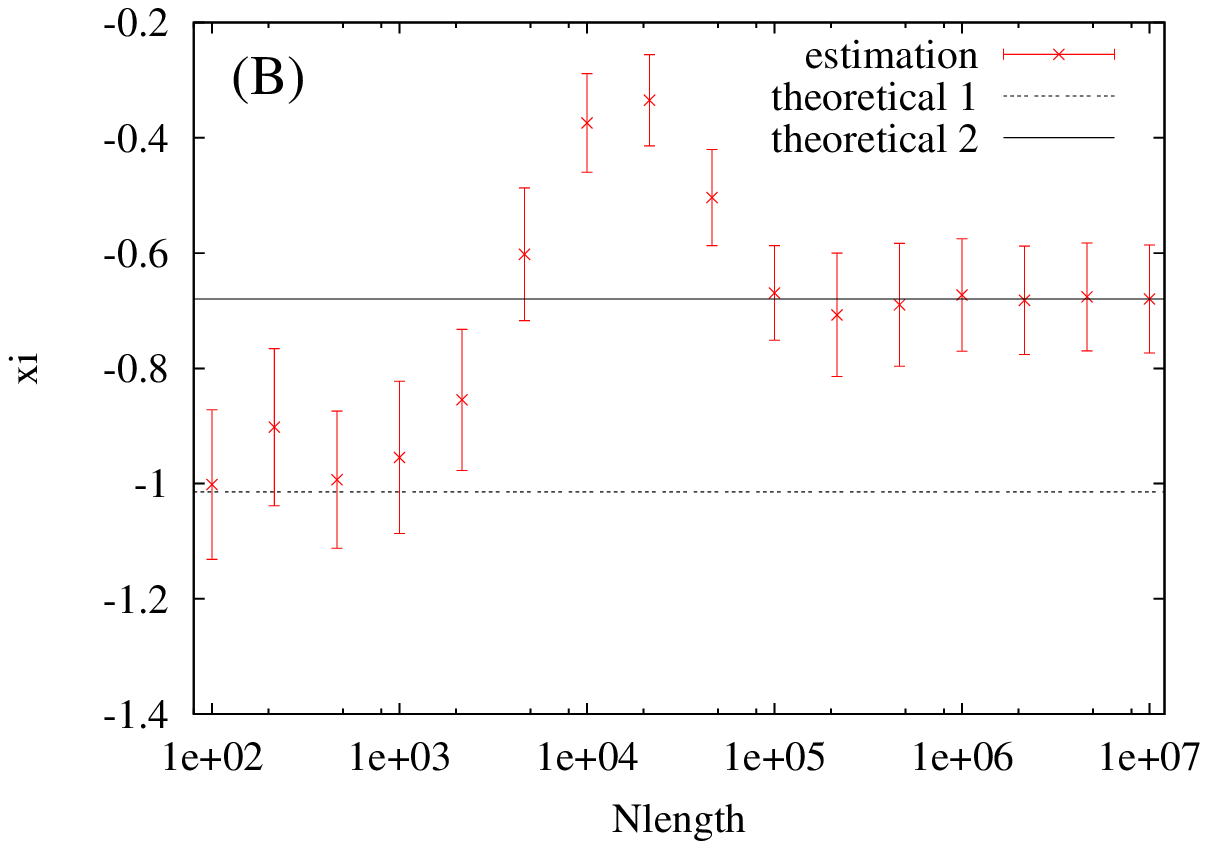}
  \includegraphics[width=0.47\textwidth]{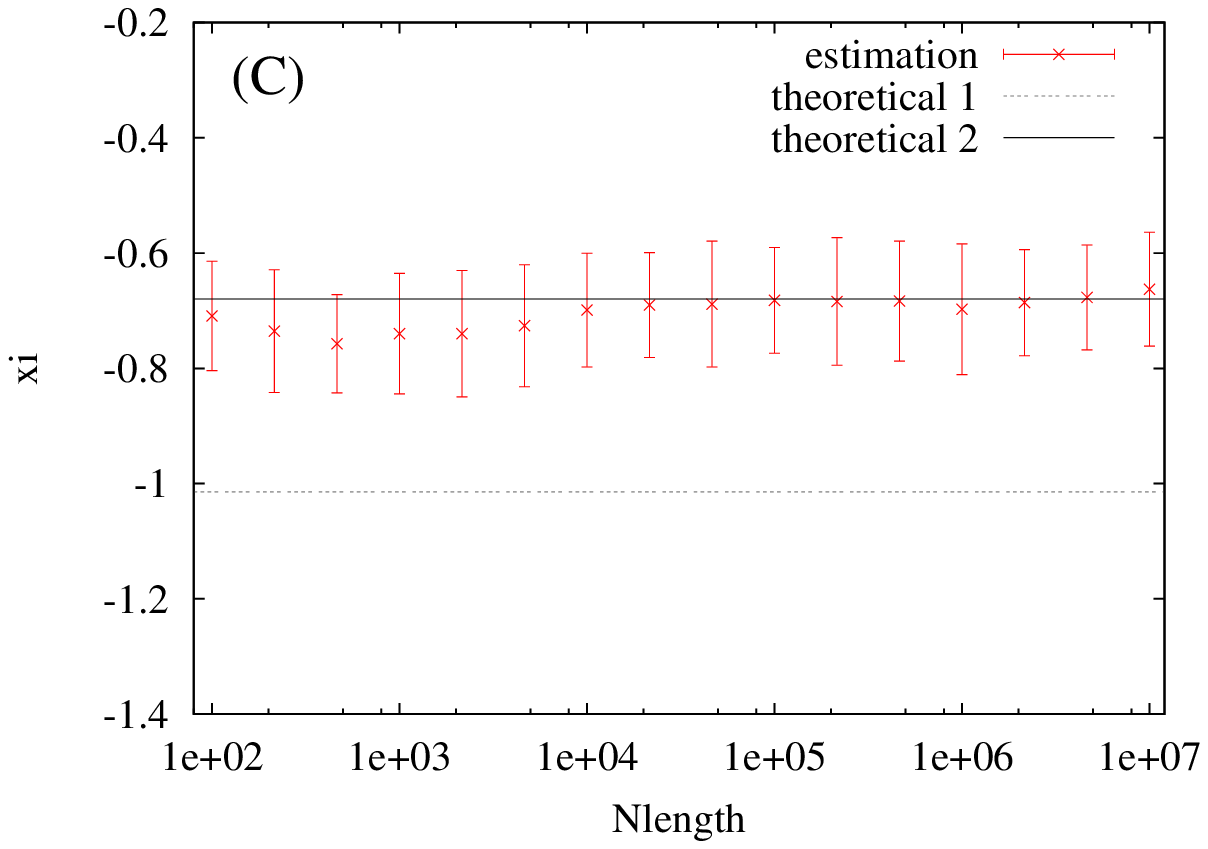}
  \caption{%
    Point estimates (crosses) and estimation uncertainty (vertical bars) of
    the tail index $\xi$ versus block length $N_{blocklen}$ for
    the solenoid map~\eqref{solenoid} under the
    observable~\eqref{obsalpha3} with $\alpha=0.3$.
    (A) $p_M$ is chosen as a point $p_M^0\in\Lambda$ as described
    in the text. (B) $p_M=p_M^t=(1+t)p_M^0$ with $t=0.1$.
    (C) $p_M=p_M^t$ with $t=1$.
    The dashed lines represent theoretically
    expected values according to~\eqref{tail-thom-alphain}
    Estimates are  obtained by the method of L-moments as for
    \figref{xis-thom-p},
    with $N_{bmax}= 10000$ and $N_{samp}=100$, see
    Appendix~\ref{app:numerical}.
  }
  \label{fig:solenoid-blocklens-alpha}
\end{figure}

\begin{figure}[p]
  \centering
  \psfrag{xi}{$\xi$}
  \psfrag{Nlength}{$N_{blocklen}$}
  \includegraphics[width=0.495\textwidth]{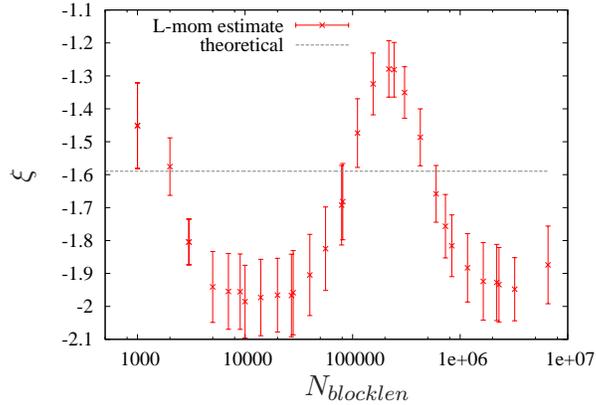}
  \caption{%
    Point estimates (crosses) and estimation uncertainty (vertical bars)
    of the tail index $\xi$ versus block length $N_{blocklen}$ for the
    H\'enon map~\eqref{henon} under the observable~\eqref{obsalpha4}
    with $\alpha=2$.
    The horizontal dashed line represents theoretically
    expected values according to~\eqref{tail-henon-alphain},
    with the Lyapunov dimension replacing the Hausdorff dimension, see text.
    Estimates are obtained by the method of L-moments as for
    \figref{xis-thom-p}, with $N_{bmax}=50000$ and $N_{samp}=100$, see
    Appendix~\ref{app:numerical}.
  }
  \label{fig:henon-blocklens-alpha}
\end{figure}

\begin{figure}[p]
  \centering
  \psfrag{xi}{$\xi$}
  \psfrag{Nlength}{$N_{blocklen}$}
  \includegraphics[width=0.495\textwidth]{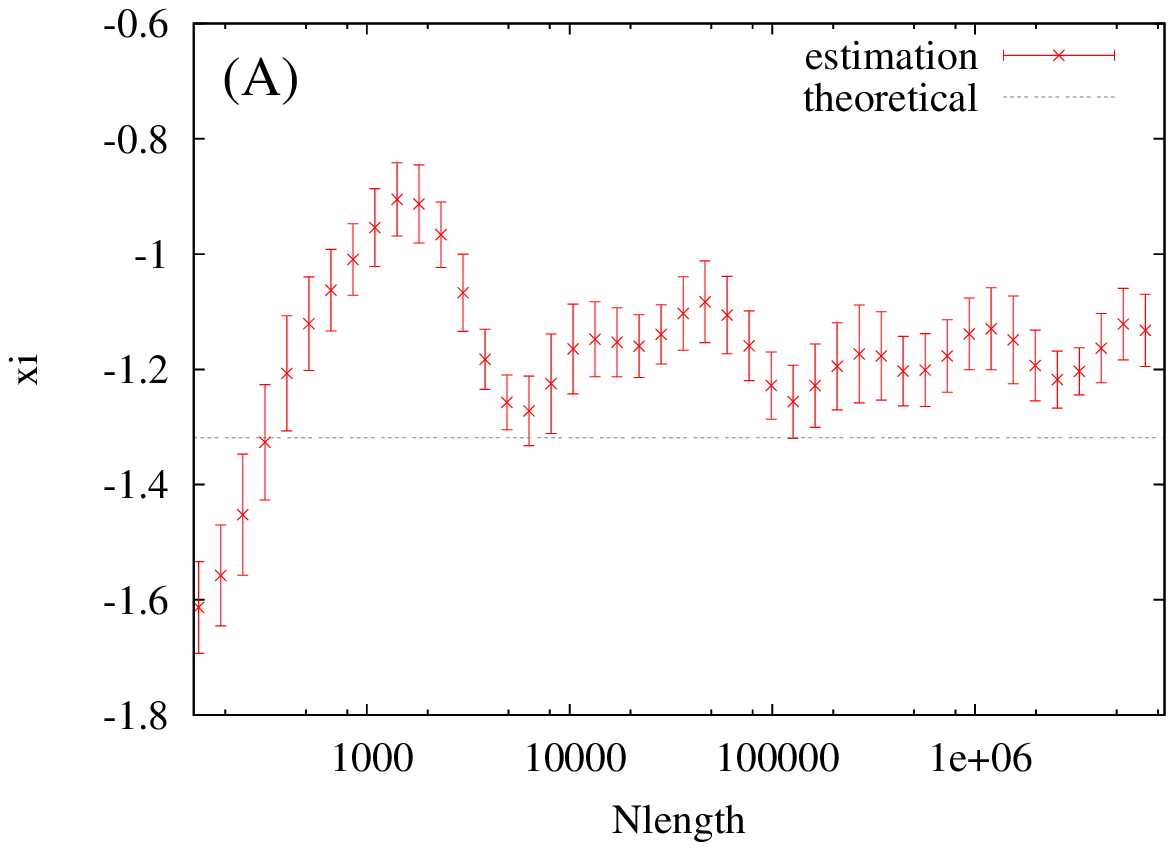}
  \includegraphics[width=0.495\textwidth]{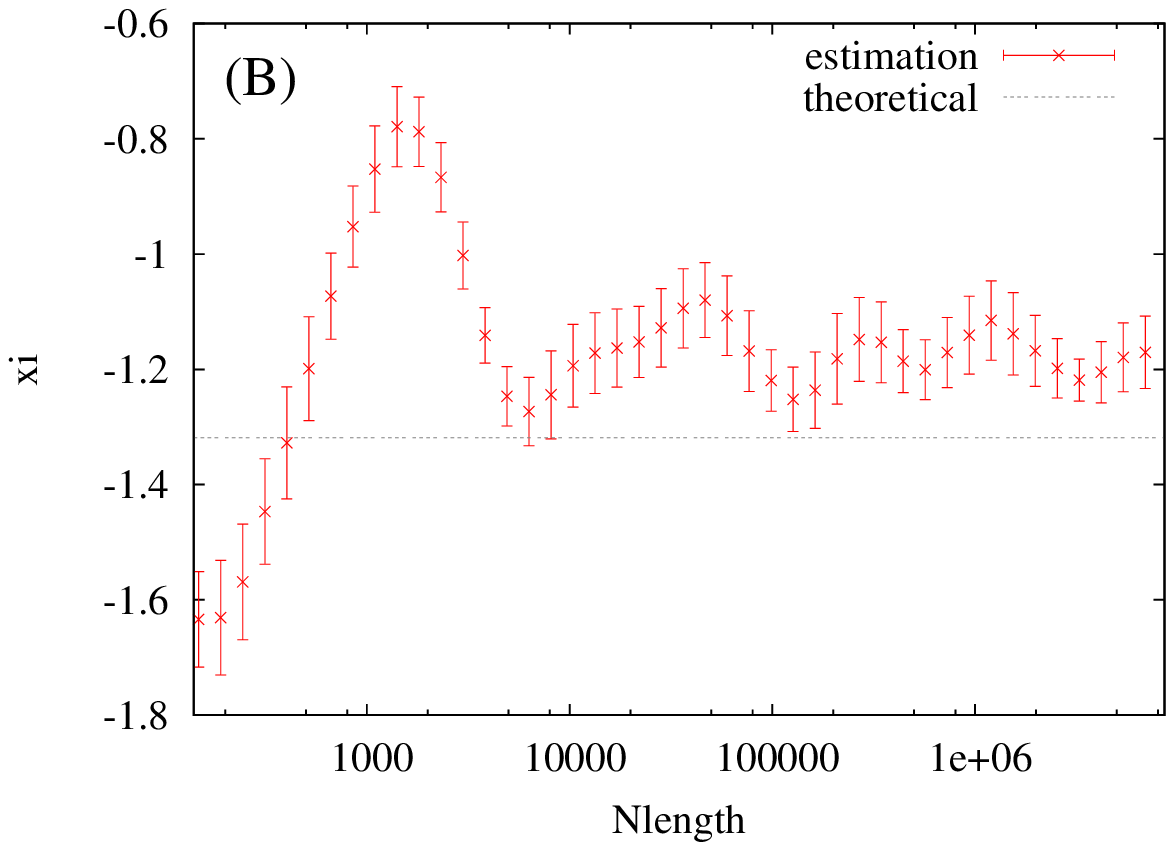}
  \caption{%
    Point estimates (crosses) and estimation uncertainty (vertical bars) of
    the tail index $\xi$ versus block length $N_{blocklen}$ for 
    the H\'enon map~\eqref{henon} under the observable~\eqref{obstheta} with
    $\theta=0$ (left) and $\theta=0.5$ (right).
    The horizontal dashed lines represent theoretically
    expected values according to~\eqref{tail-henon-thetaout}.
    Estimates are
    obtained by the method of L-moments as for \figref{xis-thom-p}, with
    $N_{bmax}= 10000$, and $N_{samp}=100$, see
    Appendix~\ref{app:numerical}.
    }
  \label{fig:henon-blocklens-theta}
\end{figure}

\begin{figure}[p]
  \centering
  \psfrag{xi}{}
  \psfrag{theta}{}
  \includegraphics[width=0.8\textwidth]{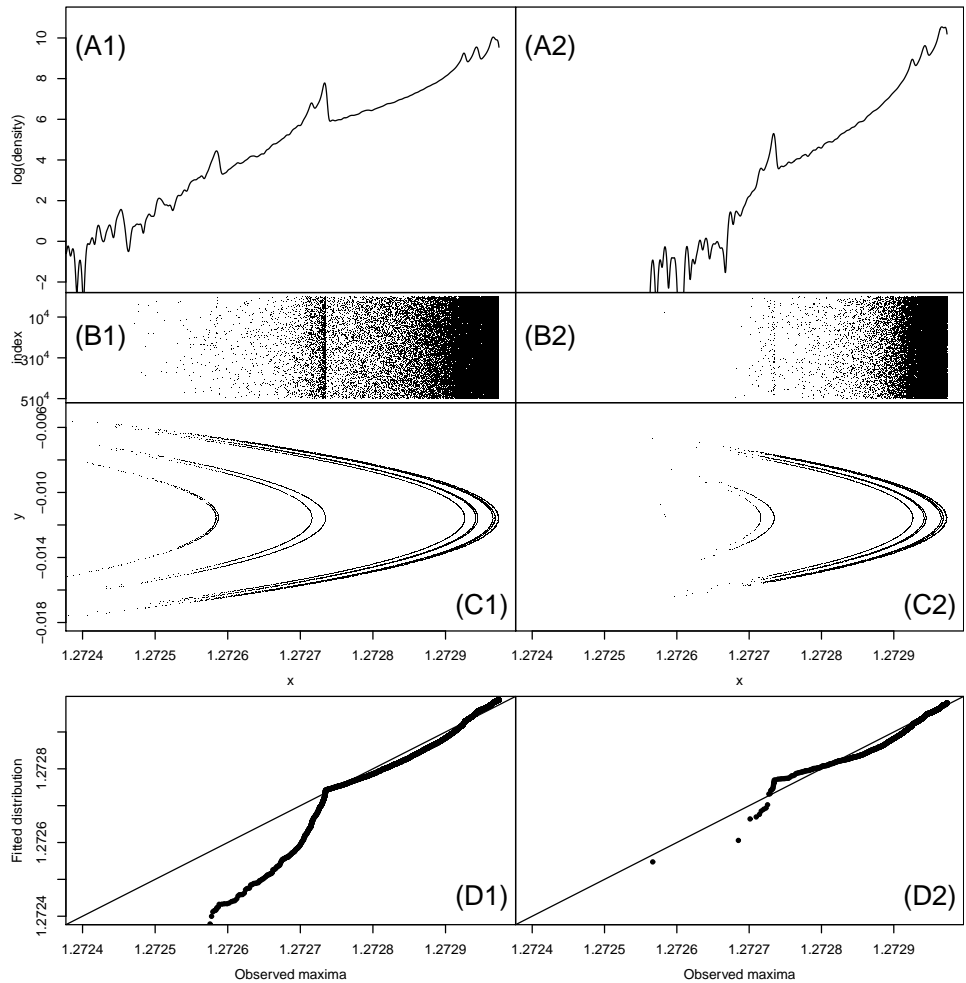}
  \caption{%
    Diagnostics of the GEV distribution fit for the H\'enon map~\eqref{henon}
    under observable~\eqref{obstheta} with $\theta=0$, with
    $N_{bmax}=5\cdot10^5$ block maxima computed over blocks of length
    $N_{blocklen}=5\cdot10^4$ (left column, A1-D1) and
    $N_{blocklen}=1.2\cdot10^5$ (right column A2-D2).
    (A1,2) Non-parametric log-densities of the block maxima, obtained by
    Gaussian kernel smoothing with bandwidth $0.000002$.
    (B1,2) Time series of the $5\cdot10^4$ block maxima (with
    the block sequential index on the vertical axis).
    (C1,2) Points on the H\'enon attractor corresponding to the block
    maxima used in (A1) and (A2), respectively.
    (D1,2) Quantile-quantile plot of the empirical distribution of
    the block maxima (horizontally) versus the fitted GEV distribution.
    }
  \label{fig:attractor}
\end{figure}
\begin{figure}[p]
  \centering
  \psfrag{xi}{$\xi$}
  \psfrag{theta}{$\theta$}
  \includegraphics[width=0.495\textwidth]{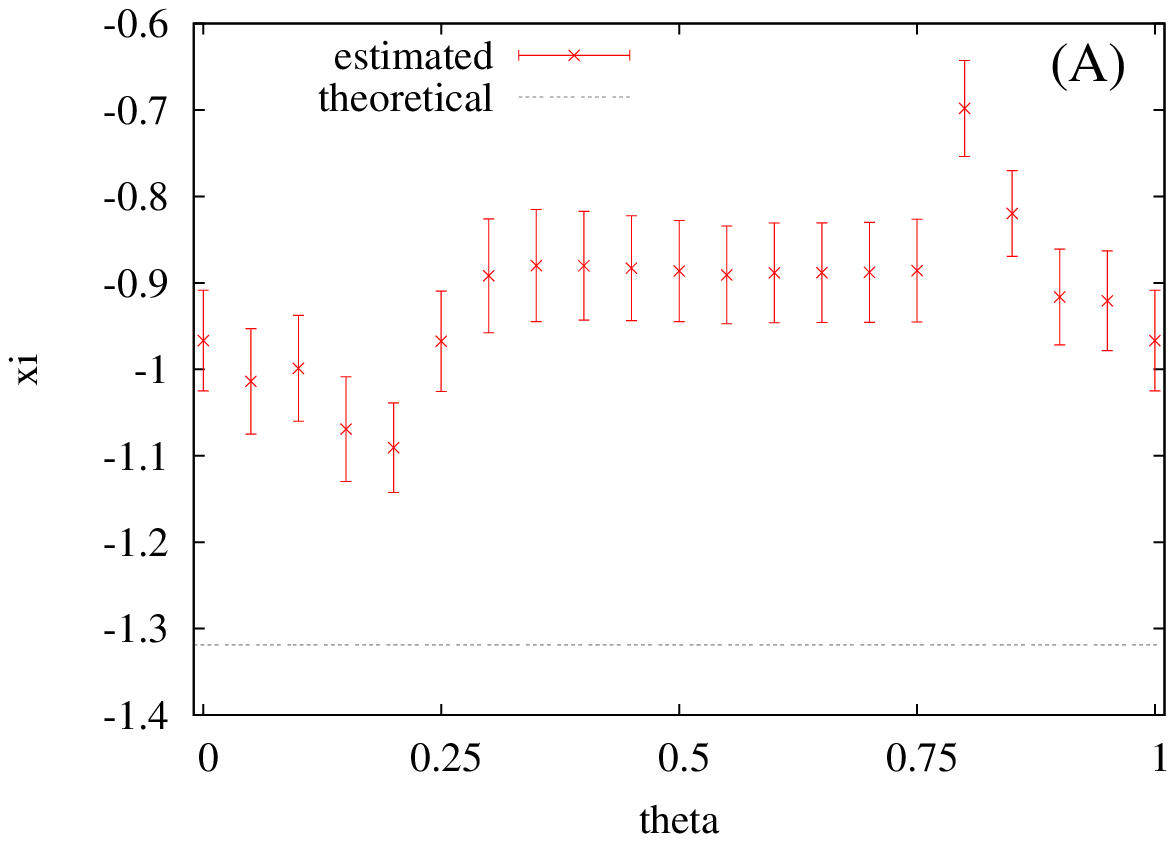}
  \includegraphics[width=0.495\textwidth]{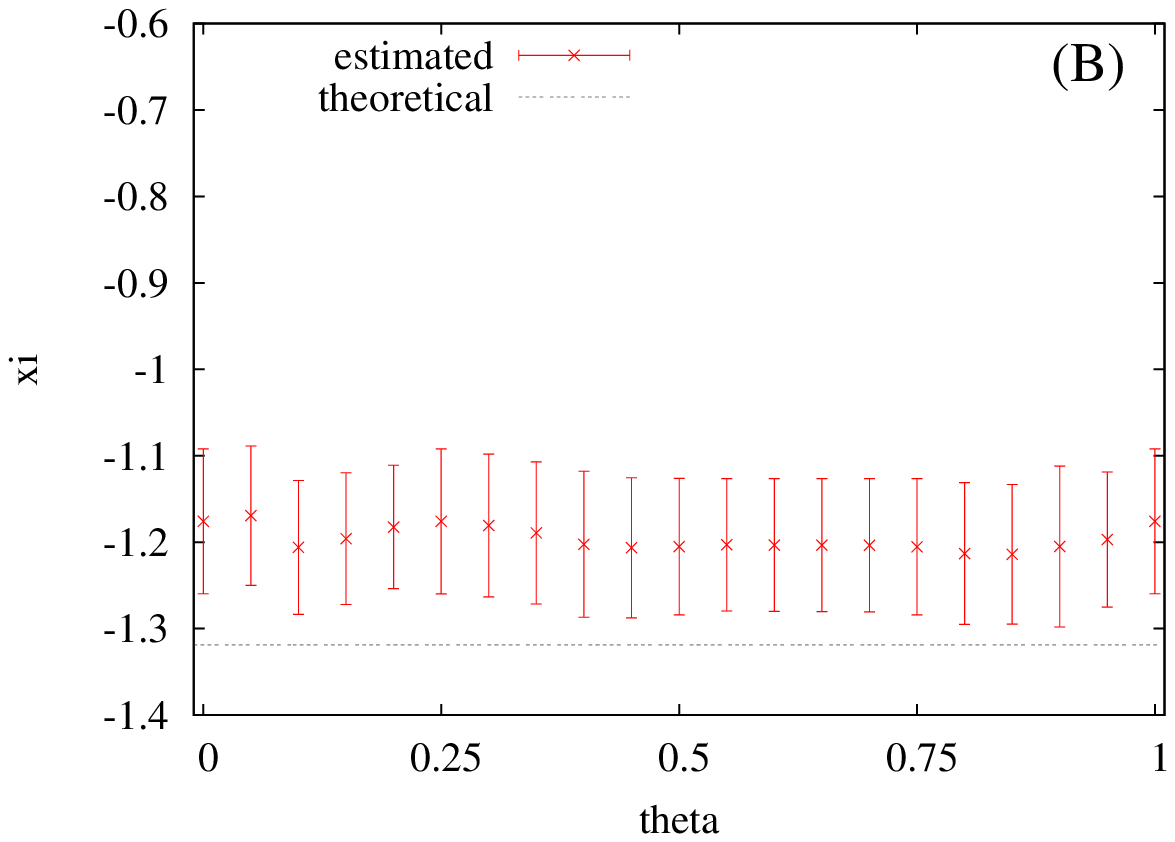}
  \includegraphics[width=0.495\textwidth]{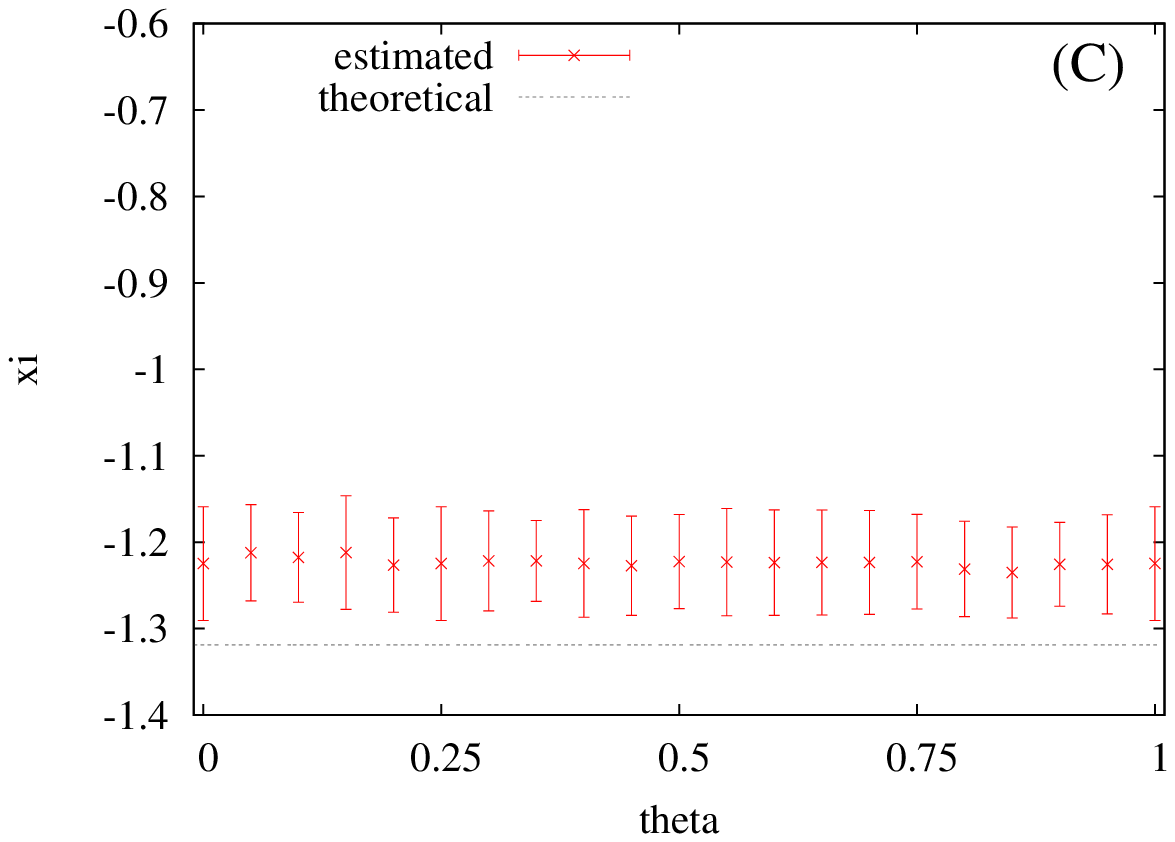}
  \includegraphics[width=0.495\textwidth]{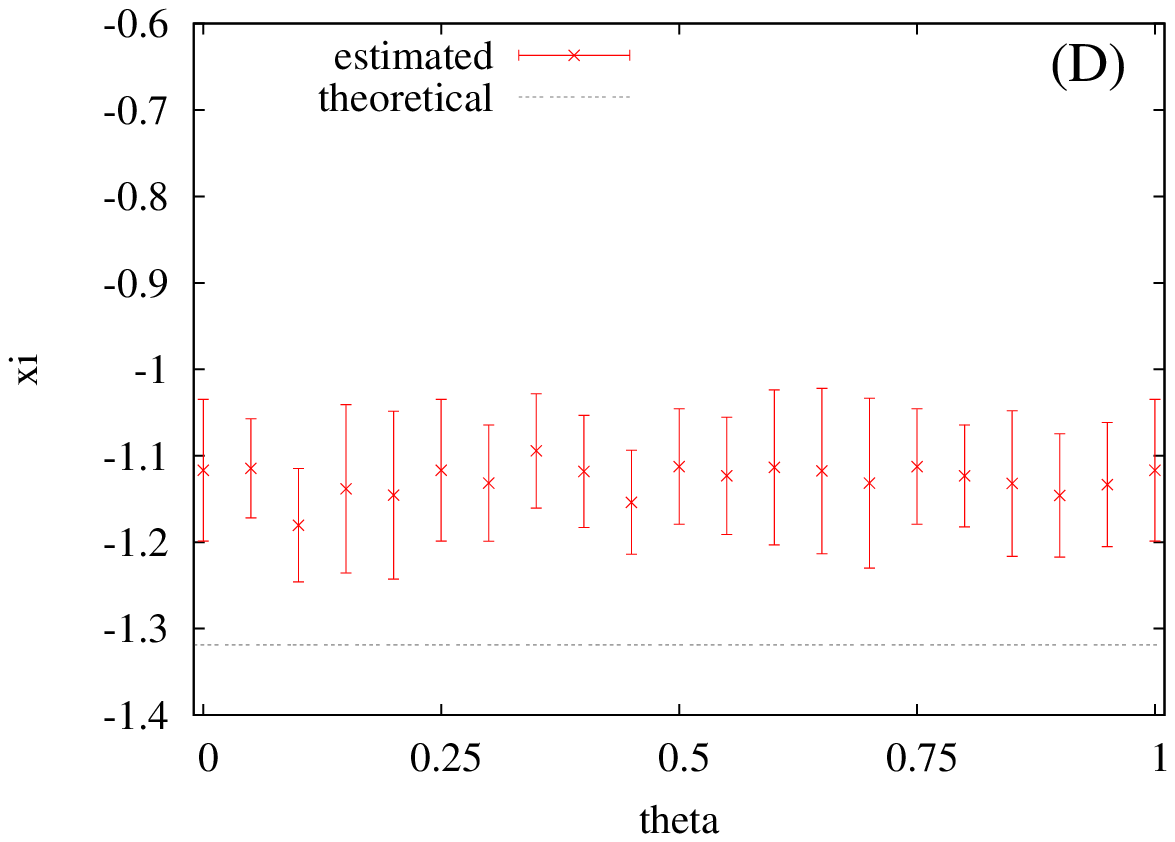}
  \caption{%
    Point estimates (crosses) and estimation uncertainty (vertical bars) of
    the tail index $\xi$ versus $\theta$ for 
    the H\'enon map~\eqref{henon} under the observable~\eqref{obstheta}
    for block lengths of (A) $N_{blocklen}=10^3$, (B) $N_{blocklen}=10^4$, 
    (C) $N_{blocklen}=10^5$, (D) $N_{blocklen}=10^6$.
    The horizontal dashed lines represent theoretically
    expected values according to~\eqref{tail-henon-thetaout}.
    Estimates are
    obtained by the method of L-moments as for \figref{xis-thom-p}, with
    $N_{bmax}= 10^5$, and $N_{samp}=100$, see
    Appendix~\ref{app:numerical}.
  }
  \label{fig:xis-henon-theta}
\end{figure}

\begin{figure}[p]
  \centering
  \psfrag{xi}{$\xi$}
  \psfrag{Nlength}{$N_{blocklen}$}
  \includegraphics[width=0.495\textwidth]{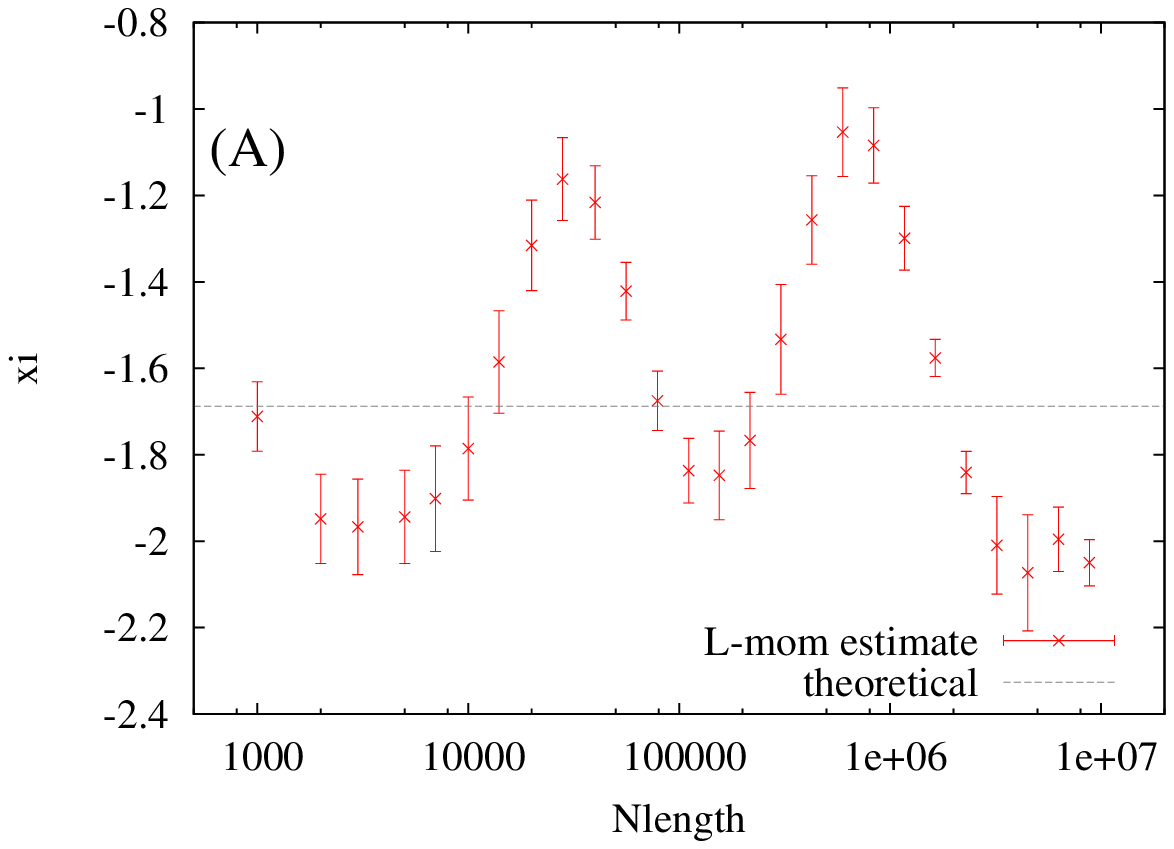}
  \includegraphics[width=0.495\textwidth]{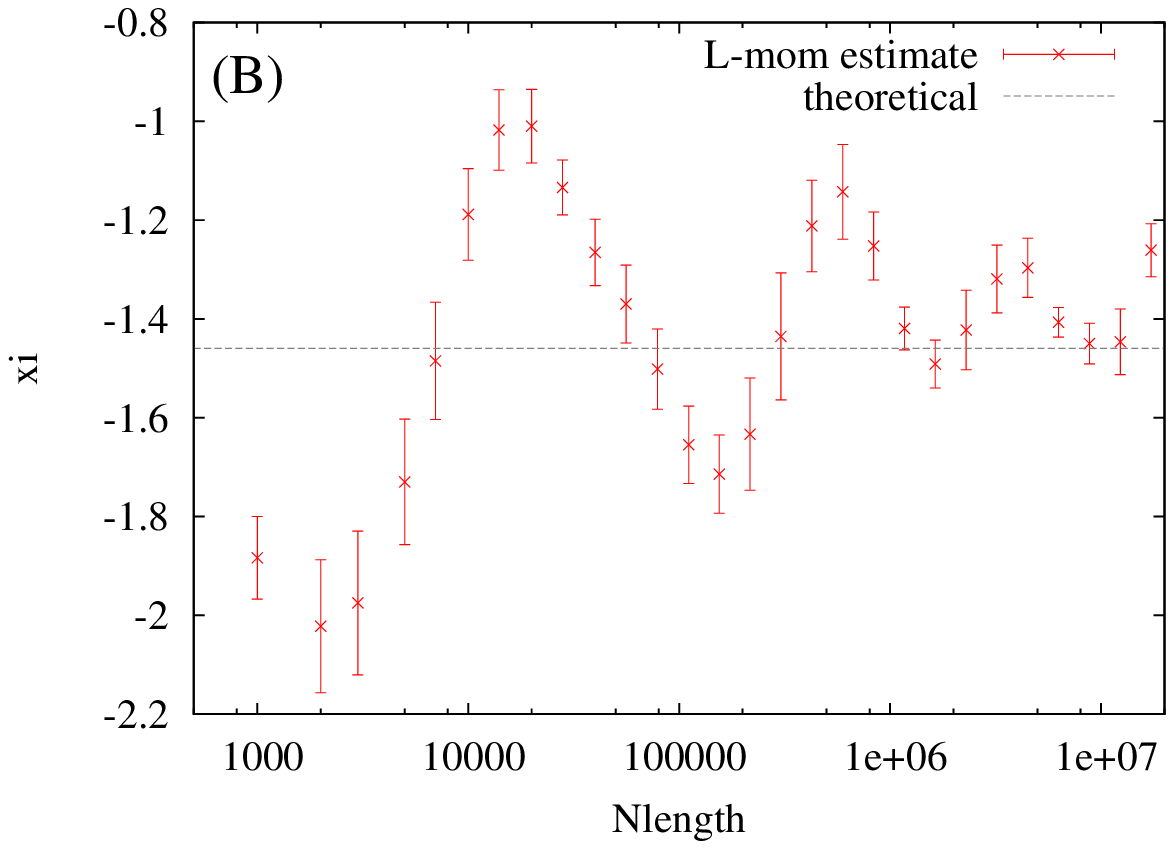}
  \caption{%
    Point estimates (crosses) and estimation uncertainty (vertical bars)
    of the tail index $\xi$ versus block length $N_{blocklen}$ for the
    Lozi map~\eqref{henon} under the observable~\eqref{obsalpha4}
    with $\alpha=2$ and (A) for a point $p_M$ belonging to the attractor;
    (B) for $p_M=(0.2,0.01)$ 
    The horizontal dashed line represents theoretically
    expected values according to~\eqref{tail-henon-alphain},
    with the Lyapunov dimension replacing the Hausdorff dimension, see text.
    Estimates are obtained by the method of L-moments as for
    \figref{xis-thom-p}, with $N_{bmax}=50000$ and $N_{samp}=100$, see
    Appendix~\ref{app:numerical}.
  }
  \label{fig:lozi-blocklens-alpha}
\end{figure}

\begin{figure}[p]
  \centering 
  \psfrag{xi}{$\xi$}
  \psfrag{Nlength}{$N_{blocklen}$}
  \includegraphics[width=0.47\textwidth]{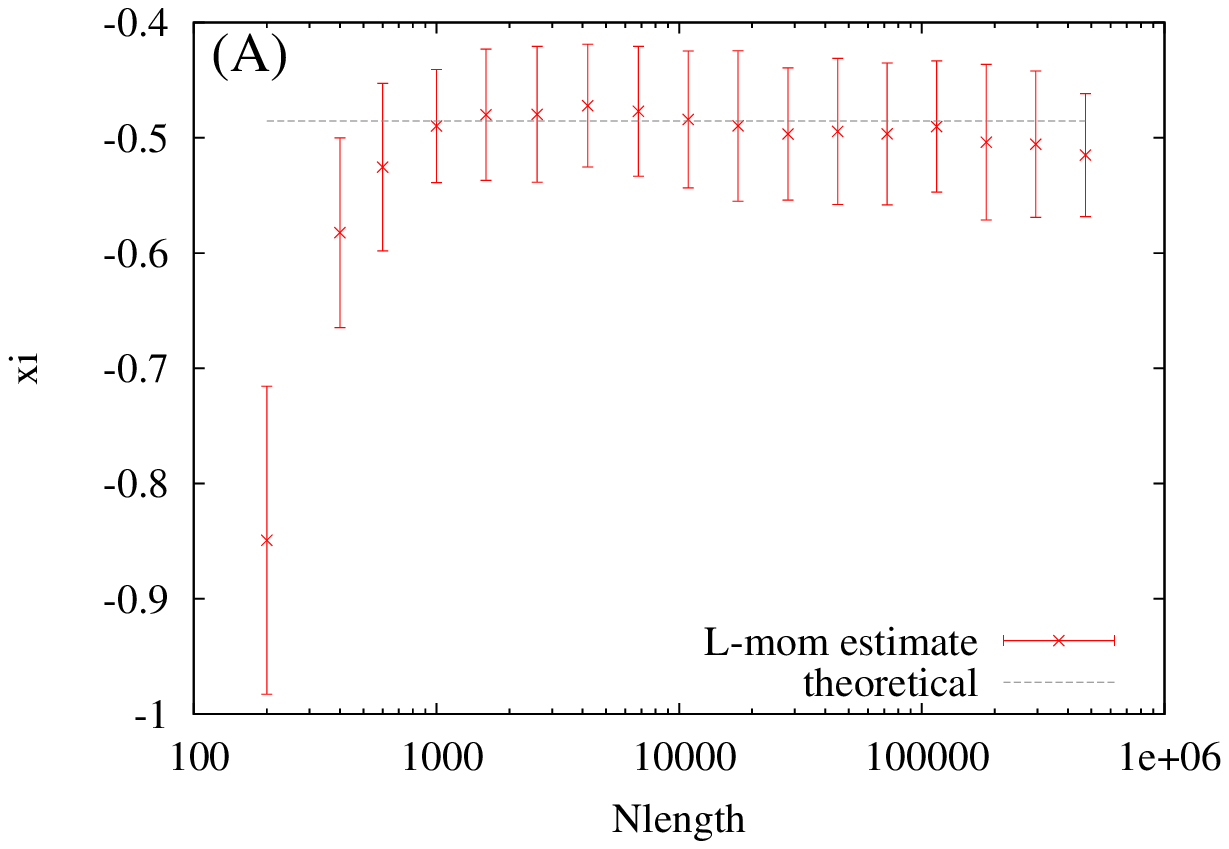}
  \includegraphics[width=0.47\textwidth]{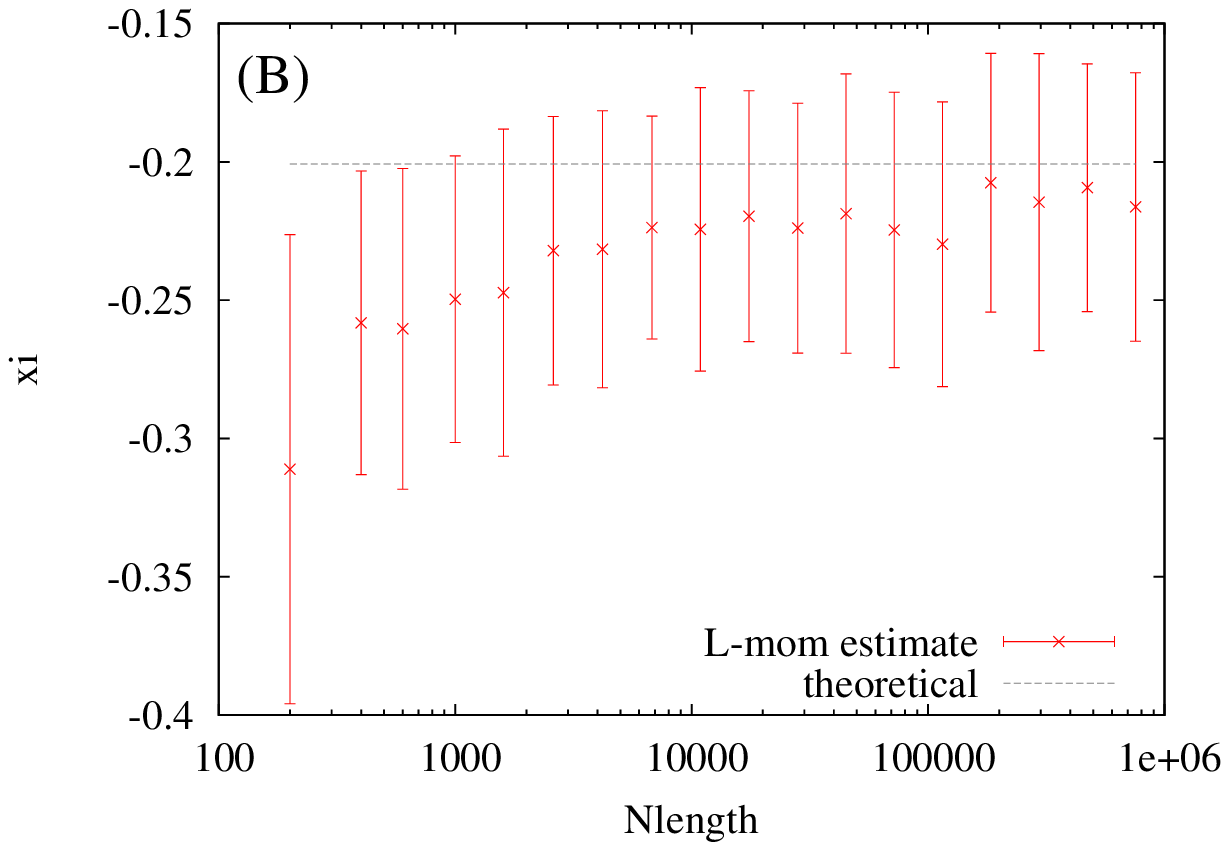}
  \caption{%
    Point estimates (crosses) and estimation uncertainty (vertical bars) of
    the tail index $\xi$ versus block length $N_{blocklen}$
    for he Lorenz63 flow~\eqref{lorenz63}
    (A) under the observable~\eqref{obsroot} where $p_M$ is chosen
    as the final point of an orbit of length $10^3$ time units starting
    from an arbitrary point, and (B) under observable~\eqref{obsflat}.
    Horizontal lines labelled by represent theoretical values.
    Estimates are obtained by the method of L-moments as for
    \figref{xis-thom-p}, with
    $N_{bmax}= 20000$ and $N_{samp}=100$, see
    Appendix~\ref{app:numerical}.
  }
  \label{fig:63-blocklens}
\end{figure}

\begin{figure}[p]
  \centering 
  \psfrag{xi}{$\xi$}
  \psfrag{Nlength}{$N_{blocklen}$}
  \includegraphics[width=0.47\textwidth]{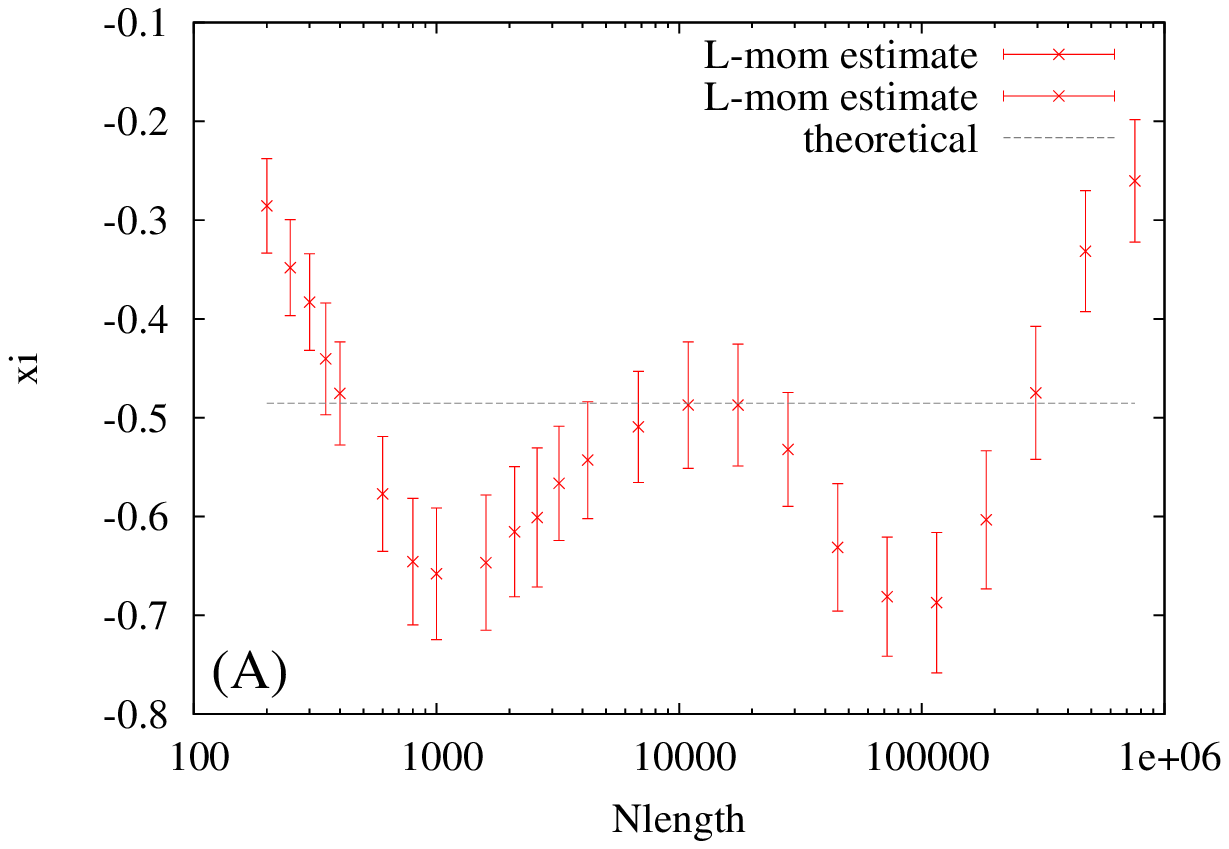}
  \includegraphics[width=0.47\textwidth]{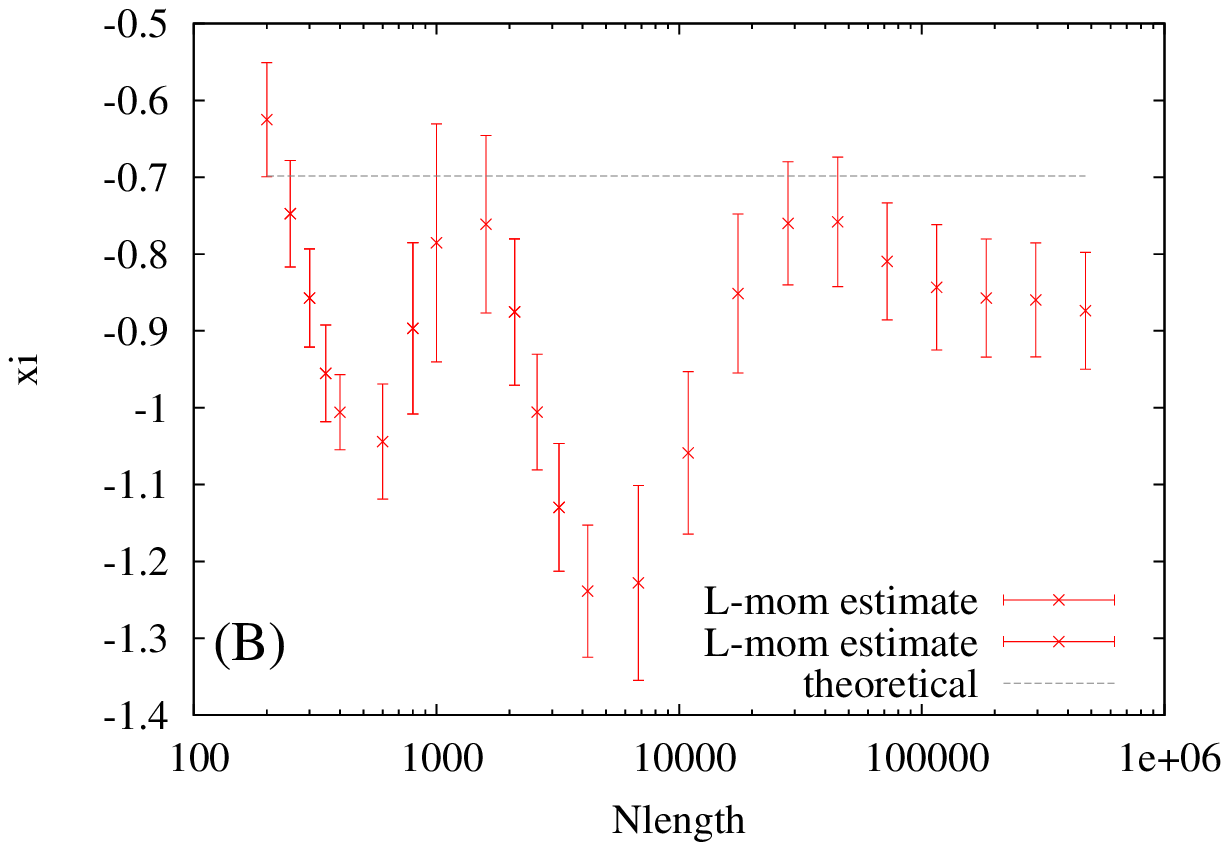}
  \caption{%
    Point estimates (crosses) and estimation uncertainty (vertical bars) of
    the tail index $\xi$ versus block length $N_{blocklen}$
    for he Lorenz63 flow~\eqref{lorenz63}
    (A) under the observable~\eqref{obsroot} where $p_M$ is chosen
    as the final point of an orbit of length $10^3$ time units starting
    from an arbitrary point, and (B) under observable~\eqref{obsflat}.
    Horizontal lines labelled by represent theoretical values
    according to~\eqref{xis-lorenz84-root} (A) and~\eqref{xis-lorenz84-flat}
    (B).
    Estimates are obtained by the method of L-moments as for
    \figref{xis-thom-p}, with
    $N_{bmax}= 20000$ and $N_{samp}=100$, see
    Appendix~\ref{app:numerical}.
  }
  \label{fig:84-blocklens}
\end{figure}

\bibliographystyle{plain}

\end{document}